\documentclass[12pt]{amsart}

\usepackage[all,color]{xy}
\usepackage{pb-diagram}
\usepackage[mathscr]{eucal}
\usepackage{hyperref}
\hypersetup{
    colorlinks=true, 
    linktoc=all,     
    linkcolor=blue,  
}
\usepackage{enumitem,titletoc}

\usepackage{xcolor}

\usepackage[normalem]{ulem}

\DeclareMathAlphabet{\mathpzc}{OT1}{pzc}{m}{it}

\oddsidemargin 0.3 cm
\evensidemargin 0.3 cm
\addtolength{\textwidth}{ 3.3 cm}
\addtolength{\textheight}{2cm}
\addtolength{\topmargin}{-1.2cm}

\usepackage{amsfonts}
\usepackage{amsmath}
\usepackage{amssymb}


\begin{document}

\newcommand{\ssub}{\subset\joinrel\subset}

\renewcommand{\thefootnote}{\fnsymbol{footnote}}
\newcommand{\starttext}{ \setcounter{footnote}{0}
\renewcommand{\thefootnote}{\arabic{footnote}}}
\renewcommand{\theequation}{\thesection.\arabic{equation}}
\newcommand{\be}{\begin{equation}}
\newcommand{\bea}{\begin{eqnarray}}
\newcommand{\eea}{\end{eqnarray}} \newcommand{\ee}{\end{equation}}
\newcommand{\N}{{\cal N}} \newcommand{\<}{\langle}
\renewcommand{\>}{\rangle}
\def\ba{\begin{eqnarray}}
\def\ea{\end{eqnarray}}
\newcommand{\PSbox}[3]{\mbox{\rule{0in}{#3}\includegraphics{#1}
\hspace{#2}}}

\def\v{\vskip .1in}

\def\al{\alpha}
\def\b{\beta}
\def\c{\chi}
\def\d{\delta}
\def\e{\epsilon}
\def\g{\gamma}
\def\l{\lambda}
\def\m{\mu}
\def\n{\nu}
\def\o{\omega}
\def\f{\varphi}
\def\r{\rho}
\def\si{\sigma}
\def\t{\theta}
\def\z{\zeta}
\def\k{\kappa}

\def\G{\Gamma}
\def\D{\Delta}
\def\O{\Omega}
\def\T{\Theta}

\newcommand{\dbar}{\overline{\partial}}
\newcommand{\ddbar}{\sqrt{-1}\partial\dbar}

\def\cA{{\mathcal A}}
\def\cB{{\mathcal B}}
\def\cC{{\mathcal C}}
\def\cD{{\mathcal D}}
\def\cE{{\mathcal E}}
\def\cF{{\mathcal F}}
\def\cG{{\mathcal G}}
\def\cH{{\mathcal{H}}}
\def\cI{{\mathcal I}}
\def\cK{{\mathcal K}}
\def\cL{{\mathcal L}}
\def\cM{{\mathcal M}}
\def\cN{{\mathcal N}}
\def\cO{{\mathcal O}}
\def\cP{{\mathcal P}}
\def\cp{{\mathcal p}}
\def\cR{{\mathcal R}}
\def\cS{{\mathcal S}}
\def\cT{{\mathcal T}}
\def\cV{{\mathcal V}}
\def\cX{{\mathcal X}}
\def\cY{{\mathcal Y}}
\def\A{{\mathbb{A}}}
\def\N{\mathbb N}
\def\Z{{\mathbb Z}}
\def\Q{{\mathbb Q}}
\def\R{{\mathbb R}}
\def\C{{\mathbb C}}
\def\P{{\mathbb P}}
\def\K{{\rm K\"ahler }}

\def\KE{{\rm K\"ahler-Einstein }}
\def\KEE{{\rm K\"ahler-Einstein}}
\def\Rm{{\rm Rm}}
\def\Ric{{\rm Ric}}
\def\Hom{{\rm Hom}}
\def\mod{{\ \rm mod\ }}
\def\Aut{{\rm Aut}}
\def\End{{\rm End}}
\def\osc{{\rm osc\,}}
\def\vol{{\rm vol}}
\def\Vol{{\rm Vol}}
\def\reg{{\rm reg}}
\def\sing{{\rm sing}}
\def\KE{{\rm K\"ahler-Einstein\ }}
\def\PSH{{\rm PSH}}
\def\Ad{{\rm Ad}}
\def\Lie{{\rm Lie}}
\def\ti\tilde
\def\ann{{\rm ann\,}}
\def\u{\underline}

\def\pl{\partial}
\def\na{\nabla}
\def\i{\infty}
\def\I{\int}
\def\p{\prod}
\def\s{\sum}
\def\dd{{\bf d}}
\def\ddb{\partial\bar\partial}
\def\sub{\subseteq}
\def\ra{\rightarrow}
\def\hra{\hookrightarrow}
\def\Lra{\Longrightarrow}
\def\lra{\longrightarrow}
\def\LA{\langle}
\def\RA{\rangle}
\def\L{\Lambda}
\def\diam{{\rm diam}}
\def\Diff{{\rm Diff}}
\def\Mod{{\rm Mod}}
\def\Hilb{{\rm Hilb}}
\def\depth{{\rm depth}}
\def\Ass{{\rm Ass}}
\def\us{{\underline s}}
\def\Re{{\rm Re}}
\def\Im{{\rm Im}}
\def\tr{{\rm tr}}
\def\det{{\rm det}}
\def\half{ {1\over 2}}
\def\third{{1 \over 3}}
\def\ti{\tilde}
\def\un{\underline}
\def\Tr{{\rm Tr}}
\def\Ker{{\rm Ker}}
\def\spec{{\rm Spec}}
\def\supp{{\rm supp}}
\def\Id{{\rm Id}}

\def\pz{\partial _z}
\def\pv{\partial _v}
\def\pw{\partial _w}
\def\w{{\bf w}}
\def\x{{\bf x}}
\def\y{{\bf y}}
\def\tet{\vartheta}
\def\dwplus{\D _+ ^\w}
\def\dxplus{\D _+ ^\x}
\def\dzplus{\D _+ ^\z}
\def\chiz{{\chi _{\bar z} ^+}}
\def\chiw{{\chi _{\bar w} ^+}}
\def\chiu{{\chi _{\bar u} ^+}}
\def\chiv{{\chi _{\bar v} ^+}}
\def\os{\omega ^*}
\def\ps{{p_*}}
\def\div{{\rm div}}

\def\hO{\hat\Omega}
\def\ho{\hat\omega}
\def\o{\omega}
\def\KSB{{\rm KSB}}
\def\CM{{\rm CM}}
\def\WP{{\rm WP}}

\def\[{{\bf [}}
\def\]{{\bf ]}}
\def\Rd{{\bf R}^d}
\def\Ci{{\bf C}^{\infty}}
\def\pl{\partial}
\def\sq{{{\sqrt{{\scalebox{0.75}[1.0]{\( - 1\)}}}}}\hskip .01in}
\newcommand{\dotcup}{\ensuremath{\mathaccent\cdot\cup}}

\newcommand{\ssubset}{\subset\joinrel\subset}

\newtheorem{claim}{Claim}[section]
\newtheorem{theorem}{Theorem}[section]
\newtheorem{proposition}{Proposition}[section]
\newtheorem{lemma}{Lemma}[section]
\newtheorem{conjecture}{Conjecture}[section]
\newtheorem{example}{Example}[section]
\newtheorem{definition}{Definition}[section]
\newtheorem{corollary}{Corollary}[section]
\newtheorem{remark}{Remark}[section]

\address{$^*$ Department of Mathematics, Rutgers University, Piscataway, NJ 08854}

\address{$^{**}$ Department of Mathematics and Computer Science, Rutgers University, Newark, NJ 07102}

\address{$^\dagger$Department of Mathematics and Computer Science,  Rutgers University, Newark, NJ 07102}

 \thanks{ Research supported in part by National Science Foundation grant  DMS-1711439, DMS-1609335 and  Simons Foundation Mathematics and Physical Sciences-Collaboration Grants, Award Number: 631318. }

\centerline{{\bf   CONTINUITY OF THE WEIL-PETERSSON POTENTIAL }\footnote{Research supported in part by National Science Foundation grant  DMS-1711439, DMS-1609335 and  Simons Foundation Mathematics and Physical Sciences-Collaboration Grants, Award Number: 631318. }}

\bigskip

\centerline{ \small  JIAN SONG$^*$, JACOB STURM$^{**}$, XIAOWEI WANG$^\dagger$}

 \bigskip
 \medskip

{\noindent \small A{\scriptsize BSTRACT}. \footnotesize $~$~~    
Let
$\mathcal{M}_{\rm KSB}$ ({\it resp.} $\overline{\cM}_\KSB$) be the
the moduli space   of $n$-dimensional K\"ahler-Einstein manifolds ({\it resp.} varieties) $X$ with $K_X$ ample.
We prove that the Weil-Petersson metric on $\mathcal{M}_{\rm KSB}$ extends uniquely to the projective variety $\overline{\cM}_\KSB$, as a closed positive current with continuous local potentials.   This generalizes  a theorem of Wolpert \cite{W1} which treats the case $n=1$, and also confirms a conjecture of Berman-Guenancia \cite{BG}. In addition, we derive uniform estimates for the volumes of sublevel sets of \KE potentials.}

\bigskip

\section{Introduction}

\indent The Deligne-Mumford moduli space $\mathcal{M}_g$ is a  quasi-projective variety  which parametrizes (isomorphism classes of) smooth curves of genus $g$. If $X\in \cM_g$, then the tangent space of $\cM_g$ at $X$ is the space of quadratic differentials $H^0(X,2K_X)$. The length of a tangent vector $\eta$  with respect to $\o_{\rm WP}$, the Weil-Petersson 
 metric,  is its $L^2$ norm with
respect to the hyperbolic metric on $X$, and  
measures the rate at which the complex structure changes in the direction $\eta$.
 The metric   has negative holomorphic sectional curvature and finite diameter.   Wolpert \cite{W1, W2} proved that $\omega_{\rm WP}$ extends to a K\"ahler current on $\overline{\mathcal{M}}_g$, 
a projective variety  which parametrizes stable curves of  genus $g$, and he showed that locally on $\overline{\mathcal{M}}_g$ we have $\omega_{\rm WP}= \ddbar \varphi$ for some continuous plurisubharmonic function $\varphi$.

In dimension $n>1$, the generalization of $\cM_g$ is $\mathcal{M}_{\rm KSB}$, which parametrizes smooth projective varieties $X$ with $K_X$ ample (i.e. \KE manifolds with $K_X$ ample, by the theorem of Aubin, Yau). The Weil-Petersson metric $\o_{\rm WP}$ on the tangent space of $\mathcal{M}_{\rm KSB}$ 
at a point $X\in \mathcal{M}_{\rm KSB}$ is the $L^2$-metric on  the space of harmonic $(0,1)$-forms with coefficients in the tangent bundle of the manifold $X$.  Here the theory is not as complete as the Riemann surface case, but much is known. An early key result was the computation of the holomorphic bisectional curvature of $\o_{\rm WP}$ which was achieved by Siu \cite{Si}.  A second important advance is the result of Schumacher \cite{Sch} which can be formulated as follows.
Suppose $\pi:\mathcal{X} \rightarrow B$ be  
a smooth family of \KE manifolds $X$ of dimension $n$ and 
$K_X$ ample.
The relative canonical bundle $K_{\mathcal{X}/B}$ has a canonical metric $h$, given by the \KE volume form on each fiber. 
The main result of \cite{Sch}  shows that $\o$, the curvature of $h$, is a positive $(1,1)$ form on $\cX$. Moreover, $\pi_*(\o^{n+1})=\o_{\rm WP}$. Let $\bar\pi:  \overline{\mathcal{X} }\rightarrow \overline{B }$ be a
flat family of \KE varieties  $X$ of dimension $n$ and $K_X$ ample (i.e. ``stable"" varieties $X$  - cf. \cite{BG} for background and precise defintions)
extending the smooth family $\pi:\mathcal{X} \rightarrow B$. Then $\o$
 can be extended  to a closed positive current on 
 $\overline{\mathcal{X}}$ with analytic singularities. In \cite{SSW}, we show that this positive current has vanishing Lelong number and hence 
 $K_{\overline{\mathcal{X}}/\overline{B}}$,
 the relative canonical bundle, is nef on $\bar\cX$. As a consequence, the Weil-Petersson volume of 
$\overline{\cM}_\KSB$ 
is a finite rational number.  

The goal of this paper is generalize Wolpert's result \cite{W1, W2} to higher dimensions, i.e. to establish the continuity of the local K\"ahler potential of $\o_{\rm WP}$ on 
 the compactified moduli space $\overline{\cM}_\KSB$.  In the process, we establish uniform estimates for the volumes of sublevel sets for the \KE potential, which may be of independent interest.

In order to state our results precisely, we first review some necessary background.
By the classical results Aubin and Yau, \cite{Au, Y}, a \K manifold with $K_X$ ample has a unique negatively curved \KE metric.  In \cite{SSW}, we prove that the Gromov-Hausdorff completion of the moduli space of $n$-dimensional negatively curved K\"ahler-Einstein manifolds  is canonically identified with the KSB compatification of the moduli space of smooth canonically polarized manifolds. In addition, we show the local potentials of the Weil-Petersson metric are bounded: Let $\cL_{\CM}\rightarrow \overline{\cM}_\KSB$  be the CM line bundle (as described in  \cite{PX,SSW}) and let $h$ be a fixed smooth hermitian metric on $\cL_{\CM}$. Since $\cL_{\CM}$ is ample \cite{PX}, we can choose 
$h=h_{\rm FS}$
to be the Fubini-Study metric restricted to $\cL_{\CM}$ and
$\omega_{\rm FS}=\textnormal{Ric}(h_{\rm FS})$. Then $\omega_{\rm WP}$, the Weil-Petersson metric  on $\overline{\cM}_\KSB$, is the curvature of a hermitian metric 
\begin{equation}\label{1wp}
h_{\rm WP} = h_{\rm FS}e^{-\varphi_{WP}}
\end{equation}
 and
\begin{equation}\label{2wp}
\omega_{\rm WP}= \omega_{\rm FS}+ \ddbar \varphi_{WP} \in c_1(\cL_{\CM})
\end{equation}
for some $\varphi_{\rm WP} \in \textnormal{PSH}(\overline{\cM}_\KSB, \omega_{\rm FS}) \cap L^\infty(\overline{\cM}_\KSB)$, as shown in \cite{SSW}. Our main result is the following.

\begin{theorem}\label{main}  The hermitian metric $h_{WP}$ is continuous and 
$$\varphi_\WP \in C^0 \left( \overline{\cM}_\KSB \right).$$

\end{theorem}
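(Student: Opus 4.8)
The plan is to establish continuity of $\varphi_\WP$ by combining the local description of $\o_\WP$ as a fiber integral (Schumacher's theorem, together with its extension to stable families) with a careful analysis of how the \KE potentials on the fibers degenerate as the base point approaches the boundary $\overline{\cM}_\KSB \setminus \cM_\KSB$. Since $\varphi_\WP$ is already known to be bounded and $\o_\WP$-psh on $\overline{\cM}_\KSB$, upper semicontinuity is automatic, so the real content is lower semicontinuity at boundary points. Working locally over a small disk (or polydisk) $B$ in $\overline{\cM}_\KSB$ with a flat family $\bar\pi : \overline{\cX} \to B$ of stable \KE varieties, one writes $\varphi_\WP$ on $B$ (up to a smooth, hence continuous, term coming from $h_\FS$) as a fiber integral $\bar\pi_*(\o^{n+1})$ of the curvature of the canonical metric $h$ on $K_{\overline{\cX}/B}$, which in turn is governed by the fiberwise \KE volume forms $\o_{\KE,t}^n = e^{\f_t}\,\Omega_t$ for a fixed reference volume form. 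Thus continuity of $\varphi_\WP$ at $t_0$ reduces to controlling $\f_t$ — and its first and second derivatives in the base direction — uniformly as $t \to t_0$.

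The key steps, in order, are: (i) set up the local family $\bar\pi$, trivialize $K_{\overline{\cX}/B}$ appropriately, and express $\varphi_\WP$ explicitly as an integral over the fibers of a density built from the relative \KE potentials; (ii) prove a \emph{uniform} a priori estimate for the \KE potentials $\f_t$ on the singular fibers — this is where the promised ``uniform estimates for the volumes of sublevel sets of \KE potentials'' enter: one needs that $\{\f_t < -s\}$ has volume decaying uniformly in $t$, giving equi-integrability of $e^{\f_t}$ and its relevant derivatives; (iii) use this equi-integrability, together with the fact that the \KE metrics on the smooth loci converge locally smoothly (by the openness/stability of the \KE equation and elliptic regularity away from the singular set), to pass to the limit in the fiber integral and conclude $\varphi_\WP(t) \to \varphi_\WP(t_0)$; (iv) since $t_0$ was an arbitrary boundary point and $\varphi_\WP$ is already continuous (indeed smooth) on the smooth locus $\cM_\KSB$ by Schumacher, this gives continuity on all of $\overline{\cM}_\KSB$, and then $h_\WP = h_\FS e^{-\varphi_\WP}$ is continuous as well.

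The main obstacle is step (ii): obtaining genuinely uniform control on the \KE potentials $\f_t$ as the fiber $X_t$ degenerates to the stable variety $X_{t_0}$, whose singular locus can absorb a nontrivial amount of the potential's mass. Unlike the curve case handled by Wolpert \cite{W1}, where the hyperbolic metrics degenerate in an explicitly understood way (cusp formation), here one must quantify how much the \KE volume form concentrates near $X_t^{\sing}$ (or near the part of $X_t$ collapsing onto $X_{t_0}^{\sing}$), and show the contribution of that region to the fiber integral $\bar\pi_*(\o^{n+1})$ tends to zero. This requires the uniform sublevel-set volume bounds for $\f_t$, which in turn rest on a combination of the Skoda-type integrability estimates, comparison with the boundedness of $\varphi_\WP$ already established in \cite{SSW}, and the control on Lelong numbers from \cite{SSW} (nefness of $K_{\overline{\cX}/\overline{B}}$). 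Once equi-integrability is in hand, the limiting argument in (iii) is comparatively routine, amounting to dominated convergence on the smooth part plus a vanishing-mass estimate near the singularities.
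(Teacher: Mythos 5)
Your overall strategy --- writing $\omega_{\rm WP}$ locally as a fiber integral of $(\chi+\ddbar\varphi)^{n+1}$, combining smooth convergence of the \KE potentials on the regular locus with a vanishing-mass estimate near the singular fibers, and feeding in uniform sublevel-set volume bounds --- is essentially the paper's route (Sections 5--7). The genuine gap is at your step (ii), which is the technical heart of the matter and which your proposal defers to tools that would not deliver it: Skoda-type integrability, the vanishing of Lelong numbers of the total-space current from \cite{SSW}, and the boundedness of $\varphi_{\rm WP}$ give no estimate that is \emph{uniform in the base parameter} $t$, and they give no control at all of integrals against the fiberwise Monge--Amp\`ere measures $(\chi_t+\ddbar\varphi_t)^m\wedge\chi_t^{n-m}$, which is what the fiber integral actually requires (a Skoda bound on the total space, sliced by Fubini, says nothing about an individual degenerate fiber, and the relevant mass can a priori concentrate there). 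The paper obtains the needed uniformity from three nontrivial inputs: the sharp lower bound $\varphi\geq-(2n+\e)\log(-\log|\si_\cE|^2_{h_\cE})-C_\e$, proved by a barrier construction after semistable reduction (Proposition \ref{C0}); the uniform bound on $\int_{\cX_t}(-\log|\si_\cE|^2_{h_\cE})\chi_t^n$ via Moriwaki's continuity of Deligne pairings (Proposition \ref{mor}); and the capacity estimates of Kolodziej and Guedj--Zeriahi, which together yield the exponential decay of Proposition \ref{expdec} and the weighted estimate of Corollary \ref{expdec2}. Without an input of this strength your ``equi-integrability'' is an assumption rather than a consequence.

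Two further points are not as routine as you claim. In the limit $t\to\hat t$ the measures themselves vary, so dominated convergence on the regular part must be supplemented by a two-sided mass argument --- conservation of the total mass $[\chi_t]^n$ together with the smallness of the mass of a fixed neighborhood of the singular set on the central fiber, as in Lemma \ref{62} and Lemma \ref{63} --- to rule out mass escaping into that neighborhood for nearby $t$; this is a weak-convergence bookkeeping step, not plain dominated convergence. Finally, your step (iv) presumes a flat family over a small disk in $\overline{\cM}_\KSB$ itself; since $\overline{\cM}_\KSB$ is only a coarse moduli space, the argument must be run on a finite cover $B\to\overline{\cM}_\KSB$ carrying a stable family (after normalization and resolution of $B$), and one must then show that the continuous potential on $B$ descends. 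The paper does this by observing that $\omega_{\rm FS}$ pulls back to zero on the positive-dimensional fibers of $B\to\overline{\cM}_\KSB$, so the continuous psh potential is constant along them and therefore defines a continuous function downstairs; some such descent argument is needed to finish your outline.
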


In particular, the Weil-Petersson metric extends to a closed positive current $\omega_{\rm WP}$ on $\overline{\cM}_\KSB$, i.e., for any point $p\in \overline{\cM}_\KSB$, there exist an open neighborhood $U$ of $p$ and $\Phi_{WP} \in \textnormal{PSH}(U) \cap C^0(U)$ such that in $U$, 
$$\omega_{\rm WP} = \ddbar \Phi_{WP}. $$
\v

Theorem \ref{main}  confirms a conjecture of Berman and Guenancia \cite{BG}. The estimates we need use the approach of \cite{SSW}, but  sharper  bounds are required.  Our argument should generalize to the case of  stable families of klt pairs whose general fibres are not necessarily smooth. We conjecture that the Weil-Petersson current can still be  still defined for the moduli space of canonically polarized projective varieties with klt singularities and it can be extended to a K\"ahler current with continuous local potentials on the compactified KSB moduli space.  Let $(\cM_\KSB)^\circ$ be the smooth interior part of $\overline{\cM}_\KSB$. Then as we conjectured in \cite{SSW},  $((\cM_\KSB)^\circ, \omega_{\rm WP})$ should have finite diameter and its metric completion should be homeomorphic to $\overline{\cM}_\KSB$. The conjecture always holds for a stable family of K\"ahler-Einstein manifolds over a one-dimensional disk where the central fibre has only singularities of complete simple normal crossings \cite{T0, Ru1}. 

\v
Next we introduce the notions of stable varieties and stable families.
The  following  equivalent characterizations of ``stable variety" are established in \cite{BG}. A  projective variety $X$ is stable if $K_X$ is ample and  it satisfies any one of the following additional conditions.
\begin{enumerate}
\item $X^{\rm reg}$ has a \KE metric whose volume is $c_1(K_X)^n$. 
\item $X$ is smooth or $X$ has a worst semi-log canonical singularities.
\item $X$ is K-stable (in the sense of \cite{T1,D})
\
\end{enumerate}

Now we can define stable families as follows.

\begin{definition} \label{stabfib} A  flat projective morphism $\pi: \mathcal{X} \rightarrow B$ between normal varieties  a stable family  if the following holds.

\begin{enumerate}

\item The relative canonical  bundle $K_{\mathcal{X}/B}$ is $\mathbb{Q}$-Cartier and  is $\pi$-ample.

\medskip

\item The  fibres of $\pi$ are stable and the generic fiber is smooth.

\end{enumerate}

\end{definition}

\v

We let $B^\circ$ be  the set of smooth points of $B$ over which $\pi$ is smooth and $\mathcal{X}^\circ = \pi^{-1}(B^\circ).$
 For each $t\in B^\circ$, there exists a unique K\"ahler-Einstein metric $\omega_t\in c_1(\cX_t)$ on the fibre $\cX_t = \pi^{-1}(t)$ and one can define the hermitian metric on the relative canonical bundle $K_{\cX^\circ/B^\circ}$ by $h_t = (\omega_t^n)^{-1}.$ It is proved by Schumacher \cite{Sch} and Tsuji \cite{Ts}, using different methods, that
$$\textnormal{Ric}(h) = - \ddbar \log h$$
is nonnegative on $\cX^\circ$ and  it  is strictly positive for a nowhere infinitesimally trivial family.
It is further shown in \cite{Sch} that $h$ can be uniquely extended to a non-negatively curved singular hermitian metric on $K_{\cX/B}$ with analytic singularities. In \cite{SSW},  we prove that $h$ has vanishing Lelong number everywhere on $\cX$ and it tends $-\infty$ exactly at the non-klt locus of the special fibres  of $\pi: \cX \rightarrow B$. In particular, our result gives an analytic proof of Fujino's theorem 
\cite{Fu} that the relative canonical bundle $K_{\cX/B}$ is nef. 

 If we let $\cE_{slc}$ be the union of semi-log and log canonical locus of special fibres of $\cX$ (or equivalently, the non-klt locus), then by \cite{BG, S},
$$\cE_{slc} = \{ \phi = -\infty\}\ \   $$ 
where $\phi$ is the K\"ahler-Einstein potential.
Our next theorem  establishes the continuity of $h$  on $\cX\setminus \cE_{slc}$.

\begin{theorem} \label{main2} Let $\pi: \mathcal{X} \rightarrow B$ be a stable family of $n$-dimensional   canonical models over a normal variety $B$.  Let $\omega_t$ be the unique K\"ahler-Einstein metric on $\mathcal{X}_t$ for $t\in B^\circ$ and $h$ be the hermitian metric on the relative canonical sheaf $K_{\mathcal{X}^\circ/B^\circ}$ defined by
$$h_t =( \omega_t^n)^{-1}.$$
The curvature $\theta= \ddbar \log h$ of $(K_{\mathcal{X}^\circ/B^\circ}, h)$  extends uniquely to a closed nonnegative $(1,1)$-current on $\mathcal{X}$ satisfying the following.

\smallskip

\begin{enumerate}

\item $\theta$ has vanishing Lelong number everywhere in $\cX$. 

\item $\theta|_{\mathcal{X}_t}=\omega_t$ for $t\in B^\circ$.

\medskip

\item $h$ is continuous on $\cX\setminus \cE_{slc}$.

\end{enumerate}
Moreover, if all the fibres of $\pi: \mathcal{X} \rightarrow B$ have at worst log terminal singularities, then $h$    is continuous on $\mathcal{X}$.

\end{theorem}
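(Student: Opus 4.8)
The plan is to analyze the Kähler-Einstein potential $\phi$ fiberwise and to upgrade the known uniform estimates of \cite{SSW} to a genuine continuity statement. Fix a point $x_0 \in \cX \setminus \cE_{slc}$ lying over $t_0 \in B$. We may assume $t_0 \in B \setminus B^\circ$, since on $\cX^\circ$ the family is smooth and the continuity (indeed smoothness) of $h$ is classical by Schumacher \cite{Sch}. Choosing a local embedding and a generator $\sigma$ of $K_{\cX/B}$ near $x_0$, we write $h = e^{-\phi} |\sigma|^{-2}$ where, on each smooth fiber $\cX_t$, $\phi = \phi_t$ solves the complex Monge-Ampère equation $(\omega_0 + \ddbar \phi_t)^n = e^{\phi_t} \Omega_t$ with $\Omega_t$ the reference volume form induced by $\sigma$, normalized by the Kähler-Einstein condition $\Ric(\omega_t) = -\omega_t$. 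The two ingredients I expect to need are: (i) a uniform $L^\infty_{loc}$ bound for $\phi$ away from $\cE_{slc}$ (this is essentially contained in \cite{SSW}, and is what gives $\varphi_{\WP} \in L^\infty$); and (ii) a uniform modulus of continuity.

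First I would establish semicontinuity in both directions. Upper semicontinuity of $\phi$ (equivalently lower semicontinuity of $h$) follows from the fact that $\theta = \ddbar \log h$ is a closed positive current with, by part (1), vanishing Lelong numbers everywhere: a quasi-psh function with vanishing Lelong number at a point is automatically finite and, more importantly, the Demailly regularization / attenuation-of-singularities machinery shows it is "continuous on average." The genuinely new input must be lower semicontinuity of $\phi$, i.e. the absence of downward jumps as one approaches the singular fiber. Here the strategy is a stability/convergence argument for the Monge-Ampère equation: if $t_j \to t_0$ in $B$ and $x_j \to x_0$ with $x_j \in \cX_{t_j}$, one shows $\liminf_j \phi_{t_j}(x_j) \geq \phi_{t_0}(x_0)$. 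This is done by a barrier construction — comparing $\phi_{t_j}$ from below with the solution of a Monge-Ampère equation on a fixed ambient neighborhood whose right-hand side degenerates in a controlled way as $t_j \to t_0$ — using that $x_0 \notin \cE_{slc}$ precisely guarantees that the limiting right-hand side $e^{\phi_{t_0}} \Omega_{t_0}$ has an integrable, indeed locally bounded, density near $x_0$ (the non-klt locus is exactly where it blows up). Kolodziej-type stability estimates for the Monge-Ampère equation, applied uniformly in $t$, then convert the $L^1$ (or capacity) convergence of the densities into uniform convergence of the potentials on compact subsets of $\cX \setminus \cE_{slc}$, which yields both continuity of $h$ and statement (3).

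The key analytic estimate — and the main obstacle — is the uniform a priori control on the family $\{\phi_t\}_{t \in B^\circ}$ that makes the stability argument go through near a singular fiber: one needs a uniform bound on $\|\phi_t\|_{L^\infty(K)}$ and a uniform integrability exponent $p > 1$ for $e^{\phi_t}$ against a fixed measure, on compact $K \subset \cX \setminus \cE_{slc}$, that does not deteriorate as $t \to t_0$. The way I would obtain this is to work on a fixed log resolution $\mu: \widetilde{\cX} \to \cX$ (or on the ambient projective total space with the CM polarization, as in the setup of Theorem \ref{main}), transport the Monge-Ampère equations to a fixed Kähler class there, and invoke the uniform a priori estimates of \cite{SSW} — precisely the "sharper bounds" the introduction alludes to — combined with Skoda's integrability theorem applied to the fixed divisorial data encoding the discrepancies. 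With this in hand, Kolodziej's uniform estimate produces an explicit modulus of continuity for each $\phi_t$ that is uniform in $t$, and passing to the limit gives continuity of $\phi$, hence of $h$, on $\cX \setminus \cE_{slc}$. Finally, when all fibers are klt, $\cE_{slc} = \emptyset$, so the argument applies on all of $\cX$ and gives the last assertion; the only extra point to check there is that the reference densities $\Omega_t$ stay locally bounded away from $0$ and $\infty$ uniformly, which is immediate since klt singularities have locally integrable, and after resolution locally bounded, adjoint densities.
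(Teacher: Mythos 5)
Your strategy (fiberwise Monge--Amp\`ere stability in the spirit of \cite{DGG}) is a legitimate alternative direction, but as written it has two genuine gaps. First, the upper-semicontinuity step is not established by your argument: vanishing Lelong numbers do not give continuity or any ``continuity on average'' (e.g.\ $-\log(-\log|z|)$ has zero Lelong number and is unbounded), and what you actually need is the joint statement $\limsup_{j}\varphi_{t_j}(x_j)\le\varphi_{t_0}(x_0)$, which requires knowing that the upper-semicontinuous psh extension of the family restricted to the degenerate fiber is exactly the K\"ahler--Einstein potential $\varphi_{t_0}$ and not something larger; this identification rests on the uniqueness of the K\"ahler--Einstein current on the singular fiber (Theorem~1.1 of \cite{S}), which you never invoke. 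Second, and more seriously, the core of your lower bound is ``Kolodziej-type stability applied uniformly in $t$,'' but Kolodziej's stability theorem compares two solutions on a \emph{fixed} compact K\"ahler manifold with $L^p$ densities; here the complex structures vary and the central fiber is singular. To make sense of it you must either work locally (then you need uniform control of boundary values of $\varphi_{t_j}$ on a shell around the compact set, which is exactly the kind of estimate you are trying to prove) or pull everything back to a fixed resolution, where the fiberwise reference class is only big and nef, and stability with a uniform modulus of continuity in degenerate classes is not an off-the-shelf result. Likewise, the ``uniform integrability exponent $p>1$ for the densities that does not deteriorate as $t\to t_0$'' near the klt singular points of the central fiber is precisely the hard uniform estimate; \cite{SSW} gives local boundedness away from $\cE_{slc}$ and the global $\log\log$ lower bound of Proposition~\ref{C0}, but not this uniform $L^p$ control for nearby smooth fibers, and Skoda's theorem on the fixed central fiber does not transfer to the family without substantial additional work.

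For comparison, the paper's proof of part (3) is differential-geometric rather than pluripotential-theoretic: it uses the pointed Gromov--Hausdorff convergence of $(\cX_{t_j},g_{t_j})$ from \cite{SSW}, a partial $C^0$-estimate with normalized pluricanonical sections to get the uniform local $L^\infty$ bound for $\varphi_t$ on geodesic balls away from $\cE_{slc}$ (Lemma~\ref{0bound81}), and then maximum-principle estimates on geodesic balls (Lemma~\ref{2ndest} and Proposition~\ref{gradest}) giving uniform Laplacian and gradient bounds, hence uniform Lipschitz continuity of $\varphi_t$ on compact subsets of $\cX\setminus\cE_{slc}$; combined with the smooth convergence on $\cX^{\textnormal{reg}}$ and uniqueness of the limiting K\"ahler--Einstein current (Lemma~\ref{limi42}), this yields continuity of $h$. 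If you want to pursue your route, you would essentially have to redo the uniform family estimates of \cite{DGG} in this setting, which is a genuinely different and nontrivial undertaking, not a corollary of the ingredients you cite.
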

The variety  $\overline{\cM}_\KSB$ is a coarse moduli space, and as such does not support a universal family, but (cf. \cite{SSW}) there exists a projective variety $B$, a finite cover $F: B\ra \overline{\cM}_\KSB$ and a stable family $\cX_B\ra B$ such that if $t\in B$ then
$[X_t]= F(t)$. Let $\cK(n,V)$ be the set of  all negatively curved \KE manifolds of dimension $n$ and volume at most $V$.  Matsusaka's big theorem implies that every $X\in \cK(n,V)$ lies in one of a finite number of such families. In other words, there is a single projective
family $\cX\sub \cB\times \P^{N_m}$, pluricanonically imbedded, which contains every element of $\cK(n,V)$. Let $\chi={1\over m}\o_{\rm FS}$ where $\o_{\rm FS} $ is the Fubini-Study metric on $\P^{N_m}$. For $X\in\cK(n,V)$ let

$$ \o_{_{X}}\ = \ \chi_{_{X}}+\sqrt{-1}\ddb\phi_{_{X}}
$$
denote the unique \KE metric on $X$
(\cite{BG, S, SSW})
. It is necessary to estimate decay rates of $\phi_X$  in order to understand the asymptotic behavior of the Weil-Petersson metric near the boundary of the KSB moduli space . 
More precisely, we shall need the following volume estimate for the sublevel sets of the K\"ahler-Einstein potentials.

%
%



\begin{theorem}\label{main3} Fix $n,V>0$ and $\chi\ra\cB$ as above. Then there exists
$C=C(n,V,\chi)>~0$ such that for all negatively curved K\"ahler-Einstein manifolds $X$ of dimension $n$ and volume at most $V$ the following holds. 
For all $0\leq m\leq n$ and all $K\geq 1$ 
have

\begin{equation}\label{1levest}
 \I_{\{\varphi_{_{X}}<-K \}}\, ( \chi_{_{X}}+ \ddbar\varphi_{_{X}})^m \wedge \chi_{_{X}}^{n-m}  \leq  CK^ne^{-K/{(4n+2)}} .\ 
\end{equation}
Moreover,  the estimate  (\ref{1levest}) holds for all smoothable negatively curved K\"ahler-Einstein varieties of dimension $n$ and volume at most $V$.

\end{theorem}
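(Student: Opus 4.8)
Writing $\o_{_{X}}=\chi_{_{X}}+\ddbar\varphi_{_{X}}$ for the \KE metric on $X\in\cK(n,V)$, normalized by $\sup_X\varphi_{_{X}}=0$, the plan is to first prove \eqref{1levest} for smooth $X$ with constants depending only on $(n,V,\chi)$, and then obtain the smoothable case by degeneration. Here $\varphi_{_{X}}\le 0$ is $\chi_{_{X}}$-psh and satisfies $\o_{_{X}}^n=e^{\varphi_{_{X}}}\,\Omega_{_{X}}$, where $\Omega_{_{X}}$ is the volume form attached to the fixed pluricanonical polarization ($\Ric(\Omega_{_{X}})=-\chi_{_{X}}$), and $\I_X\o_{_{X}}^n=c_1(K_X)^n\le V$. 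The first step is a monotonicity reduction: for $0\le m\le n-1$ and a.e.\ $K$ (hence all $K$, by right-continuity in $K$),
\begin{equation}\label{monotone}
\I_{\{\varphi_{_{X}}<-K\}}\o_{_{X}}^m\wedge\chi_{_{X}}^{n-m}\ \le\ \I_{\{\varphi_{_{X}}<-K\}}\o_{_{X}}^{m+1}\wedge\chi_{_{X}}^{n-m-1}.
\end{equation}
To see this, put $\psi=\max(\varphi_{_{X}},-K)$ and $S=\o_{_{X}}^m\wedge\chi_{_{X}}^{n-m-1}$, a smooth closed positive form; then $\I_X\ddbar\psi\wedge S=0$ by Stokes, so $\I_X(\chi_{_{X}}+\ddbar\psi)\wedge S=\I_X\chi_{_{X}}\wedge S=\I_X\o_{_{X}}\wedge S=c_1(K_X)^n$, and since the positive current $\chi_{_{X}}+\ddbar\psi$ equals $\o_{_{X}}$ on $\{\varphi_{_{X}}>-K\}$ and $\chi_{_{X}}$ on $\{\varphi_{_{X}}<-K\}$ while $\{\varphi_{_{X}}=-K\}$ is Lebesgue-null for a.e.\ $K$, decomposing the integral yields \eqref{monotone}. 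Consequently it is enough to bound the single quantity $\I_{\{\varphi_{_{X}}<-K\}}\o_{_{X}}^n$.

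The main ingredient is then a uniform integrability (``Skoda-type'') estimate:
\begin{equation}\label{skoda}
\I_X e^{-\varphi_{_{X}}/(4n+2)}\,\o_{_{X}}^n\ =\ \I_X e^{\frac{4n+1}{4n+2}\varphi_{_{X}}}\,\Omega_{_{X}}\ \le\ C(n,V,\chi).
\end{equation}
Granting \eqref{skoda}, Chebyshev's inequality gives $\I_{\{\varphi_{_{X}}<-K\}}\o_{_{X}}^n\le e^{-K/(4n+2)}\,\I_X e^{-\varphi_{_{X}}/(4n+2)}\o_{_{X}}^n\le Ce^{-K/(4n+2)}$, and combined with \eqref{monotone} this proves \eqref{1levest} (in fact without the factor $K^n$; a cruder route, iterating Chebyshev with a sequence of exponents degrading to $\tfrac1{4n+2}$, also yields \eqref{1levest} as stated). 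To prove \eqref{skoda} I would split $X$ into a fixed neighbourhood $U$ of the non-klt locus of the nearby special fibre and its complement. On $X\setminus U$ one has $\varphi_{_{X}}$ bounded and $\Omega_{_{X}}$ comparable to $\chi_{_{X}}^n$, uniformly by \cite{SSW}, so that part of the integral is $\le C$. On $U$, after a uniformly bounded finite base change and log-resolution of the family in which the non-klt divisor becomes $\{z_1\cdots z_k=0\}$, one compares $\o_{_{X}}$ with the fixed model Poincar\'e--cusp metric $\o_{\rm mod}=\chi_{_{X}}+\ddbar\psi_{\rm mod}$, $\psi_{\rm mod}=-2\sum_i\log(-\log|z_i|^2)$, by a barrier/maximum-principle argument for the Monge--Amp\`ere equation near this divisor --- a uniform, quantitative refinement of the fact $\cE_{slc}=\{\varphi_{_{X}}=-\infty\}$ from \cite{SSW} and Theorem \ref{main2} --- so as to obtain $\varphi_{_{X}}\le\psi_{\rm mod}+C$ on $U$ with $C$ uniform. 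Then $\I_U e^{\frac{4n+1}{4n+2}\varphi_{_{X}}}\Omega_{_{X}}\le C\,\I_U e^{\frac{4n+1}{4n+2}\psi_{\rm mod}}\,\Omega_{\rm mod}$, which is finite and uniformly bounded since $\frac{4n+1}{4n+2}>\tfrac12$ makes the model integrand $\prod_i|z_i|^{-2}(-\log|z_i|^2)^{-\frac{4n+1}{2n+1}}$ locally integrable.

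For a smoothable negatively curved \KE variety $X_0$ of dimension $n$ and volume $\le V$, I would realize it as the central fibre of a stable family whose general fibres $X_t\in\cK(n,V)$ are smooth, apply \eqref{1levest} to each $X_t$, and let $t\to 0$: using the weak convergence of the mixed (non-pluripolar) Monge--Amp\`ere masses $\o_{X_t}^m\wedge\chi_{X_t}^{n-m}$ and the convergence of \KE potentials established in \cite{SSW} and Theorem \ref{main2}, together with lower semicontinuity of the mass of a positive measure on the open set $\{\varphi_{X_0}<-K\}$ under weak convergence, one gets $\I_{\{\varphi_{X_0}<-K\}}\o_{X_0}^m\wedge\chi_{X_0}^{n-m}\le\liminf_{t\to 0}\I_{\{\varphi_{X_t}<-K\}}\o_{X_t}^m\wedge\chi_{X_t}^{n-m}\le CK^ne^{-K/(4n+2)}$.

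The step I expect to be the real obstacle is the uniform barrier estimate $\varphi_{_{X}}\le\psi_{\rm mod}+C$ underlying \eqref{skoda}: one must control, uniformly over the entire family and quantitatively, the rate at which the \KE potential diverges to $-\infty$ along the non-klt locus --- which requires both a uniform model metric (coming from boundedness of the singularities of stable fibres and a uniform semistable-type reduction of the family) and a comparison argument that does not degenerate as $X$ approaches the boundary of moduli; this is precisely where the estimates of \cite{SSW} have to be sharpened. By contrast, once \eqref{skoda} is available, the monotonicity reduction \eqref{monotone}, the Chebyshev step, and the degeneration argument are all routine.
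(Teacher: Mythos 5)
Your reduction to a single Skoda-type bound $\int_X e^{-\varphi_X/(4n+2)}\,\omega_X^n\le C(n,V,\chi)$ (via the monotonicity step and Chebyshev) is fine as far as it goes, and your degeneration argument for the smoothable case is essentially the remark the paper itself makes at the end of its Section 4. But the proposal has a genuine gap exactly where you flag it, and it is not a gap that can be waved through: your proof of the Skoda bound rests on a uniform upper barrier $\varphi_X\le \psi_{\rm mod}+C$ of Poincar\'e--cusp type near the non-klt locus, together with a uniform comparison $\Omega_X\le C\,\Omega_{\rm mod}$, and no argument is given for either. Note the direction of the inequality: what is actually available (from \cite{SSW}, from \cite{S}, and from Proposition \ref{C0} of this paper) is a uniform \emph{lower} bound $\varphi\ge -(2n+\epsilon)\log(-\log|\sigma_\cE|^2_{h_\cE})-C_\epsilon$, i.e.\ control on how slowly $\varphi$ can diverge; your route needs a uniform \emph{upper} bound, i.e.\ a guaranteed decay rate of $\varphi$ to $-\infty$ along the degenerating locus, which would have to come from constructing a family of supersolutions of the fibrewise Monge--Amp\`ere equations with constants independent of the fibre. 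That requires uniform lower bounds on the poles of $\Omega$ along all non-klt components after a uniform semistable reduction, and a comparison argument on nearby smooth fibres where the model divisor is not even present (the collar-versus-cusp issue already visible for $n=1$); nothing of this sort is in the cited literature at this generality (the closest, \cite{DGG}, treats one-dimensional base with irreducible central fibre). In effect, your barrier statement is of the same depth as the theorem you are trying to prove --- indeed the Skoda bound with exponent slightly below $1/(4n+2)$ is \emph{equivalent} to the level-set estimate (\ref{1levest}) --- so asserting it as "a quantitative refinement" of $\cE_{slc}=\{\varphi=-\infty\}$ does not constitute a proof.

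For comparison, the paper never needs any upper decay rate for $\varphi$. It uses only the lower bound of Proposition \ref{C0}: setting $\theta_t=\log|\sigma_\cE|^2_{h_\cE}|_{\cX_t}$, the inclusion $\{\varphi_t<-K\}\subset\{\theta_t<-e^{(K-A-2C_1)/(4n+2)}\}$ converts the sublevel set of $\varphi_t$ into a sublevel set of the fixed quasi-psh weight $\theta_t$; then Lemma \ref{cap2} (Kolodziej) bounds the Monge--Amp\`ere mass by $K^n$ times capacity, Lemma \ref{cap1} (Guedj--Zeriahi) bounds that capacity by $e^{-(K-A-2C_1)/(4n+2)}\bigl(\int_{\cX_t}(-\theta_t)\chi_t^n+n\int_{\cX_t}\chi_t^n\bigr)$, and the uniformity in $t$ of $\int_{\cX_t}(-\theta_t)\chi_t^n$ is supplied by Proposition \ref{mor} via Moriwaki's continuity of Deligne pairings. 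If you want to salvage your approach, you should either prove the uniform cusp-type upper barrier (a substantial new estimate), or replace it by the capacity mechanism above, which is precisely how the paper sidesteps the need for it.
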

Theorem \ref{main3} implies a uniform exponential decay for the volumes of sub-level sets of the K\"ahler-Einstein potentials. We believe that these estimates will be useful in understanding the incompleteness of the Weil-Petersson metric. One would also like to derive a geometric version of Theorem \ref{main3} by replacing the K\"ahler-Einstein potential by the distance function.

We give a brief outline of the paper. In section 2, we review semi-stable reduction and in section 3, we establish the $C^0$-estimate for the K\"ahler-Einstein potentials. In section 4, we prove Theorem \ref{main3} for the exponential decay of the volume measure of the level sets of the K\"ahler-Einstein potentials. In section 5, we give an alternative proof for an integral estimate derived in \cite{SSW}.  In section 6 and 7, we prove the continuity of Weil-Petersson potentials for stable families and complete the proof of Theorem \ref{main}. Finally, we prove Theorem \ref{main2} in section 8.  

\section{Stable families and semistable reduction}

In this section, we will review  the semi-stable reduction theorem and follow the discussion in \cite{SSW} (section 4) with some simplification. Let $\pi:\cX\ra B$ be a stable family with $\dim B = d \geq 1$. The theorem of 
Adiprasito-Liu-Temkin
\cite{ALT}
says that there is a smooth variety $S'$, a finite base change $B'\ra B$,
and a birational map $\Psi:\cX'\ra \cX\times_B B'$ such that the projection
$\pi':\cX'\ra \cX\times_B B'$ is a semistable reduction. By abuse
of notation, we shall write $B$ for $B'$ and $\cX$ for 
$\cX'\ra \cX\times_B B'$. We obtain a diagram

\v
\begin{equation}\label{diag}
\begin{diagram}
\node{\ti\cX} \arrow {e,t}{\Phi}\arrow{ese,r}{\ti\pi}\node{\mathcal{X}'} \arrow{se,l}{ \pi' }  \arrow{e,t}{\Psi}         \node{\mathcal{X} } \arrow{s,r}{\pi} \\
\node{}   \node{}   \node{B} 
\end{diagram}
\end{equation}
where $\ti\cX$ and $\Phi$ are to be defined later.
\v
Semistable reduction means that there are local coordinates 
$t=(t_1,...,t_d)$  on $B$  and $(x_1,...,x_{n+d})$ on $\cX'$ such that the  $t_i\circ \pi'$ are multiplicity free monomials
in $x$. We introduce  some additional notations for more precise statements. For simplicity, we assume that $B$ is a Eculidean ball in $\mathbb{C}^d$ and so we can choose $t=(t_1, ..., t_d)$ as global coordinates on $B$. 

Let
$$ H_i\ = \ \{t_i\circ\pi=0\}\ \sub\ \cX.
$$
To say  that $\pi: \cX'\ra B$ is a semistable reduction means that for $1\leq i\leq d$, 
the Cartier divisor
$\Psi^*H_i$, the pullback of the Cartier divisor $H_i$, is a  divisor of simple normal crossings whose components $F_1,F_2,...$  all have multiplicity one. Note that these components may not be smooth since they can intersect themselves.

We write  
$$ \Psi^*H_i\ = \ \{t_i\circ\pi'=0\}\ = \  \s_{F\in \cA_i}F
$$
where $\cA_i$ is the set of irreducible components of $\Psi^*H_i$.
  Observe that 
$$\pi'(F) = \{t_i=0\}\ {\rm for\ all}\ F\in\cA_i, $$
so in particular, if $ i\not=j$,
$$  \cA_i\cap\cA_j\ = \phi .$$

There is a  disjoint partition $\cA_i = \cD_i\sqcup\cV_i$ such that the elements of $\cD_i$ are non-exceptional and those of $\cV_i$ are exceptional satisfying 
\begin{equation}\label{decoh0}
 \Psi^*H_i\ = \ H_i'\ + \ V_i'\ = \ 
\s_{D\in \cD_i}\, D\ + \ \s_{E\in \cV_i}E.
\end{equation}
Here $H_i'$ is the strict transform of $H_i$. 

Let ${\rm Exc}(\Psi)$ be the exceptional set of $\Psi$ and $\cF\sub {\rm Exc}(\Psi)$ the union of divisorial components of ${\rm Exc}(\Psi)$. Then $\cF$ can be written as a disjoint union as follows  
\begin{equation} \label{badset}
 \cF =  \cV\sqcup\cH,  
\end{equation}
where $\cH$ is the set of horizontal exceptional divisors, that is,  $E\in\cH$
if and only if  $\pi'(E)=B$, and $\cV$ is the set of vertical exceptional divisors.

We can write $\cX'=\cup_\al U^\al$, a finite covering by coordinate neighborhoods and choose local coordinates
$x\in U$ with the following properties. Fix $\al$ and let $U=U^\al$. 
Let $I=\{1,2,...,n+d\}$. Then there exist a disjoint decomposition $I=A_1\sqcup A_2\sqcup ... \sqcup A_d\sqcup R$ such that 
$$A_i=D_i\sqcup V_i$$ 
and
$$ t_i\circ\pi'  = \p_{j\in A_i}x_j  = 
\p_{j\in D_i}x_j\cdot\p_{j\in V_i}x_j.
$$
Here $D_i$ and $V_i$ are the set of indices associated to $\cD_i$ and $\cV_i$ given by the following:   $j\mapsto
\overline
{\{x_j=0\}}$
defines maps $\n:A_i\ra\cA_i$, $D_i\ra \cD_i$ and $V_i\ra\cV_i$ which need not be one to one since if $j,k\in D_i$ with $j\not=k$, then
$
\n(j)=\n(k) = D$ if the divisor $D$ intersects itself.   

The following lemma gives the construction of for $\tilde \pi: \tilde\cX \rightarrow B $ in diagram (\ref{diag}).

\begin{lemma} \label{dscr}
There exist $\Phi: \ti\cX \ra \cX'$, a series of locally toric blow-ups, and local coordinates on open charts $\{\ti U^\al\}_\al$ of $\ti\cX$ such that
\begin{equation}\label{transh} 
(\Psi\circ\Phi)^*H_i  =  \ti H_i + \ti V_i =  
\s_{D\in \ti\cD_i} D + \s_{E\in \ti\cV_i}\,\textcolor{black}{{a_E}}E, 
\end{equation}
where $\ti H_i$, $\ti V_i$, $\ti\cD_i$ and $\ti \cV_i$ are defined for $\tilde\pi$ similarly as in (\ref{decoh0}),
and on each $\ti U=\ti U^\al$, 
$$ t_i\circ\ti\pi\ = \ \p_{j\in \ti D_i} x_j\cdot\p_{j\in \ti V_i}x_j^{a_{ij}},\ 
 \ \ D_i\sqcup V_i\ = \ \ti D_i\sqcup\ti V_i, \ \ {\rm and} \ \ \# \ti D_i\leq 1,
$$
where $a_{ij} \in \mathbb{Z}^{+}\cup\{0\}$,  $\tilde D_i$ and $\tilde V_i$ are the set of indices associated to $\tilde \cD_i$ and $\tilde \cV_i$.
Moreover, the map $\ti\pi:\ti\cX\ra B$ is flat.

\end{lemma}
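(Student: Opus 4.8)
The plan is to start from the semistable reduction $\pi':\cX'\to B$ and perform a sequence of toric blow-ups, chart by chart, to arrange that over each divisor $\{t_i=0\}$ at most one component of $(\Psi\circ\Phi)^*H_i$ is non-exceptional. Recall from the setup that on a chart $U=U^\al$ we have $t_i\circ\pi' = \prod_{j\in D_i}x_j\cdot\prod_{j\in V_i}x_j$ with all exponents equal to one, but the problem is that $\# D_i$ may exceed one: several strict-transform components $\{x_j=0\}$, $j\in D_i$, can simultaneously lie over $\{t_i=0\}$ (this happens exactly when a non-exceptional component $D\in\cD_i$ meets itself, or when distinct such components cross). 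The strategy is: whenever $\#D_i\geq 2$ on some chart, say $j,k\in D_i$, blow up the toric stratum $\{x_j=x_k=0\}$. In local toric coordinates this replaces $x_jx_k$ by $x_j'x_k'(x_j')$ on one chart (so $x_j x_k = x_j'^2 x_k'$ in one chart, $x_j' x_k'^2$ in the other), i.e. it separates the two non-exceptional divisors and produces a new exceptional divisor with multiplicity $2$, which goes into $\ti\cV_i$ with $a_E=2$. Iterating and using that each blow-up strictly decreases the number of non-exceptional components meeting a given fiber stratum (or some well-founded complexity measure built from the $\#D_i$ on all charts), the process terminates with $\#\ti D_i\leq 1$ on every chart.

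The key steps, in order, are: (1) make the statement into a combinatorial statement about the monomials $t_i\circ\pi'$ on each toric chart, recording the pair $(\ti D_i,\ti V_i)$ of index sets and the exponents $a_{ij}$; (2) define a complexity/termination measure — e.g.\ $\sum_\al\sum_i \binom{\#D_i^\al}{2}$, or the lexicographically ordered vector of the $\#D_i^\al$ — and check that the elementary toric blow-up along $\{x_j=x_k=0\}$ for $j,k$ non-exceptional strictly decreases it while not creating any new non-exceptional components (the new exceptional divisor is $\Phi$-exceptional by construction); (3) verify that the resulting $\Phi$ is a composition of \emph{locally toric} blow-ups and that the charts $\ti U^\al$ inherit toric coordinates in which $t_i\circ\ti\pi = \prod_{j\in\ti D_i}x_j\cdot\prod_{j\in\ti V_i}x_j^{a_{ij}}$; (4) read off the global divisorial identity \eqref{transh} by summing the local data, noting that the strict transforms assemble to $\ti H_i = \sum_{D\in\ti\cD_i}D$ and the exceptional contributions to $\ti V_i = \sum_{E\in\ti\cV_i}a_E E$, with $a_E\geq 1$; (5) check flatness of $\ti\pi:\ti\cX\to B$ — since $\ti\cX$ is smooth (it is obtained from the smooth $\cX'$ by blow-ups along smooth centers), $B$ is smooth, and the fibers of $\ti\pi$ all have the expected dimension $n$ (the toric blow-up centers are contained in fibers, so no fiber component is created of larger dimension), flatness follows from miracle flatness / ``smooth source, smooth target, equidimensional fibers.''

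The main obstacle I expect is Step (2)–(3): making the local toric blow-ups \emph{globally consistent} so that they patch to a genuine morphism $\Phi:\ti\cX\to\cX'$ rather than just a formal chart-by-chart procedure. The subtlety is that a non-exceptional component $D$ meeting itself corresponds to two indices $j,k$ with $\nu(j)=\nu(k)=D$ on the same chart, so the blow-up center $\{x_j=x_k=0\}$ is a well-defined subscheme of $\cX'$ (a component of the singular locus of $D$, or of $D\cap D'$), and one must blow up \emph{that} subvariety globally — then check the effect is toric on every chart it meets. Organizing the induction so that these centers are canonically defined (e.g.\ ordered by the combinatorics of the SNC configuration, as in Hironaka-type or toroidal resolution arguments) and confirming termination globally rather than chart-locally is the delicate point; once that bookkeeping is set up correctly, each individual blow-up computation is the routine toric identity $x_jx_k\mapsto$ (monomial with one squared variable) recorded above. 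I would handle this by invoking the toroidal/combinatorial resolution framework (as in \cite{ALT} and its antecedents) to subdivide the relevant cones so that each $t_i\circ\ti\pi$ becomes a monomial with at most one free (multiplicity-one, non-exceptional) coordinate, which is exactly the assertion $\#\ti D_i\leq 1$.
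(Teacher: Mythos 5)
Your overall strategy---reduce $\#D_i$ chart by chart via locally toric blow-ups, record the exponents, and deduce flatness of $\ti\pi$ from smoothness of source and target plus equidimensionality (miracle flatness)---is the same as the paper's, and your flatness step and exponent bookkeeping are fine. However, the obstacle you flag in your steps (2)--(3) is a genuine gap, and the fix you sketch does not work as stated. You propose to blow up the pairwise loci $\{x_j=x_k=0\}$, globally interpreted as components of $D\cap D'$ or of the self-intersection locus of a component $D$. But the components of $\Psi^*H_i$ are simple normal crossing only in the sense that they may intersect themselves, so these pairwise loci need not be smooth: at a point where three or more non-exceptional branches meet, the pairwise intersection locus is itself a crossing configuration, hence your center is singular there, the blow-up is not a smooth (locally toric) blow-up, and the local identity $x_jx_k\mapsto (x_j')^2x_k'$ no longer describes the global morphism. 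Appealing to ``Hironaka-type or toroidal resolution'' to order the centers is precisely the point that needs to be made concrete, and your plan leaves it open.

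The paper closes this with one specific device that is missing from your proposal: fix $i$, choose a chart $\alpha$ on which $\#D_i^\alpha$ is \emph{maximal}, and blow up the full intersection $\cap_{j\in D_i^\alpha}\nu(j)$ of all non-exceptional branches on that chart. Maximality guarantees that no deeper non-exceptional stratum exists anywhere, so this intersection is a globally well-defined \emph{smooth} center (a deepest stratum), and the blow-up is an honest global morphism which in a standard chart is the substitution $x_{j_1}\mapsto x_{j_1}$, $x_{j_p}\mapsto x_{j_1}x_{j_p}$ for $p>1$; this makes $x_{j_1}$ exceptional (it moves from $\ti D_i$ to $\ti V_i$ with the appropriate multiplicity) and reduces $\#D_i$ by one, so the induction on $\max_\alpha \#D_i^\alpha$ terminates. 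In other words, ``blow up deepest strata first'' is not an optional refinement: it is what makes the centers smooth and the chart-by-chart toric computation globally consistent, and without it your complexity-measure induction has no well-defined sequence of blow-ups to run on.
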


\begin{proof}

 We use induction: Fix $i$ suppose there exists $\al$ such that in $U=U^\al$ we have  $\# D_i^\al>1$.  Choose $\al$ such that $\# D_i^\al$ is maximal.   Then we blow up the smooth variety $\cap_{j\in D_i^\al}\, \n(j)$ (smoothness is a consequence of the fact that $\# D_i^\al$ is maximal).  In other words,
if $D_i=D_i^\al=\{j_i,...,j_m\}$ then we make the changes of variables  of the form
$$ x_{j_1}\mapsto x_{j_1}\ \ {\rm and} \ \ x_{j_p}\mapsto x_{j_1}x_{j_p}\ \ {\rm if} \ \ p>1
$$

In these new coordinates, $j_1\notin \ti D_i$. Instead, $j_1\in \ti V_i$. Thus we have reduced $\# D_i$ by one. Continuing in this fashion we prove the first part of the lemma. To see that $\ti\pi:\ti\cX\ra B'$ is flat we observe that both $\ti \cX$ and $B'$ are smooth and $\ti\pi$ is equi-dimensional. The `miracle flatness theorem' (cf. 26.2.11. of \cite{V}) implies $\ti\pi$ is flat.   \end{proof}

We remark that in Lemma \ref{dscr}, $\n(j)\sub\ti \cX$ is in the strict transform of $H_i$ if and only if $j\in \ti D_i$. In other words, in the local coordinate chart $\ti U\sub\ti \cX$ at most one component of the total transform of $H_i$ is non-exceptional.  Lemma \ref{dscr} is an improvement of the procedure in \cite{SSW} and helps to simplify future calculations.

Finally, we recall the following adjunction lemma proved in \cite{SSW}.

\begin{lemma} There exist $a_k\leq 1$ and $b_l\leq 0$ such that
\begin{equation}\label{discrep2}
 K_{\ti \cX}+\s_{i=1}^d\ti H_i  =  
(\Psi\circ\Phi)^* \left(K_\cX+\s_{i=1}^d H_i \right) - \s_{E_k\in\ti\cV} a_kE_k 
- \s_{F_l\in \tilde \cH} b_l F_l .
\end{equation}

\end{lemma}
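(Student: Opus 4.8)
The plan is to deduce the adjunction formula~(\ref{discrep2}) from the standard adjunction/discrepancy computation for log resolutions, keeping careful track of which components are vertical versus horizontal. First I would recall that $\Psi\circ\Phi:\ti\cX\ra\cX$ is a projective birational morphism of normal varieties with $\cX$ having at worst the singularities of a stable family (so $K_\cX$ is $\Q$-Cartier and the pair $(\cX,\sum_i H_i)$ is log canonical — or at least dlt — along the generic point of each $H_i$, since the $H_i$ are reduced fibre components pulled back from a snc divisor on $B$). Writing $g=\Psi\circ\Phi$, the general ramification formula gives
\begin{equation}\label{ra-gen}
K_{\ti\cX} = g^*\left(K_\cX+\s_i H_i\right) - g^{-1}_*\left(\s_i H_i\right) + \s_k c_k E_k,
\end{equation}
where the $E_k$ run over all $g$-exceptional prime divisors and the $c_k$ are the associated discrepancy-type coefficients. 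The strict transform $g^{-1}_*(\sum_i H_i)$ is exactly $\sum_i \ti H_i$ by the definitions in~(\ref{decoh0}) and~(\ref{transh}), so after moving that term to the left we obtain
\begin{equation}\label{raa}
K_{\ti\cX}+\s_i\ti H_i = g^*\left(K_\cX+\s_i H_i\right) + \s_k c_k E_k,
\end{equation}
and it remains to identify the exceptional divisors and to bound the signs of the $c_k$.

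Next I would split the exceptional set of $g$ according to~(\ref{badset}): every $g$-exceptional prime divisor is either vertical (maps into a proper subvariety of $B$, i.e.\ lies in $\ti\cV$) or horizontal (dominates $B$, i.e.\ lies in $\ti\cH$). Rename $-c_k=a_k$ for $E_k\in\ti\cV$ and $-c_l=b_l$ for $F_l\in\ti\cH$; this turns~(\ref{raa}) into precisely~(\ref{discrep2}). The substance of the lemma is then the two inequalities $a_k\le 1$ and $b_l\le 0$. For the vertical divisors: since each $E_k\in\cV_i$ appears with coefficient $\geq 1$ in $g^*H_i$ (it is a component of the SNC pullback, cf.~(\ref{transh}) and the local monomial description $t_i\circ\ti\pi=\prod_{j\in\ti D_i}x_j\prod_{j\in\ti V_i}x_j^{a_{ij}}$ with $a_{ij}\geq 1$), a local computation in the toric charts of Lemma~\ref{dscr} — or the standard fact that for a reduced SNC divisor $D'$ on a smooth variety mapping to a divisor via a toric morphism, the log discrepancy of $(\ti\cX,\ti H)$ is nonnegative — forces the discrepancy coefficient $c_k = (\text{log discrepancy})-1-(\text{coeff of }E_k\text{ in }g^*\sum H_i \text{ minus its coeff in }\sum\ti H_i)$ to satisfy $c_k\geq -1$, i.e.\ $a_k\le 1$. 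Concretely, because $(\cX,\sum H_i)$ is log canonical (it is étale-locally a product of a stable fibre with a normal crossing base, the latter being the total transform of a snc divisor), all log discrepancies of the log resolution $g$ are $\geq 0$, which is exactly $a_k\le 1$. For the horizontal divisors $F_l$: these are exceptional over $\cX$ but dominate $B$, so restricting to a general fibre $\cX_t$ ($t\in B^\circ$) they become exceptional divisors of a resolution of the \emph{smooth} variety $\cX_t$ (or of a canonical model fibre), and over a smooth variety every exceptional discrepancy is $\geq 0$; dually, since $F_l$ does not appear in $g^*\sum H_i$ at all (it is horizontal, while $\sum H_i$ is vertical), the coefficient $b_l=-c_l$ with $c_l\geq 0$, giving $b_l\le 0$. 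One should also note $b_l$ could be strictly negative when the generic fibre is only a canonical model rather than smooth, which is why the bound is $b_l\le 0$ and not $b_l<0$ or $b_l=$ something sharper.

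The main obstacle I anticipate is the vertical bound $a_k\le 1$: one must be sure that the pair $(\cX,\sum_{i=1}^d H_i)$ is genuinely log canonical (not merely that $\cX$ has semi-log-canonical fibres), which requires combining the slc/lc condition on the fibres with the fact that $t_i\circ\pi$ cuts out a reduced snc divisor on $B$ so that inversion of adjunction applies; this is where one invokes that $\pi$ came from semistable reduction. The bookkeeping of which $E_k$ lie in $\cV$ versus $\cH$, and matching the strict transform term with $\sum\ti H_i$ using~(\ref{transh}), is routine given Lemma~\ref{dscr}. Since this lemma is stated as already proved in~\cite{SSW}, the expected writeup is short: cite~(\ref{raa}) as the ramification formula, partition the exceptional divisors via~(\ref{badset}), and quote the log-canonicity of $(\cX,\sum H_i)$ together with smoothness of $\ti\cX$ to get the two sign bounds.
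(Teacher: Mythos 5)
First, a structural point: the paper does not actually prove this lemma --- it is recalled verbatim from \cite{SSW} --- so there is no internal argument to measure your sketch against; what follows judges it on its own terms. Your skeleton is the natural discrepancy-theoretic one and most of it is sound: since $K_{\cX/B}$ is $\Q$-Cartier (Definition \ref{stabfib}), $B$ is a smooth ball and $H_i=\pi^*\{t_i=0\}$ is Cartier (and reduced, the fibres being slc), the class $K_\cX+\s_{i=1}^d H_i$ is $\Q$-Cartier, so the rearranged ramification formula $K_{\ti\cX}+\s_i\ti H_i=(\Psi\circ\Phi)^*(K_\cX+\s_i H_i)+\s_E a(E)\,E$, summed over $(\Psi\circ\Phi)$-exceptional prime divisors, is legitimate, and the identification of the strict transform with $\s_i\ti H_i$ is exactly (\ref{decoh0})--(\ref{transh}). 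Your horizontal bound is correct and can be phrased more directly than by restricting to a general fibre: if $F_l$ dominates $B$, the generic point of its centre lies in $\cX^\circ=\pi^{-1}(B^\circ)$, where $\cX$ is smooth and through which no $H_i$ passes, so $F_l$ has coefficient $0$ in $(\Psi\circ\Phi)^*\s_i H_i$ and $a(F_l;\cX,\s_i H_i)=a(F_l;\cX)\ge 1$, giving $b_l\le 0$ (indeed $\le -1$).

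The genuine gap is the vertical bound. The inequality $a_k\le 1$ is precisely the assertion that $(\cX,\s_{i=1}^d H_i)$ is log canonical at the centres of the vertical exceptional divisors, and neither justification you offer establishes this: a stable family is not \'etale-locally a product of a fibre with the base (that holds only for locally trivial families, and if it were true the lemma would be immediate), and semistable reduction cannot be ``where one invokes'' anything, since log canonicity of the pair on $\cX$ is a property of $\cX$ alone, upstream of $\Psi$ and $\Phi$. What is actually needed is the inversion-of-adjunction package for locally stable families: over a reduced (here smooth) base, flatness, slc fibres and $\Q$-Cartier $K_{\cX/B}$ make the family locally stable, and a locally stable family over a smooth base has the property that $(\cX,\pi^*D)$ is semi-log canonical for every reduced snc divisor $D\sub B$ (Kawakita-type inversion of adjunction; see Koll\'ar \cite{K2}); applied to $D=\s_i\{t_i=0\}$ this yields all discrepancies $\ge -1$, i.e.\ $a_k\le 1$. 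Two smaller points: after the finite base change $B'\ra B$ the fibre product need not be normal, so one must pass to its normalization before writing discrepancies (and check that slc fibres and $\Q$-Cartierness persist); and you should say explicitly that $H_i$ is reduced (fibres of a stable family are reduced), since otherwise the strict-transform bookkeeping in your formula changes. With the correct citation substituted for the two faulty justifications, your argument closes; as written, the key inequality is asserted rather than proved.
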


\section{The $C^0$-estimate for the K\"ahler-Einstein potential}

In this section, we will prove a sharp $C^0$-estimate for the K\"ahler-Einstein potential. We will follow the notations in section 2 and  replace $B$ (or $B'$) by $B$,  a Euclidean ball in $\mathbb{C}^d$.  Let $\eta_0,...,\eta_M$ be a basis for the pluricanonical system $|mK_{\cX/B}|$ for some sufficiently large $m\in \mathbb{Z}$ so that $mK_{\cX/B}$ is globally generated. Let
\begin{equation} \label{defforchi}
 \O\ = \ \left(\s_{j=0}^M |\eta_j|^2\right)^{1/m}\ \ {\rm and}\ \ \chi\ = \ \ddbar\log\O.
\end{equation}

We can immediately translate the algebraic adjunction formula (\ref{discrep2}) into the following analytic local formula with coordinates on $\ti\cX$. 
\begin{lemma} There exists $C>0$ such that on each open chart $\ti U= \ti U^\al$, 

\begin{equation}\label{den}
 \frac{ (\sqrt{-1})^{n+d}(\Psi\circ\Phi)^*\O_t \wedge_{i=1}^d 
 \left( \sqrt{-1} dt_i \wedge d\bar t_i \right) }{ |t|^2} \leq  
{C\wedge_{k=1}^{n+d} \left( \sqrt{-1} dx_k\wedge d\bar x_k \right)\over \p_{x_j\in \ti D}|x_j|^2\p_{x_j\in \ti V}|x_j|^{2a_j}},
\end{equation}
where $\{x_j=0\}$ corresponds to the divisor in $\ti \cD= \cup_i \ti \cD_i $ or $\ti\cV= \cup_i \cV_i$. 
\end{lemma}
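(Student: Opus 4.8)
The plan is to unwind both sides of the claimed inequality locally on a chart $\ti U = \ti U^\al$ and reduce it to the algebraic adjunction formula \eqref{discrep2}. First I would recall that $\O$ is, up to the smooth factor coming from the choice of basis $\eta_0,\dots,\eta_M$ of $|mK_{\cX/B}|$, a singular hermitian metric on $-K_{\cX/B}$; equivalently, $\O_t^{-1}$ is a (singular) volume form on the fibres, and $\O_t \wedge \bigwedge_i (\sqrt{-1}\,dt_i\wedge d\bar t_i)$ is, up to a bounded factor, a singular volume form on the total space $\cX$ representing (minus) $K_\cX$. Pulling back under $\Psi\circ\Phi$ and using \eqref{discrep2}, the divisor $K_{\ti\cX} + \s_i \ti H_i - (\Psi\circ\Phi)^*(K_\cX + \s_i H_i)$ equals $-\s_k a_k E_k - \s_l b_l F_l$ with $a_k \le 1$, $b_l \le 0$. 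So after dividing by $|t|^2 = \p_i |t_i\circ\pi|^2$ — which on $\ti U$ equals $\p_i \p_{j\in\ti D_i}|x_j|^2 \cdot \p_i\p_{j\in\ti V_i}|x_j|^{2a_{ij}}$ by Lemma \ref{dscr} — the vertical $\ti D$-divisors and the genuinely vertical exceptional divisors in $\ti\cV$ get the exponents displayed on the right-hand side, while the corrections $-\s_k a_k E_k - \s_l b_l F_l$ only improve the estimate because $a_k \le 1$ and $b_l\le 0$ push the exponents of the pole down (or turn them into zeros of the numerator), and the horizontal exceptional divisors in $\ti\cH$ with $b_l\le 0$ likewise contribute no pole.

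Concretely, the key steps, in order, are: (1) write $(\Psi\circ\Phi)^*\O_t \wedge \bigwedge_i(\sqrt{-1}\,dt_i\wedge d\bar t_i)$ in the coordinates $x_1,\dots,x_{n+d}$ on $\ti U$ as $\frac{e^{\psi}}{\p_{j}|x_j|^{2c_j}}\bigwedge_k(\sqrt{-1}\,dx_k\wedge d\bar x_k)$, where $\psi$ is a bounded function (coming from the smooth Hermitian data and from the regular part $R$ of the index set, away from the divisors) and the exponents $c_j$ are read off from \eqref{discrep2}; (2) identify $|t|^2$ on $\ti U$ via the toric monomial description of $t_i\circ\ti\pi$ from Lemma \ref{dscr}; (3) take the quotient and check divisor-by-divisor that the resulting exponent of $|x_j|$ is $\le$ the target exponent ($2$ if $\{x_j=0\}\in\ti\cD$, $2a_j$ if $\{x_j=0\}\in\ti\cV$, and $\le 0$ hence no pole if $\{x_j=0\}$ is a horizontal exceptional divisor of $\ti\cH$); (4) absorb $e^\psi$ and all remaining bounded factors into the constant $C$, using compactness of $\overline{\ti U}$ and finiteness of the cover $\{\ti U^\al\}$ to get a uniform $C$.

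The main obstacle I anticipate is purely bookkeeping: keeping the three sources of exponents straight — the pole of $\O_t$ along the fibrewise non-klt locus, the pole introduced by dividing by $|t|^2$, and the discrepancy corrections from adjunction — and verifying that after all cancellations the horizontal exceptional divisors $F_l\in\ti\cH$ (which do \emph{not} appear on the right-hand side at all) really contribute no pole. This is exactly where $b_l \le 0$ in the adjunction Lemma is used: the would-be pole of $(\Psi\circ\Phi)^*\O_t$ along $F_l$ is cancelled (in fact over-cancelled) by the zero $|s_{F_l}|^{-2b_l}$, $-b_l\ge 0$, coming from the pullback of $K_\cX + \s H_i$, so $F_l$ drops out. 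I would also need the elementary fact that $\#\ti D_i \le 1$ on each chart (from Lemma \ref{dscr}) so that the strict-transform divisors enter $|t|^2$ with multiplicity exactly one, matching the exponent $2$ in the denominator of the right-hand side. Once these matches are confirmed divisor-by-divisor, the inequality \eqref{den} follows immediately, with $C$ independent of $\al$.
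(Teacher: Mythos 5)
Your proposal is correct and follows essentially the same route the paper intends: the paper states this lemma without further argument as an ``immediate translation'' of the adjunction formula (\ref{discrep2}), and your write-up is exactly that translation — interpreting the left side as the singular volume form attached to $K_\cX+\sum_i H_i$, pulling back via $\Psi\circ\Phi$, reading off the exponents from the coefficients $a_k\le 1$, $b_l\le 0$ together with the monomial description of $t_i\circ\ti\pi$ from Lemma \ref{dscr}, and absorbing the bounded factors into $C$ using the finiteness of the chart cover. The only cosmetic remark is that the $\ti\cV$-exponents in (\ref{den}) are matched exactly by the discrepancy coefficients (so $a_k\le 1$ is not needed for this lemma itself, only later when $|x_j|^{2a_j}$ is replaced by $|x_j|^2$), while $b_l\le 0$ is, as you say, precisely what removes the horizontal exceptional divisors from the right-hand side.
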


For $t\in B$,  we let $\varphi_t$ be the solution to the following complex Monge-Ampere equation
\begin{equation}\label{keqn}
 (\chi_t+\ddbar\varphi_t)^n\ = \ e^{\varphi_t}\O_t,
\end{equation}
where $\chi_t$ and $\O_t$ are the restrictions of $\chi$ and $\O$ to $\cX_t=
\pi^{-1}(t)$.  By \cite{S}, equation (\ref{keqn}) admits a unique solution $\varphi_t$ with vanishing Lelong number along the semi-log and log locus of $\cX_t$ for each $t\in B$. 
For $t\in B$, $\o_t=\chi_t+ \ddbar \varphi_t$ is the K\"ahler-Einstein metric on $\cX_t$  satisfying
$$\textnormal{Ric} (\o_t) = - \o_t,$$
including those $t$ for which  $\cX_t$ is a singular semi-log canonical model. 
We define $\cX^{\textnormal{reg}}$ be the union of smooth points of $\cX_t$ for all $t\in B$ and $\varphi \in L^{\infty}_{loc}(\cX^{\textnormal{reg}})$ such that
$$\varphi|_{\cX_t} = \varphi_t. $$
It is proved in \cite{SSW} that $\varphi$ extends globally to $\cX$ in  $\textnormal{PSH}(\cX, \chi)$ with vanishing Lelong number. In particular, $\varphi$ is bounded above,  and locally bounded below away from the non-klt locus of $\cX_t$ for all $t\in B$. In fact, $\chi+\ddbar \varphi$ is the curvature of the relative canonical line bundle $K_{\cX/B}$.  

Let $\cE$ be any divisor of $\ti\cX$   containing the exceptional locus of $\Psi\circ\Phi$. Theorem 4.7 of \cite{ALT} says that we may choose  $\Psi$ in (\ref{diag}) to be an isomorphism over any open subeset of $B$ for which $\pi$ is smooth (in particular in (\ref{badset}) we have $\cH=\emptyset$). Thus, we may choose $\cE\sub\ti\cX$ in such a way
that  

\be\label{Echoice}
B\backslash B^\circ\sub \ti\pi(\cE)\ \ {\rm and}\ \  \ti\pi(\cE)\sub B\ \ 
{\rm has\ codimension\ one}
\ee

\v

We let $\sigma_\cE$ be a defining section of $\cE$ and $h_\cE$ be a smooth hermitian metric on the line bundle on $\tilde{\cX}$ associated to $\cE$. Then the function $\log |\sigma_\cE|^2_{h_\cE}$ is defined on $\tilde\cX$ and by abusing the notations, we identify it with its push-forward onto $\cX$. The following is the main result of this section and it is a sharp improvement of the $C^0$-estimate in \cite{S, SSW}.

\begin{proposition}\label{C0} For any $\epsilon>0$, there exists $C_\epsilon>0$ such that on $\cX$, we have
\begin{equation}\label{mainest}
 \varphi \ \geq \ -(2n+\e)\log \left(-\log|\si_{\cE}|^2_{h_\cE} \right)\ - \ C_\e.
\end{equation}

\end{proposition}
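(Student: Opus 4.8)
The plan is to establish the lower bound \eqref{mainest} by a Monge-Amp\`ere comparison argument on the total space, working in the local coordinate charts $\ti U^\al$ provided by Lemma \ref{dscr} and pushing forward to $\cX$. First I would recall the structure: on $\cX^{\rm reg}$ we have the family of Monge-Amp\`ere equations $(\chi_t + \ddbar\varphi_t)^n = e^{\varphi_t}\O_t$, and $\varphi\in \PSH(\cX,\chi)$ is globally bounded above with vanishing Lelong number, but the lower bound degenerates near the non-klt locus $\cE_{slc}$. The idea is to construct an explicit $\chi$-plurisubharmonic barrier from below of the form $\psi = -(2n+\e)\log(-\log|\si_\cE|^2_{h_\cE}) - C_\e$ and show $\varphi \geq \psi$ by the maximum principle applied fiberwise. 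The key computational input is the density estimate \eqref{den}: up to a bounded factor, $(\Psi\circ\Phi)^*\O_t$ (divided by $|t|^2$ and wedged with $\wedge dt_i\wedge d\bar t_i$) is bounded by $\prod_{x_j\in\ti D}|x_j|^{-2}\prod_{x_j\in\ti V}|x_j|^{-2a_j}\,\wedge dx_k\wedge d\bar x_k$. This precisely identifies the pole order of $\O_t$ along the exceptional and boundary divisors.

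The heart of the argument is the following: on a fixed chart, after restricting to a fiber $\cX_t$, the measure $\O_t$ behaves near a divisor $\{x_j=0\}$ like $|x_j|^{-2}(\text{or }|x_j|^{-2a_j})\,|dx_j|^2$ times a smooth measure in the transverse directions, which is a measure whose potential has at worst logarithmic singularities $\sim -(2n+\e)\log(-\log|x_j|^2)$ once one accounts for the codimension-$\leq n$ count of such divisors through a point and the $e^{\varphi_t}$ factor. I would make this precise by choosing, as comparison function, (a local regularization of) $-(2n+\e)\log(-\log|\si_\cE|^2_{h_\cE})$, noting that for small $\delta>0$ the function $u_\delta = -(2n+\e)\log(-\log(\delta|\si_\cE|^2))$ is $\chi$-psh (after absorbing constants, using that $-\log(-\log)$ is concave increasing and $|\si_\cE|^2_{h_\cE}$ is small), and that its Monge-Amp\`ere mass dominates $e^{u_\delta}\O_t$: the exponent $2n+\e$ is chosen exactly so that $(\chi_t + \ddbar u_\delta)^n$ produces $n$ factors each contributing a $(-\log|x_j|^2)^{-1-\e/(2n)}$-type gain, beating the $|x_j|^{-2}$ pole in $\O_t$ when multiplied against $e^{u_\delta}\sim(-\log|x_j|^2)^{-(2n+\e)}$. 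Then the standard comparison principle for complex Monge-Amp\`ere equations (in the form used in \cite{S} or \cite{SSW} for equations with singular right-hand sides, valid in $\PSH\cap L^\infty_{loc}$ on $\cX_t^{\rm reg}$ with controlled boundary behavior) gives $\varphi_t \geq u_\delta - C$ on each fiber, uniformly, and letting $\delta\to$ its admissible value and patching the finitely many charts yields \eqref{mainest} on all of $\cX$.

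The main obstacle I anticipate is twofold. First, making the comparison principle rigorous near $\cE_{slc}$: the potential $\varphi_t$ tends to $-\infty$ there, the reference form $\chi_t$ is degenerate, and one cannot naively apply the maximum principle on a noncompact incomplete region; the standard fix is to use the Lelong-number-zero property and a family of cutoff/auxiliary functions (e.g. $\epsilon\log|\si_\cE|^2$ corrections as in \cite{SSW}) to push the contact set into the interior, then let $\epsilon\to 0$ — but getting the constant $C_\e$ genuinely \emph{uniform} in $t\in B$, including over singular fibers, requires care with the dependence of the covering $\{\ti U^\al\}$ and the $a_j$ on the geometry, which is where \eqref{den} and the uniformity of the finitely many families (Matsusaka's theorem, as invoked after Theorem \ref{main2}) must be used. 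Second, the sharp exponent $2n+\e$ rather than a larger constant: one must verify the Monge-Amp\`ere computation for the barrier carefully, since the naive bound would give a worse constant; the gain comes from distributing the $n$ powers of $\ddbar(-\log(-\log))$ optimally against the at-most-$n$ polar divisors passing through any point, combined with the extra $e^{\varphi_t}$ decay. I would organize this as a local lemma (barrier construction and its Monge-Amp\`ere lower bound on one chart) followed by the global comparison and patching.
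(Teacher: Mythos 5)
Your overall strategy -- a fiberwise pointwise estimate at an extremal point, fed by the density bound (\ref{den}), with uniformity in $t$ coming from a barrier built on the total space -- is the same as the paper's, and your second anticipated obstacle (justifying a maximum/comparison principle on fibers where $\varphi_t\to-\infty$) is resolved in the paper more simply than you expect: one subtracts the barrier $f_\epsilon$ itself, which tends to $-\infty$ along the chosen divisor, so $\varphi_t-f_{t,\epsilon}$ attains an \emph{interior} minimum on each smooth fiber and one only needs $\ddbar\varphi_{t,\epsilon}\geq 0$ there; no comparison principle near $\cE_{slc}$ and no Matsusaka-type finiteness enters, only Kodaira's lemma on $\ti\cX$ and (\ref{den}).

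The genuine gap is in your key computational claim. The barrier $u_\delta=-(2n+\e)\log\bigl(-\log(\delta|\si_\cE|^2_{h_\cE})\bigr)$ does \emph{not} satisfy $(\chi_t+\ddbar u_\delta)^n\geq c\,e^{u_\delta}\O_t$. Writing $L=-\log(\delta|\si_\cE|^2_{h_\cE})$, one has $\ddbar u_\delta=(2n+\e)\bigl(L^{-2}\sqrt{-1}\,\partial L\wedge\dbar L+L^{-1}\Theta\bigr)$ with $\Theta$ smooth, and $\sqrt{-1}\,\partial L\wedge\dbar L$ is a rank-one form. Hence $(\chi_t+\ddbar u_\delta)^n$ can absorb at most \emph{one} pole factor $|x_j|^{-2}$, whereas by (\ref{den}) the measure $\O_t$ carries the simultaneous poles $\prod_j|x_j|^{-2}$ of all components of $\ti\cD\cup\ti\cV$ through the point, and the logarithmic factor $e^{u_\delta}\sim L^{-(2n+\e)}$ cannot compensate the remaining polynomial poles. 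So the inequality fails wherever two or more polar components cross -- precisely the neck regions where the minimum may sit. (Concretely, if $|\si_\cE|^2\sim|x_1x_2|^2$ and the fiber is $\{x_1x_2=t\}$, then $u_\delta$ is essentially constant on the fiber and contributes nothing to the fiberwise Monge--Amp\`ere measure.) No enlargement of the constant in front of $\log(-\log)$ repairs this; your heuristic of ``$n$ factors each gaining $(-\log|x_j|^2)^{-1-\e/(2n)}$'' presumes a full-rank Poincar\'e-type Hessian that $u_\delta$ simply does not have. The paper's proof supplies exactly this missing ingredient by a two-level construction: an inner $\chi$-psh function $u_\e=\e\log|\si_E|^2_{h_E}-\e^2(-\log|\si_E|^2_{h_E})^{1-\delta}-3n$ (with $E$ produced by Kodaira's lemma on $\ti\cX$), whose Hessian, via (\ref{one}), dominates $c\sum_j\frac{\sqrt{-1}\,dx_j\wedge d\bar x_j}{|x_j|^2(-\log|x_j|^2)^{1+\delta}}$ transverse to every polar divisor, so that $(\chi+\ddbar u_\e)^n$ matches \emph{all} the poles of $\O_t$ at the cost of only $|u_\e|^{n(1+\delta)}$; and then an outer admissible function $H(x)=-(2n+\e)\log(-x)$, for which $e^{-H(u_\e)}H'(u_\e)^n\sim|u_\e|^{n+\e}$ absorbs that logarithmic loss once $\e>n\delta$. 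This is where the exponent $2n+\e$ actually comes from ($n$ from $H'(u_\e)^n$ plus $n(1+\delta)$ from the log factors), not from distributing powers of $\ddbar\bigl(-\log(-\log)\bigr)$ among the divisors.
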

\noindent
Remark: If we choose $\cE_1,...,\cE_p$ as in (\ref{Echoice}) such that
$  B\backslash B^\circ\ = \ \bigcap_{i=1}^p\, \ti\pi(\cE_i)
$
we obtain
\begin{equation}\label{mainest2}
 \varphi \ \geq \ -(2n+\e)\log \left(-\log\left(\s_{i=1}^p|\si_{\cE_i}|^2_{h_\cE}\right) \right)\ - \ C_\e.
\end{equation}
Note that the right side of (\ref{mainest2}) is finite on each smooth fiber.
\v
In the case where the base has dimension $1$ and the central fibre is irreducible, a  sharper estimate is achieved in \cite{DGG} with  a coefficient of $-(n+1+\e)$. However, any constant will suffice for our application to the estimates of  Weil-Petersson potentials.

Before proving Proposition \ref{mainest}, let us have a quick review on a standard procedure for constructing plurisubharmonic functions from convex functions.

\begin{definition}
A continuous convex function $H: \R^-  \rightarrow \R$ is said to be an admissible function if
$$H'>0, ~ H''>0$$
on $\R^-$. 

\end{definition}

For example, $H(x)= - \log (-x)$ is an admissible function and $H(x) = - (-x)^{1-\delta}$ is an admissible function whenever $\delta\in (0,1)$. 
 
\begin{lemma} Suppose $\phi$ is a negative plurisubharmonic function on a  domain in $\C^n$ and $H$ is an admissible function. Then $H\circ \phi$ is also plurisubharmonic.

\end{lemma}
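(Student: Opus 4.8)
The statement to prove is the last lemma: if $\phi$ is a negative plurisubharmonic function on a domain $\Omega\subset\C^n$ and $H$ is an admissible function (continuous, convex, with $H'>0$ and $H''>0$ on $\R^-$), then $H\circ\phi$ is plurisubharmonic. The plan is to reduce to the smooth case by approximation and then apply the standard chain rule for the complex Hessian. First I would note that plurisubharmonicity is a local property, so it suffices to work on a small ball. Since $\phi$ is psh, it is locally the decreasing limit of a sequence $\phi_j$ of smooth psh functions (standard mollification, after shrinking the domain slightly); moreover, since $\phi<0$, we may arrange $\phi_j<0$ as well (truncate: replace $\phi_j$ by $\min(\phi_j,-\delta_j)$ with $\delta_j\downarrow 0$, which is still psh and smooth away from the gluing locus — or more cleanly, just use that $\phi_j$ can be taken $<0$ on the slightly smaller ball because $\phi<0$ there and the $\phi_j$ decrease to $\phi$, hence are eventually negative on any compact subset, though uniform negativity over the whole ball requires the truncation; I'll use the truncation but keep it smooth via a smooth $\min$).

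For the smooth case: if $\phi$ is smooth, negative, and psh, and $H$ is $C^2$ (which an admissible function need not be a priori, so this is where a second approximation of $H$ by smooth convex functions with positive first and second derivatives enters), then a direct computation gives
\begin{equation}
\partial\dbar(H\circ\phi) = H'(\phi)\,\partial\dbar\phi + H''(\phi)\,\partial\phi\wedge\dbar\phi.
\end{equation}
Since $H'>0$ and $\partial\dbar\phi\geq 0$ (psh), the first term is a nonnegative $(1,1)$-form; since $H''>0$ and $\partial\phi\wedge\dbar\phi\geq 0$ always (it is $\sqrt{-1}\,\partial\phi\wedge\dbar\phi = \sqrt{-1}\sum \phi_{z_i}\bar\phi_{z_j}\,dz_i\wedge d\bar z_j$, a rank-$\leq 1$ nonnegative form), the second term is also nonnegative. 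Hence $\partial\dbar(H\circ\phi)\geq 0$, i.e. $H\circ\phi$ is psh.

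To remove the smoothness hypotheses: approximate $H$ on any compact subinterval $[-R,-r]\subset\R^-$ by smooth convex functions $H_k$ with $H_k'>0$, $H_k''>0$, $H_k\to H$ uniformly on that interval (mollify $H$; convexity and the strict monotonicity are preserved, and one can add a small term $\epsilon_k\cdot(\text{smooth strictly convex})$ to restore strict positivity of the second derivative if mollification only gives $\geq 0$). Then $H_k\circ\phi_j$ is psh for each $j,k$ by the smooth case. Letting $k\to\infty$, $H_k\circ\phi_j\to H\circ\phi_j$ uniformly on compacta (since $\phi_j$ takes values in a compact subinterval of $\R^-$ on each compact set, using $\phi_j<0$ and $\phi_j$ bounded below by compactness and continuity), so $H\circ\phi_j$ is psh; then $H\circ\phi_j\searrow H\circ\phi$ since $H$ is increasing and $\phi_j\searrow\phi$ — wait, I should double-check monotonicity: $\phi_j$ decreasing and $H$ increasing gives $H\circ\phi_j$ decreasing, with pointwise limit $H\circ\phi$ (continuity of $H$). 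A decreasing limit of psh functions is psh (or identically $-\infty$, excluded here since $\phi>-\infty$ on a dense set and $H$ is real-valued), completing the argument.

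The main obstacle is bookkeeping around the hypotheses "continuous" versus "$C^2$" on $H$ and ensuring the negativity of the approximants $\phi_j$ so that $H\circ\phi_j$ is well-defined with values in $\R^-$; once those are handled, the computation of $\partial\dbar(H\circ\phi)$ is the routine chain rule and the conclusion is immediate from $H'>0$, $H''>0$. A slightly slicker alternative avoiding one layer of approximation: an admissible $H$ is in particular increasing and convex, so $H$ is automatically a decreasing limit of smooth convex increasing functions on any compact subinterval, and one can also observe that $H\circ\phi$ is continuous (composition of continuous functions) and apply the characterization of psh functions via sub-mean-value inequalities combined with convexity of $H$ and the sub-mean-value property of $\phi$ — but the Hessian computation is cleanest to present, so I would go with the approximation route above.
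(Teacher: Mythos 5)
Your core identity $\ddbar(H\circ\phi)=H'(\phi)\,\ddbar\phi+\sqrt{-1}\,H''(\phi)\,\partial\phi\wedge\dbar\phi\geq 0$ is exactly the paper's proof, which consists of this one-line chain-rule computation and nothing more; so you take essentially the same approach, merely adding approximation bookkeeping (mollifying $\phi$ and $H$, decreasing limits) that the paper leaves implicit. Note also that the paper's definition of admissibility already demands $H'>0$ and $H''>0$, hence presupposes $H$ twice differentiable, so your smoothing of $H$ is not actually needed.
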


\begin{proof} Straightforward calculations show that
$$\ddbar (H \circ \phi)= H' \ddbar\phi + \sqrt{-1} H'' \partial \phi \wedge \dbar \phi \geq 0.$$

\end{proof}

  If $v=\log |z|^2$ on the unit ball in $\mathbb{C}$ and $F_\delta(x)=- (-x)^{1-\d}$ for some $0<\delta<1$, then
$$F_\delta'(x)=(1-\d)({-}x)^{-\d}, ~~F_\delta''(x)=\d(1-\d)(-x)^{-1-\d}.$$
 On  $\C^*$, we have
  $$ \ddbar\big(-(-\log |z|^2)^{1-\d}\big)\ \ = \ \d(1-\d)\cdot
{\sqrt{-1} dz\wedge d\bar z\over (-\log|z|^2)^{1+\d}\cdot |z|^2}
$$
Let  $\varphi$ be a smooth function and $v=\log (|z|^2e^{-\varphi})= \log|z|^2 - \varphi$ on the unit ball,    then  for $|z|$ sufficiently small, we have
\begin{equation}\label{one} 
\ddbar\big(-(-v)^{1-\d}\big)\ \ \geq \ {1\over 2} \d(1-\d) 
{\sqrt{-1}dz\wedge d\bar z\over (-\log\left( |z|^2 e^{-\varphi}\right) )^{1+\d}\cdot \left(|z|^2e^{-\varphi}\right) } .
\end{equation}

If $\chi+\ddbar u>0$ for some K\"ahler form $\chi$ and if $H'(u) \leq 1$ for some admissible function $H$, then
\begin{equation}
\label{two} \chi+\ddbar H(u)\ \geq  \chi+ 
H'(u)\ddbar u  \geq  H'(u)(\chi+\ddbar u).
\end{equation}

\medskip

We now begin to prove Proposition \ref{C0}.

\medskip

\noindent {\it Proof of Proposition \ref{C0}.}   We will follow the same approach in \cite{SSW} with some simplification.  Let $ {\rm Exc}(\Psi\circ\Phi)$  be the exceptional locus in the diagram (\ref{diag}) and Lemma \ref{dscr} for $\Psi\circ \Phi$.
By Kodaira's  lemma, there exists an effective $\mathbb{Q}$-Cartier divisor $E$ whose support  contains $ {\rm Exc}(\Psi\circ\Phi)$ and 
the class $(\Psi\circ \Phi)^*[\chi] - [E]$ is ample on $\ti\cX$. Furthermore, we let $\sigma_E$ be a defining section of $E$ and $h_E$ be a smooth hermitian metric on $\ti\cX$ such that
$$|\sigma_E|^2_{h_E} \leq 1/2$$
and
$$(\Psi\circ \Phi)^*\chi + \ddbar \log |\sigma_E|^2_{h_E} >0$$
is a K\"ahler form on $\ti\cX$. 

We now define for $\e>0$ sufficiently small,
\begin{equation}
u_\epsilon  = 
\e  \log|\si_E|^2_{h_E} \ - \e^2  
\left(-\log|\si_E|^{2}_{h_E}\right)^{1-\d}  -3n \leq  -3n
\end{equation}
and
\be\label{fepsilon} f_\e = H (u_\epsilon) = H \left(\e  \log|\si_E|^2_{h_E} \ - \e^2  
\left(-\log|\si_E|^{2}_{h_E}\right)^{1-\d}  -3n  \right).
\ee
It is straightforward to verify that there exist $\e_0$ and $\delta_0>0$ such that for any $0<\e<\e_0$ and $0<\delta<\delta_0$, we have 

$$u_\e \in \textnormal{PSH}(\cX, \chi)$$
since $\left(-\log|\si_E|^{2}_{h_E}\right)$ is bounded below by  $\log 2> 0$.

The admissible function $H$ will also be chosen later and it has to satisfy the condition that $H'(u_\e) \leq 1$.

We let 
$$\varphi_\e=\varphi-f_\e, ~~\varphi_{t, \epsilon}= \varphi_\epsilon|_{\cX_t}=\varphi_t - f_{t,\e}, ~~f_{t, \e} = f_\e|_{\cX_t} .$$ 
Then the complex Monge-Ampere equation (\ref{keqn}) can be rewritten as
\begin{equation}\label{modkeqn}
 (\chi_t+\ddb f_{t,\e}+ \ddbar\varphi_{t,\e})^n\ = \ e^{\varphi_{t,\e}}e^{f_{t,\e}}\O_t
\end{equation}

Note that since $\varphi_\e$ tends to $\infty$ near the exceptional locus of $\Psi\circ \Phi$ or $E$, we may conclude that $\varphi_\e$ achieves its minimum. Suppose $\varphi_{t, \e}$ achieves its minimum at $p$, we have at $p$, 
\begin{equation} 
e^{\varphi_{t,\e}}   \geq 
\frac{(\sqrt{-1})^d |t|^{-2}e^{-f_{t,\e}}
(\chi_t+\ddbar f_{t,\e})^n
   \wedge_{i=1}^d dt_i\wedge d\bar{t}_i} 
{(\sqrt{-1})^d|t|^{-2} \O_t    \wedge_{i=1}^d dt_i\wedge d\bar{t}_i} \\
%
%
\end{equation}
Applying (\ref{one}) and (\ref{two}), there exists $c_1, c_2>0$ such that 
\begin{eqnarray*}
&&{1\over H'(u_\e)}(\chi+\ddbar f_\e) \\
& \geq&  \chi+\ddbar u_\epsilon \\
&\geq& c_1\sqrt{-1} \s_{i=1}^d\left(\s_{j\in \ti D_i}dx_j\wedge d\bar x_j +  \s_{j\in\ti V_i} {dx_j\wedge d\bar x_j
\over (-\log|x_j|^2)^{1+\d} |x_j|^{2}}\ 
\right)+c_1\sqrt{-1} \s_{j=1}^{n+d} dx_j\wedge d\bar x_j \\
& \geq& c_2 \sqrt{-1}  \s_{i=1}^d\left[ \s_{j\in\ti V'_i} {dx_j\wedge d\bar x_j\over 
(-\log|x_j|^2)^{1+\d}|x_j|^{2}}\ 
\right]+ c_2\sqrt{-1} \s_{j=1}^{n+d} dx_j\wedge d\bar x_j 
\end{eqnarray*}
where $\ti V_i'\sub \ti V_i$ with equality if and only if $\ti D_i\not=\phi$, otherwise, $\ti V_i'$ equals $\ti V_i$ with one element removed.
On the other hand,
$$ {dt_i\over t_i}  =  \s_{ j\in\ti D_i} {dx_j\over x_j}\ +
\s_{j\in\ti V_i} {dx_j\over x_j}
$$
and
$$ (\chi+\ddbar f_\e)^n  \geq   (H'(u_\e))^n(\chi + \ddbar u_\epsilon)^n.
$$
Therefore,
\begin{eqnarray*}
&& \frac{(\sqrt{-1})^d e^{-f_\e}(\chi+\ddbar f_\e)^n
  \wedge_{i=1}^d dt_i\wedge d\bar t_j }{|t|^2}\\ 
&\geq &
 e^{-H(u_\e)}{(H'(u_\e))^n\over \p_{j\in \ti V'}(-\log|x_j|^2)^{1+\d}}\cdot{ (\sqrt{-1})^{n+d} \wedge_{i=1}^{n+d} dx_i \wedge d\bar x_i \over
\p_{j\in \ti D}|x_j|^2\p_{j\in \ti V}|x_j|^{2},
} 
\end{eqnarray*}
where $\ti V'=\cup_{i=1}^d \ti V_i'$,  $\ti D'=\cup_{i=1}^d \ti D_i'$.
Immediately, there exists $C>0$ such that   
\begin{eqnarray*}
 e^{\varphi_\e} &\geq& {e^{-H(u_\e)}H'(u_\e)^n\over \p_{j\in \ti V'}(-\log|x_j|^2)^{1+\d}} \\
&\geq& { e^{-H(u_\e)} (H'(u_\e))^n\over C |u_\epsilon|^{\textcolor{black}{n(1+\d)}}}.
\end{eqnarray*}
If we now let $$H(x)=- (2n+\e)\log (-x),$$ then 
$$H'(u_\epsilon) ={2n+\e\over |u_\e |}<1$$ and at the minimal point $p$, we have 

$$e^{-H(u_\e)}{H'(u_\e)^n\over |u_\e|^{n(1+\d)}}\
 \geq \
 |u_\e|^{2n+\e}{1\over |u_\e|^{2n+n\delta}}  = \frac{1}{|u_\e|^{n\delta-\e}}\geq   1,
$$
\textcolor{black}{if we choose $\e > n\delta $. }. 
Therefore there exists $C_\e>0$ such that
$$\varphi_\e \geq -C_\e$$
or equivalently,
\begin{equation}\label{4522}
\varphi\geq   f_\epsilon - C_\e.
\end{equation}

This completes the proof of Proposition \ref{mainest} by combining (\ref{4522}) and the definition of $f_\epsilon$ in (\ref{fepsilon}). $ \hfill  \Box$

\section{Integrability of $\log|\si_\cE|_{h_\cE}^2$}

Let $\si_\cE$ and $h_\cE$ be the holomorphic section and hermitian metric of $\cE$ in Proposition \ref{C0}. For $t\in B'=B\backslash Z$ We will prove a uniform estimate for the integrals of $\log |\sigma_\cE|^2_{h_\cE}$. This can be proved by local calculations from the argument in \cite{SSW}. In this section, we give an alternative proof using 
a result of Morikawi (\cite{M}).

\begin{proposition}\label{mor}  For $t\in B'=B\backslash Z$, let
\begin{equation}\label{integr}
I(t)\ =  \I_{\cX_t} (-\log|\si_\cE|_{h_\cE}^2)\,\chi_t^n. 
\end{equation}
Then $I(t)$
extends to a continuous function $B\ra\R$. In particular, $I(t)$ is uniformly bounded for all $t\in B$. 
\end{proposition}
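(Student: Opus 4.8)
The statement concerns the integral $I(t)=\int_{\cX_t}(-\log|\si_\cE|^2_{h_\cE})\,\chi_t^n$, and the goal is to show it extends continuously across $B\setminus B^\circ$. The natural strategy is to use the interpretation of $\chi$ as the curvature of the relative pluricanonical bundle $mK_{\cX/B}$ (scaled by $1/m$), so that $\chi^n|_{\cX_t}$ is, up to the normalizing factor $\left(\sum_j|\eta_j|^2\right)^{1/m}$, the push-forward of a fixed global form; more precisely, after pulling back to $\ti\cX$, the measure $\chi_t^n\wedge\bigwedge_i \sqrt{-1}dt_i\wedge d\bar t_i$ is (locally) comparable to a semipositive closed form of bidegree $(n+d,n+d)$ with bounded local potential. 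First I would transfer the whole computation to $\ti\cX$ using the diagram (\ref{diag}): write $\Theta=(\Psi\circ\Phi)^*\chi$, which is a smooth semipositive form in a fixed cohomology class $[\Theta]$ independent of $t$, and observe that $\int_{\ti\cX_t}(-\log|\si_\cE|^2_{h_\cE})\,\Theta_t^n$ equals $I(t)$ for $t\in B^\circ$ since $\Psi\circ\Phi$ is an isomorphism over $B^\circ$.

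The key point is then an integration-by-parts / fiber-integration argument in the spirit of Morikawa's result on heights of cycles: the quantity $\int_{\ti\cX_t}(-\log|\si_\cE|^2_{h_\cE})\,\Theta_t^n$ is, up to sign and constants, a local height of the fiber cycle $[\ti\cX_t]$ with respect to the metrized line bundle $(\cO(\cE),h_\cE)$ and the reference form $\Theta$. By the arithmetic (or rather complex-geometric) Bézout / projection formula, this height function of $t$ can be expressed as a finite sum of terms, each of which is (plurisubharmonic plus pluriharmonic, hence) continuous in $t$, \emph{provided} the relevant currents $\Theta^{n+1}$ and $\Theta^k\wedge[\text{curvature of }h_\cE]^{n+1-k}$ have bounded potentials and the fibers $\ti\cX_t$ do not get absorbed into the singular locus in a degenerate way. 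The decisive input here is Lemma \ref{dscr}: after the toric blow-ups, the map $\ti\pi:\ti\cX\to B$ is flat and equidimensional and $(\Psi\circ\Phi)^*H_i$ is a normal crossings divisor with controlled multiplicities $a_{ij}$, so the fibers vary in a controlled ``semistable'' way and $\si_\cE$ restricted to $\ti\cX_t$ vanishes only along a divisor with bounded multiplicity. Then Morikawa's theorem (\cite{M}) applies to give that $t\mapsto \int_{\ti\cX_t}(-\log|\si_\cE|^2_{h_\cE})\,\Theta_t^n$ is the local potential of a closed positive current on $B$, hence in particular locally bounded above; the matching lower bound comes from Proposition \ref{C0} together with the finiteness of $\int\varphi\,\chi^n$ type estimates, or alternatively directly from the fact that $-\log|\si_\cE|^2_{h_\cE}\geq c>0$ is bounded below.

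Concretely the steps are: (i) reduce to $\ti\cX$ and set $\Theta=(\Psi\circ\Phi)^*\chi$; (ii) check, using Lemma \ref{dscr} and flatness of $\ti\pi$, that the family $\{\ti\cX_t\}$ together with $(\cO(\cE),h_\cE)$ and $\Theta$ satisfies the hypotheses of Morikawa's result — this requires verifying that $\Theta$ has an appropriate quasi-positivity/boundedness property across all fibers and that $\si_\cE$ does not vanish on any fiber; (iii) invoke \cite{M} to conclude $I(t)$, a priori defined on $B^\circ$, is the restriction of a function on $B$ that is a difference of quasi-psh functions, hence extends continuously (indeed, since it is also bounded below by $(\log 2)\cdot\int_{\cX_t}\chi_t^n=(\log 2)\,c_1(K_{\cX/B})^n\cdot[\text{pt}]$, which is constant in $t$, upper semicontinuity plus lower semicontinuity force continuity); (iv) deduce uniform boundedness on the compact set $B$ (or on relatively compact subsets, which suffices since we ultimately work over the projective $\overline{\cM}_\KSB$).

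\textbf{Main obstacle.} The crux is step (ii): showing that the hypotheses of Morikawa's height-continuity statement are genuinely met by $\Theta$ — which is only \emph{semi}positive, not strictly positive or even big on every fiber — and that the subtle interaction between the normal-crossings structure of $(\Psi\circ\Phi)^*H_i$ and the support of $\cE$ does not produce an unbounded contribution along the most degenerate strata. I expect one must carefully track where $\si_\cE$ vanishes relative to the fiber $\ti\cX_t$ and possibly replace $\Theta$ by $\Theta+\epsilon\,\omega_{\ti\cX}$ for an auxiliary Kähler form, prove the estimate uniformly in $\epsilon$, and let $\epsilon\to 0$, using monotone convergence for the integrals and the uniformity of the cohomological bound $[\Theta+\epsilon\omega]^{n}\cdot[\ti\cX_t]$. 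The alternative ``local calculation from \cite{SSW}'' alluded to in the text would instead grind out the integral in the explicit toric coordinates of Lemma \ref{dscr}, bounding $\int -\log|\si_\cE|^2$ against products of $\int (-\log|x_j|^2)|x_j|^{-2a_j}(\cdots)$ over polydisc slices; the obstacle there is purely the bookkeeping of which coordinates are exceptional versus strict-transform directions and ensuring the $a_{ij}$-weights never push an integral to divergence — which is exactly what the ``$\#\ti D_i\le 1$'' normalization in Lemma \ref{dscr} is designed to guarantee.
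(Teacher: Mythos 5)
You have correctly identified the paper's main tool (the continuity of Deligne pairings, Theorem \ref{Mor}) and the right reduction: pull everything back to $\ti\cX$, take $L_0=\cO(\cE)$, $l_0=\si_\cE$, and $L_1=\cdots=L_n=(\Psi\circ\Phi)^*(mK_{\cX/B})$ with the metric whose curvature is $(\Psi\circ\Phi)^*\chi$. But your step (ii) misdiagnoses the obstacle, and the actual decisive step is missing. The problem is not the mere semipositivity of $\Theta=(\Psi\circ\Phi)^*\chi$: Theorem \ref{Mor} requires only smooth hermitian metrics on line bundles over a flat projective morphism, with no positivity hypothesis, so your proposed $\epsilon$-perturbation $\Theta+\epsilon\omega$ and limit argument is unnecessary. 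The real difficulty is that $\si_\cE$ \emph{does} vanish identically on the fibers over $\ti\pi(\cE)$ (this is forced by (\ref{Echoice})), so $\div(\si_\cE)$ is a vertical divisor and $\div(l_0)\to B$ is \emph{not} flat. Consequently the naive use of the induction formula (\ref{del}), which is what turns the fiber integral $\int_{X_t}\log|l_0|^2_{h_0}\,c_1(h_1)\wedge\cdots\wedge c_1(h_n)$ into a difference of log-norms of Deligne pairing sections, is not available for $l_0=\si_\cE$; your plan asks to ``verify that $\si_\cE$ does not vanish on any fiber,'' which cannot be verified because it is false precisely over the locus where continuity is at stake.

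The paper resolves this with Lemma \ref{mor1}(2): since $f(\div(l_0))\cap B'=\emptyset$ while the divisors of \emph{generic} sections $l_1,\dots,l_n$ of the pulled-back pluricanonical bundle are flat over $B$, one writes $c_1(h_1)=[\div(l_1)]-\ddbar\log|l_1|^2_{h_1}$ and integrates by parts, trading the vertical section $l_0$ for the horizontal one $l_1$: the integral splits into $\int_{Y_t}\log|l_0|^2_{h_0}\,c_1(h_2)\wedge\cdots$ over $Y=\div(l_1)$ (handled by induction on the fiber dimension) plus a term $\int_{X_t}\log|l_1|^2_{h_1}\,c_1(h_0)\wedge c_1(h_2)\wedge\cdots$ which falls under case (1) because $\div(l_1)\to B$ is flat, the contribution of $[\div(l_0)]$ dropping out over $B'$. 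Without this swap your argument does not close; note also that your fallback inference in step (iii) --- ``difference of quasi-psh functions, hence continuous,'' or ``upper semicontinuity plus a lower bound by a constant forces continuity'' --- is not valid as stated: continuity here comes from Moriwaki's theorem applied through (\ref{del}), not from semicontinuity considerations. So the skeleton of your approach matches the paper, but the induction/integration-by-parts device handling the non-flatness of $\div(\si_\cE)$ is the essential content and needs to be supplied.
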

The proof is an application of the following  theorem of  Morikawi (\cite{M}).

\begin{theorem}\label{Mor} Let $f_X: X\ra B$ be a flat projective morphism of algebraic varieties over  $\C$ of relative dimension $n$, let $L_0,...,L_n$ be line bundles equipped with smooth hermitian metrics $h_0,...,h_n$. Then
$\langle h_0,...,h_n\rangle $ is a continuous hermitian metric on the Deligne pairing $\langle L_0,...,L_n\rangle(X/B)\ra B$. Thus, if $l_0,...,l_n$ are generic rational sections of $L_0,...,L_n$  such that  $\cap_{j=0}^n\, \div(l_j)=\phi$, then $|\langle l_0,...,l_n\rangle|$ is a positive continuous function on $B$.
\end{theorem}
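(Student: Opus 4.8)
The plan is to prove continuity of the metric $\langle h_0,\ldots,h_n\rangle$ by induction on the relative dimension $n$, using Deligne's inductive formula and reducing the inductive step to the continuity of a single fibre integral. Continuity is local on $B$, so I would shrink $B$ and fix rational sections $l_0,\ldots,l_n$ of $L_0,\ldots,L_n$ in general position, so that the successive divisors are flat over $B$ of the expected relative dimensions and $\langle l_0,\ldots,l_n\rangle$ is a local generator of the Deligne pairing; it then suffices to show that $\log\|\langle l_0,\ldots,l_n\rangle\|_{\langle h_0,\ldots,h_n\rangle}$ extends continuously across the locus of bad fibres. For the base case $n=0$ the morphism $f_X$ is finite flat, $\langle L_0\rangle$ is the norm $N_{X/B}(L_0)$, and $\log\|\langle l_0\rangle\|(b)=\sum_{x\in X_b}\log\|l_0(x)\|_{h_0}$ is a sum over the fibre $0$-cycle; this is continuous because the cycle $X_b$ varies continuously with $b$ (flatness and continuity of symmetric functions of the roots) while $\log\|l_0\|_{h_0}$ is continuous on $X$.

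For the inductive step, put $Z=\div(l_n)$, a flat family over $B$ of relative dimension $n-1$, and write $\omega_i=c_1(L_i,h_i)$ for the Chern forms. Deligne's formula gives
$$\log\|\langle l_0,\ldots,l_n\rangle\|(b)=\log\big\|\langle l_0,\ldots,l_{n-1}\rangle|_Z\big\|(b)+\int_{X_b}\log\|l_n\|_{h_n}\,\omega_0\wedge\cdots\wedge\omega_{n-1}.$$
The first summand is the Deligne metric of the restricted pairing on the family $Z\to B$ of relative dimension $n-1$, hence continuous by the induction hypothesis. The whole problem therefore reduces to the continuity in $b$ of the fibre integral $J(b)=\int_{X_b}\log\|l_n\|_{h_n}\,\omega_0\wedge\cdots\wedge\omega_{n-1}$.

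The continuity of $J$ is the only serious obstacle. First, by multilinearity of the Deligne pairing and metric in each factor, and after twisting each $L_i$ by a fixed ample bundle carrying a positively curved metric, I would reduce to the case where every $\omega_i$ is a positive $(1,1)$-form, so that $\Phi=\omega_0\wedge\cdots\wedge\omega_{n-1}\geq0$. Upper semicontinuity of $J$ is then formal: $u_\epsilon=\tfrac12\log(\|l_n\|^2_{h_n}+\epsilon)$ is smooth and decreases to $\log\|l_n\|_{h_n}$, each $\int_{X_b}u_\epsilon\,\Phi$ is continuous (fibre integration of a continuous top-degree form along the proper flat map $f_X$), and $J$ is their decreasing limit. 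The genuine difficulty is lower semicontinuity, which is equivalent to a uniform integrability statement: the contribution $\int_{X_b\cap W}\log\|l_n\|_{h_n}\,\Phi$ over a shrinking neighbourhood $W$ of $Z$ and of the singular fibres must tend to $0$ uniformly in $b$. To establish this I would apply resolution of singularities and semistable reduction to $f_X:X\to B$, as in the setup of Section 2, so that the total space becomes smooth, $Z$ together with the discriminant becomes a simple normal crossing divisor, and $l_n$ becomes a local monomial; the fibre integral then localizes to model integrals of the form $\int(\sum_j a_j\log|x_j|+C^\infty)\wedge(\text{smooth})$ over the slices of a semistable degeneration, whose uniform $L^1$ bound and continuity in the parameter follow from elementary one-variable estimates for $\int\log|x|\cdot(\text{smooth})$. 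I expect the bookkeeping of discrepancies under the modification, and the uniformity of the tail estimate as the fibre degenerates, to be where the real work lies; flatness of $f_X$ and the normal-crossing model are precisely what prevent mass of the logarithmic singularity from escaping.

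Combining upper and lower semicontinuity yields continuity of $J$, hence of $\langle h_0,\ldots,h_n\rangle$, completing the induction. The final assertion is then immediate: if $\bigcap_{j=0}^n\div(l_j)=\emptyset$ then $\langle l_0,\ldots,l_n\rangle$ is a nowhere-vanishing holomorphic section of $\langle L_0,\ldots,L_n\rangle$, so its norm with respect to the continuous metric $\langle h_0,\ldots,h_n\rangle$ is a strictly positive continuous function on $B$.
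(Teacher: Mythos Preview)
The paper does not contain a proof of this theorem: it is stated as a result of Moriwaki and cited from the literature (the intended reference is \cite{MA}; the label \cite{M} in the surrounding text is evidently a typo, as that item points to Masur's paper). The theorem is then applied as a black box to deduce Lemma~\ref{mor1} and Proposition~\ref{mor}. There is therefore no proof in the paper against which to compare your proposal.

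For what it is worth, your outline follows the standard inductive strategy via Deligne's recursion formula~(\ref{del}), and you have correctly identified the genuine analytic content: the uniform integrability of $\log\|l_n\|_{h_n}$ against smooth forms over a degenerating family of fibres. Your plan to achieve this by passing to a semistable (or simple normal crossing) model and reducing to elementary one-variable estimates is plausible in spirit, but this step is exactly where a complete proof requires real care, and your sketch does not yet supply the needed uniform tail estimate. If you intend to write a self-contained argument, it would be worth consulting Moriwaki's original paper \cite{MA} directly rather than reconstructing it.
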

\noindent
Note that the fibers $X_t$ are all projective varieties, but $X$ and $B$ need not  be projective. 
\v
For details on the definition and properties of Deligne pairings see 
\cite{Z} and \cite{M}, but let us here recall the basic framework. If $f_X: X\ra B$ is as above, and  $L_0,...,L_n$ are hermitian line bundles on $X$, then
$\langle L_0,...,L_n\rangle(X/B)$ is a hermitian line bundle on $B$. For example, if $n=0$ then $\<L_0\>$ is the ``norm" of $L=L_0$. Thus, the fiber
of $\<L\>$ at $b\in B$ equals $\otimes_{x\in f_X^{-1}b}L_x$   and if $l$ is a rational section of $L$ then 
$\<l\>=\otimes_{x\in f_X^{-1}b}l_x$
 is a rational section of $\<L\>$  and  
$|\<l\>|_{\<h\>}=\otimes_{x\in f_X^{-1}b}|l_x|_h$. In
general, $\langle L_0,...,L_n\rangle(X/B) = \langle L_0,...,L_{n-1}\rangle(\div(l_n)/B)$, and the restriction of 
$\langle l_0,...,l_n\rangle$ to $\div(l_n)$ is just
$\langle l_0,...,l_{n-1}\rangle$. Moreover the formula for the norm of the restriction is given below by (\ref{del}). In fact these properties give an inductive characterization of the Deligne pairing.

\v
Let $f_X:X\ra B$ be as in Theorem \ref{Mor}, let $B^\circ =\{t\in B\,:\, f_X^{-1}(t)$ is smooth$\}$ and $B'\sub B^\circ$ a Zariski open subset.
We shall    need the following lemma.

\begin{lemma}\label{mor1}  Fix $l_0,...,l_n$ rational sections of $L_0,...,L_n$ such that  $\cap_{j=0}^n\, \div(l_j)=\phi$.  Assume that one of the following conditions holds
\begin{enumerate}

\item $\div(l_0)\ra B$ is flat or 

\medskip

\item $f(\div(l_0))\cap B'=\phi$ and  $\div(l_j)\ra B$ is flat for
$1\leq j\leq n$. 
\end{enumerate}
 Then the map $I': B'\ra\R$ given by 
$$t\mapsto \I_{X_t} \log|l_0|^2_{h_0}\,c_1(h_1)\wedge\cdots c_n(h_n)$$
is continuous and extends to a continuous function $B\ra\R$.

\end{lemma}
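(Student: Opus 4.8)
The plan is to reduce the statement to Morikawa's theorem (Theorem \ref{Mor}) via the inductive structure of Deligne pairings. The starting observation is that the Deligne pairing $\langle L_0,\ldots,L_n\rangle(X/B)$ carries the continuous hermitian metric $\langle h_0,\ldots,h_n\rangle$, and for a tuple $l_0,\ldots,l_n$ of rational sections with $\cap_j\div(l_j)=\phi$, the norm $|\langle l_0,\ldots,l_n\rangle|_{\langle h\rangle}$ is a continuous (indeed positive, where defined) function on $B$. The link to the integral $I'(t)$ is the explicit curvature/integral formula for the norm of a Deligne pairing restricted to one divisor: writing things inductively, on $B^\circ$ one has
\begin{equation}\label{del}
\log\,|\langle l_0,\ldots,l_n\rangle|_{\langle h\rangle}(t)\ =\ \log\,|\langle l_1,\ldots,l_n\rangle|_{\langle h\rangle}\big(\div(l_0)_t\big)\ +\ \I_{X_t}\log|l_0|^2_{h_0}\; c_1(h_1)\wedge\cdots\wedge c_n(h_n).
\end{equation}
So $I'(t)$ differs from $\log|\langle l_0,\ldots,l_n\rangle|(t)$ — which is continuous on all of $B$ by Theorem \ref{Mor} — by the term $\log|\langle l_1,\ldots,l_n\rangle|$ evaluated along the family $\div(l_0)\to B$. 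The whole game is to show that this correction term also extends continuously across $B\setminus B^\circ$.

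Under hypothesis (1), $\div(l_0)\to B$ is itself a flat projective morphism, of relative dimension $n-1$, so Theorem \ref{Mor} applies directly to the family $f_X^{-1}:\div(l_0)\to B$ with the line bundles $L_1|_{\div(l_0)},\ldots,L_n|_{\div(l_0)}$ and sections $l_1,\ldots,l_n$; this requires the intersection condition $\cap_{j=1}^n\div(l_j|_{\div(l_0)})=\cap_{j=0}^n\div(l_j)=\phi$, which holds by assumption. Hence $t\mapsto\log|\langle l_1,\ldots,l_n\rangle|(\div(l_0)_t)$ is continuous on $B$, and combined with \eqref{del} and the continuity of $\log|\langle l_0,\ldots,l_n\rangle|$ on $B$ we get that $I'$ extends continuously to $B$. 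Under hypothesis (2), the roles are reversed: $f(\div(l_0))\cap B'=\phi$ means the "boundary" term in \eqref{del} is already supported away from $B'$, so on a neighborhood of $B'$ the function $\log|\langle l_0,\ldots,l_n\rangle|(t)$ equals $I'(t)$ up to something identically handled; instead one peels off a different $l_j$ with $j\geq 1$ whose divisor is flat over $B$, applies \eqref{del} with indices permuted so that the flat divisor plays the role of $\div(l_0)$, and iterates. In either case one writes $I'$ as an alternating sum of $\log$-norms of Deligne pairings of families that are flat over $B$, each continuous on $B$ by Morikawa, plus the top pairing $\log|\langle l_0,\ldots,l_n\rangle|$ which is continuous on $B$; symmetry of the Deligne pairing in its arguments is what licenses the permutation of indices.

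The step I expect to be the main obstacle is the bookkeeping around case (2) and, more generally, making sure the intersection conditions $\cap\div(l_j)=\phi$ are preserved at each stage of the induction — restricting a section to a divisor can create new base locus if the sections are not in sufficiently general position, so one must either invoke genericity of the chosen rational sections (as in the statement of Theorem \ref{Mor}) or perturb the $l_j$ within their linear systems. A secondary subtlety is verifying that the integral $\I_{X_t}\log|l_0|^2_{h_0}\,c_1(h_1)\wedge\cdots\wedge c_n(h_n)$ genuinely converges for $t\in B'$ and equals the difference of $\log$-norms in \eqref{del} — this is standard for smooth fibers (the integrand has mild logarithmic singularities along the smooth divisor $\div(l_0)_t$, which are integrable), but it is the point where flatness of $\div(l_0)\to B$, ensuring the fibers $\div(l_0)_t$ behave well, is actually used. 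Once these points are in place, the continuity on $B$ follows formally.
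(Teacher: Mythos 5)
Your case (1) is correct and is exactly the paper's argument: peel off $l_0$ via the induction formula \eqref{del} and apply Theorem \ref{Mor} both to $X/B$ and to the flat family $\div(l_0)\ra B$ (the sign discrepancy in your version of \eqref{del} is immaterial).

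Case (2), however, has a genuine gap. Since $f(\div(l_0))\cap B'=\phi$, the image of $\div(l_0)$ lies in $B\setminus B'$, so $\div(l_0)\ra B$ is not flat and the pairing $\langle l_1,\ldots,l_n\rangle(\div(l_0)/B)$ is simply not defined; hence \eqref{del} cannot be applied with $l_0$ peeled off, and your assertion that ``$\log|\langle l_0,\ldots,l_n\rangle|(t)$ equals $I'(t)$ up to something identically handled'' has no content. Peeling off a flat $l_j$ with $j\geq 1$, as you propose, only controls integrals of the form $\int_{X_t}\log|l_j|^2_{h_j}\,\prod_{k\neq j}c_1(h_k)$, i.e.\ integrals of $\log|l_j|^2$, not of $\log|l_0|^2$; abstract symmetry of the Deligne pairing does not by itself convert one into the other. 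The missing step --- which is the entire content of the paper's proof of case (2) --- is the Poincar\'e--Lelong exchange carried out under the fiber integral: substitute $c_1(h_1)=[\div(l_1)]-\ddbar\log|l_1|^2_{h_1}$ into $I'(t)$, integrate by parts on $X_t$, apply Poincar\'e--Lelong again to $\ddbar\log|l_0|^2_{h_0}$, and use $\div(l_0)\cap X_t=\phi$ for $t\in B'$ to discard the current $[\div(l_0)]$. This gives, on $B'$ and with $Y=\div(l_1)$,
\begin{equation*}
I'(t)\;=\;\int_{Y_t}\log|l_0|^2_{h_0}\,c_1(h_2)\wedge\cdots\wedge c_1(h_n)\;+\;\int_{X_t}\log|l_1|^2_{h_1}\,c_1(h_0)\wedge c_1(h_2)\wedge\cdots\wedge c_1(h_n),
\end{equation*}
where the second term is continuous on $B$ by case (1) applied to $l_1$ (whose divisor is flat over $B$), and the first term is the same kind of integral in relative dimension $n-1$ over the flat family $Y\ra B$, handled by induction on $n$. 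Your ``iterate'' gestures at this, but without the Stokes/Poincar\'e--Lelong computation and the explicit induction on relative dimension the argument does not close; your concern about preserving the intersection conditions is reasonable but is absorbed by the genericity assumption on the sections, and is not the real difficulty.
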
 

\begin{proof} Assume $\div(l_0)\ra B$ is flat. Then the result follows from Theorem \ref{Mor} and the induction formula for Deligne pairings:

\be\label{del}
 \log|\langle l_0,...,l_n\rangle(X/B)|\ = \ \log|\langle l_1,...,l_n\rangle(\div(l_0)/B)|\ - \ \I_{X_t} \log|l_0|^2_{h_0}\,c_1(h_1)\wedge\cdots c_n(h_n)
\ee
on $B'$.
\v
Now assume $f(\div(l_0))\cap B'=\phi$ and
$\div(l_j)\ra B$ is flat for
$1\leq j\leq n$. 
 We proceed using induction on $n$.  Let $Y=\div(l_1)$. Then $f_Y:Y\ra B$ is flat with fiber dimension $n-1$. Writing $[L_j]:= \div(l_j)$ we compute, for $t\in B'$,

$$\I_{X_t} \log|l_0|^2_{h_0}\,c_1(h_1)\wedge\cdots c_n(h_n)\ = \ 
\I_{X_t} \log|l_0|^2_{h_0}\,([L_1]-\ddbar\log|l_1|^2_{h_1}))\wedge c_1(h_2)\wedge\cdots c_n(h_n)
$$

$$=\ \I_{Y_t}\log|l_0|^2_{h_0}\,c_1(h_2)\wedge\cdots c_n(h_n)\ - \ 
\I_{X_t}\log|l_1|^2([L_0]-c_1(h_0))\wedge c_1(h_2)\wedge\cdots c_n(h_n)
$$
Now the first integral is continuous by induction and the second is
continuous by part (1) of the lemma (note that $[L_0]\cap X_t=\phi$ for $t\in B'$ by assumption).

\end{proof}

\noindent {\it Proof of Proposition \ref{mor}.}  We wish to apply Lemma \ref{mor1} as follows (see (\ref{diag}) for the notation). Let $X = \ti \cX$, $L_0=O(\cE)$, $l_0=\si_\cE$, and $L_1=L_2=\cdots =L_n=(\Psi\circ\Phi)^* (mK_{\cX/B}$).  Then the hypotheses of part (2) of Lemma \ref{mor1} apply by virtue of (\ref{Echoice}).    ~~~~$\Box$

\section{Level sets of the K\"ahler-Einstein potentials}

Let $\cX\ra B$ be a stable family of canonically polarized manifolds,  where $B\sub\C^d$ is a Euclidean ball. Let
$$\hbox{$B^\circ=\{t\in B\,:\, \cX_t$ is smooth $\}$}
$$
and 
$$\cX^\circ =\pi^{-1} (B^\circ). $$
For $t\in B^\circ$, we let $\varphi_t$ be the solution of the complex Monge-Amp\`ere equation induced by the K\"ahler-Einstein equation satisfying 
$$\o_t^n=(\chi_t+\ddbar\varphi_t)^n=e^{\varphi_t}\O_t,$$
where $\chi_t$ and $\Omega_t$ are defined in (\ref{defforchi}).  The main goal of this section is to bound the volume of the level sets of the K\"ahler-Einstein potentials $\varphi_t$.

We first recall the definition of capacity in pluripotential theory.

\begin{definition} Let  $X$ be an $n$-dimensional compact K\"ahler manifold and let $\omega$ be a smooth closed nonnegative $(1,1)$-form on $X$. The capacity for a Borel subset $K$ of $X$ associated to $\omega$ is defined by

\begin{equation}
\textnormal{Cap}_\omega(K) = \sup\left\{ \int_K (\omega+ \ddbar u)^n
 ~|~ u \in \textnormal{PSH}(X, \omega),  ~ -1\leq u \leq  0\right\}. 
\end{equation}

\end{definition}

The following lemma is well-known \cite{Ko}.  
 \begin{lemma} \label{cap2} Suppose $X$ is a compact complex manifold, $\omega \geq 0$ a closed nonnegative $(1,1)$-form and $\psi\in \textnormal{PSH}(X,\o)\cap L^\infty(X)$ with $\psi\leq 0$. Then for $K\geq 1$, we have 
$$  \I_{\{\psi\leq-K\}}\,(\o+\ddbar\psi)^n\leq K^n \ {\rm Cap}_\o(\psi\leq-K)
$$
 \end{lemma}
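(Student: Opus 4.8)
The statement to prove is Lemma~\ref{cap2}: for a compact complex manifold $X$ with $\omega\geq 0$ closed and $\psi\in\mathrm{PSH}(X,\omega)\cap L^\infty(X)$, $\psi\leq 0$, and $K\geq 1$, one has $\int_{\{\psi\leq -K\}}(\omega+\ddbar\psi)^n\leq K^n\,\mathrm{Cap}_\omega(\{\psi\leq -K\})$.

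\medskip

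\noindent\textbf{Proof plan.} The plan is to compare $\psi$ on the sublevel set $\{\psi\leq -K\}$ with the truncated (rescaled) potential $u := \max(\psi/K,\,-1)$, which is an admissible competitor in the definition of $\mathrm{Cap}_\omega$. First I would observe that $u\in\mathrm{PSH}(X,\omega)$: indeed $\psi/K\in\mathrm{PSH}(X,\tfrac1K\omega)\subseteq\mathrm{PSH}(X,\omega)$ since $\tfrac1K\omega\leq\omega$ for $K\geq 1$ (here I use $\omega\geq 0$, so $\omega-\tfrac1K\omega=(1-\tfrac1K)\omega\geq 0$), and the maximum of an $\omega$-psh function with the constant $-1$ is again $\omega$-psh; moreover $-1\leq u\leq 0$. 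Next, on the open set $\{\psi<-K\}$ we have $\psi/K<-1$, hence $u\equiv -1$ there, so $(\omega+\ddbar u)^n=\omega^n$ on $\{\psi<-K\}$; while on the complementary region where $u=\psi/K$, the Monge--Amp\`ere measure scales: on $\{\psi>-K\}$, $(\omega+\ddbar u)^n=(\omega+\tfrac1K\ddbar\psi)^n=\tfrac1{K^n}(K\omega+\ddbar\psi)^n\geq\tfrac1{K^n}(\omega+\ddbar\psi)^n$ since $K\omega+\ddbar\psi=(\omega+\ddbar\psi)+(K-1)\omega$ dominates $\omega+\ddbar\psi$ as currents and both are positive (a comparison of Monge--Amp\`ere measures of ordered quasi-psh functions).

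\medskip

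Then I would chain these observations:
\begin{align*}
\int_{\{\psi\leq -K\}}(\omega+\ddbar\psi)^n
&\leq K^n\int_{\{\psi\leq -K\}}(\omega+\ddbar u)^n
\leq K^n\int_{X}(\omega+\ddbar u)^n\,\mathbf{1}_{\{u=-1\}\text{ or near it}}.
\end{align*}
More carefully: decompose $\{\psi\leq -K\}=\{\psi<-K\}\cup\{\psi=-K\}$; on $\{\psi<-K\}$ the comparison $(\omega+\ddbar\psi)^n\leq K^n(\omega+\ddbar u)^n=K^n\omega^n$ needs the fact that on a neighborhood where $\psi$ is very negative, $u$ is locally constant so its MA measure is $\omega^n$, and one must check $(\omega+\ddbar\psi)^n\leq K^n\omega^n$ pointwise there — this is where one invokes that Monge--Amp\`ere measures of $\omega$-psh functions that agree outside a compact set have equal total mass, or more simply that $\{\psi\leq -K\}\subseteq\{u\leq -1+\delta\}$ type arguments combined with the scaling above on $\{\psi>-K-\epsilon\}$. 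The cleanest route is: since $u\leq -1+\epsilon$ on $\{\psi\leq -K(1-\epsilon/1)\}$-type sets, pass to a slightly larger sublevel set, use the scaling inequality there, and let $\epsilon\to 0$. Finally, since $-1\leq u\leq 0$, the quantity $\int_{\{\psi\leq -K\}}(\omega+\ddbar u)^n$ is bounded by $\mathrm{Cap}_\omega(\{\psi\leq -K\})$ by definition (the set is Borel and $u$ is the required test function).

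\medskip

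\noindent\textbf{Main obstacle.} The delicate point is the pointwise/measure comparison $(\omega+\ddbar\psi)^n\leq K^n(\omega+\ddbar u)^n$ on the sublevel set, which is slightly subtle on the boundary $\{\psi=-K\}$ and at points where $\psi$ is not smooth; the standard fix is to work with the plurifine-open set $\{\psi<-K+\eta\}$ for small $\eta>0$, apply the locality of Monge--Amp\`ere measures on plurifine-open sets together with the scaling identity $(\omega+\ddbar u)^n=\tfrac1{K^n}(K\omega+\ddbar\psi)^n\geq\tfrac1{K^n}(\omega+\ddbar\psi)^n$ valid where $u=\psi/K$, and then take $\eta\downarrow 0$ using that $(\omega+\ddbar\psi)^n$ puts no mass on the pluripolar-type boundary or simply by monotone convergence of the integrals over shrinking neighborhoods. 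Since this is a well-known lemma from Ko{\l}odziej's work, I would cite \cite{Ko} for the technical measure-theoretic steps and present the scaling-and-truncation argument as above.
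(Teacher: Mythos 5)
Your choice of test function is the same as the paper's ($u=\max(\psi/K,-1)=K^{-1}\max(\psi,-K)$), and the final step (using $u$ as a competitor for $\mathrm{Cap}_\omega$) and the scaling inequality where $u=\psi/K$ are fine. But the central inequality in your chain, $\int_{\{\psi\leq-K\}}(\omega+\ddbar\psi)^n\leq K^n\int_{\{\psi\leq-K\}}(\omega+\ddbar u)^n$, is exactly the step you never establish, and the pointwise version you contemplate is false. On the plurifine-open set $\{\psi<-K\}$ one has $(\omega+\ddbar u)^n=\omega^n$, and there is no reason to have $(\omega+\ddbar\psi)^n\leq K^n\omega^n$ there: a bounded $\omega$-psh function with a deep truncated logarithmic well concentrates a fixed amount of Monge--Amp\`ere mass near the bottom of the well, inside $\{\psi<-K\}$, while $K^n\omega^n$ of that tiny region is negligible. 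Moreover, both of your proposed fixes appeal to the scaling identity $(\omega+\ddbar u)^n=K^{-n}(K\omega+\ddbar\psi)^n$ ``valid where $u=\psi/K$'', but that locus is $\{\psi>-K\}$, which is disjoint from (an $\eta$-enlargement of) the region $\{\psi<-K+\eta\}$ over which you integrate; so the plurifine-locality argument you sketch cannot close the gap, and deferring the rest to a citation of Ko{\l}odziej leaves out the actual content of the lemma.

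The missing idea is a balance-of-mass argument with the truncation $\psi_K:=\max(\psi,-K)$, which is how the paper proceeds. Since $\psi$ and $\psi_K$ are both bounded $\omega$-psh, their Monge--Amp\`ere measures have the same total mass $\int_X\omega^n$, and they coincide on the plurifine-open set $\{\psi>-K\}$ where $\psi=\psi_K$; hence
\begin{equation*}
\int_{\{\psi\leq-K\}}(\omega+\ddbar\psi)^n=\int_X\omega^n-\int_{\{\psi>-K\}}(\omega+\ddbar\psi_K)^n=\int_{\{\psi\leq-K\}}(\omega+\ddbar\psi_K)^n .
\end{equation*}
Only now does the scaling enter, and it does so globally: $\psi_K=Ku$ everywhere, so $\omega+\ddbar\psi_K\leq K\omega+\ddbar\psi_K=K(\omega+\ddbar u)$ (using $\omega\geq0$, $K\geq1$), whence $(\omega+\ddbar\psi_K)^n\leq K^n(\omega+\ddbar u)^n$ as measures on all of $X$, in particular on $\{\psi\leq-K\}$; the capacity bound then follows since $-1\leq u\leq 0$. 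In short, you compare $(\omega+\ddbar\psi)^n$ with $(\omega+\ddbar u)^n$ directly on the deep sublevel set, where they are not comparable; the correct route first trades $(\omega+\ddbar\psi)^n$ for $(\omega+\ddbar\psi_K)^n$ on that set via equal total masses plus locality on the complement, and only then rescales.
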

  
\begin{proof} Let $\psi_K = \max(\psi, -K)$  and $u_K= K^{-1} \psi_K$. Then $u_K \in \textnormal{PSH}(X, \o)$ and $$-1\leq u_K \leq 0.$$
Then 
\begin{eqnarray*}
\int_{\psi\leq-K}(\o+\ddbar\psi)^n &=& \int_X \o^n - \int_{\psi> -K} (\o+\ddbar\psi)^n\\
&=& \int_X \o^n - \int_{\psi> -K} (\o+\ddbar\psi_K)^n\\
&=& \int_{\psi \leq -K} (\o+\ddbar\psi_K)^n\\
&\leq& K^n  \int_{\psi \leq -K} (\o+\ddbar u_K)^n\\
&\leq& K^n \ {\rm Cap}_\o (\psi\leq-K).
\end{eqnarray*}

\end{proof}

The following lemma is proved in \cite{GZ1} (Proposition 2.6).

\begin{lemma} \label{cap1} Let $\varphi\in \textnormal{PSH}(X,\o)$ with $\varphi\leq 0$. Then
for $K\geq 1$
$$ {\rm Cap}_\o(\varphi<-K)\ \leq K^{-1} \left( \I_X(-\varphi)\o^n\ + n \int_X \o^n \right)
. 
$$

\end{lemma}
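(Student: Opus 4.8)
The statement to prove is Lemma \ref{cap1} (the Guedj--Zeriahi estimate): for $\varphi\in\PSH(X,\o)$ with $\varphi\le 0$ and $K\ge 1$,
$$
\mathrm{Cap}_\o(\varphi<-K)\ \le\ K^{-1}\left(\I_X(-\varphi)\,\o^n + n\I_X\o^n\right).
$$

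\textbf{Plan of proof.} The idea is to bound the capacity of the sublevel set $\{\varphi<-K\}$ by testing against an arbitrary competitor $u\in\PSH(X,\o)$ with $-1\le u\le 0$ and controlling $\I_{\{\varphi<-K\}}(\o+\ddbar u)^n$ using a comparison with the potential $\varphi$ itself. First I would fix such a $u$ and observe that on the open set $\{\varphi<-K\}$ we have $\varphi<-K\le -K\le Ku$ (since $u\ge -1$, so $Ku\ge -K$), hence $\varphi<-K+Ku$ is false in general — instead the right comparison is $\varphi < -K \le K(u-1) + \text{something}$; more precisely, on $\{\varphi<-K\}$ one has $\varphi+K<0\le -Ku \cdot 0$... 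Let me set it up cleanly: since $-1\le u\le 0$, the function $Ku$ satisfies $-K\le Ku\le 0$, so on $\{\varphi<-K\}$ we have $\varphi<-K\le Ku$, i.e. $\{\varphi<-K\}\subseteq\{\varphi<Ku\}$ (this inclusion may fail only where $\varphi\ge -K$, which is exactly outside our set). Thus it suffices to estimate $\I_{\{\varphi<Ku\}}(\o+\ddbar u)^n$.

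\textbf{Key steps.} The main tool is the comparison principle for the Monge--Amp\`ere operator: if $\psi_1,\psi_2\in\PSH(X,\o)\cap L^\infty$ then $\I_{\{\psi_1<\psi_2\}}(\o+\ddbar\psi_2)^n\le \I_{\{\psi_1<\psi_2\}}(\o+\ddbar\psi_1)^n$. Applying this with $\psi_1=\varphi$ (or a bounded truncation $\varphi_j=\max(\varphi,-j)$, passing to the limit at the end since $\varphi$ need not be bounded) and $\psi_2 = Ku$, I would get
$$
\I_{\{\varphi<Ku\}}(\o+\ddbar u)^n \le K^{-n}\I_{\{\varphi<Ku\}}(\o+K\ddbar u)^n \le K^{-n}\I_{\{\varphi<Ku\}}(\o+\ddbar\varphi)^n,
$$
where the first inequality uses $(\o+\ddbar(Ku))^n \ge K^n\cdot$ ... — careful: $\o+K\ddbar u = \o + \ddbar(Ku)$ only when the linear part is treated correctly; since $K\ge 1$, $\o+K\ddbar u \ge \o+\ddbar u \ge 0$ is \emph{not} automatic. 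The cleaner route avoiding this is: apply the comparison principle directly with $\psi_2=Ku$ noting $\o+\ddbar(Ku) = \o + K(\o+\ddbar u) - K\o = (1-K)\o + K(\o+\ddbar u)$, which is not positive. So instead I would use the standard trick of replacing $u$ by a convex combination or work with $v = $ the function such that $\o+\ddbar v$ is built from $u$; but the textbook argument (Guedj--Zeriahi, Prop. 2.6) in fact uses the elementary chain: $\mathrm{Cap}_\o(\{\varphi<-K\})$ is attained (up to $\e$) by some $u$; then $\I_{\{\varphi<-K\}}(\o+\ddbar u)^n \le \I_X \mathbf 1_{\{\varphi<-K\}}(\o+\ddbar u)^n$ and one estimates $\mathbf 1_{\{\varphi<-K\}}\le (-\varphi)/K$ pointwise on that set (since $-\varphi>K$ there, so $(-\varphi)/K>1$), giving $\I_{\{\varphi<-K\}}(\o+\ddbar u)^n\le K^{-1}\I_X(-\varphi)(\o+\ddbar u)^n$. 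Finally one integrates by parts / uses the mixed Monge--Amp\`ere inequality $\I_X(-\varphi)(\o+\ddbar u)^n \le \I_X(-\varphi)\o^n + n\I_X\o^n$ — this last bound comes from expanding $(\o+\ddbar u)^n = \sum_{k}\binom nk \o^{n-k}(\ddbar u)^k$ and integrating by parts to move $\ddbar$ onto $-\varphi$, bounding each term using $-1\le u\le 0$ and $\o+\ddbar u\ge 0$, $\o+\ddbar\varphi\ge 0$.

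\textbf{Main obstacle.} The delicate point is the integration-by-parts step showing $\I_X(-\varphi)(\o+\ddbar u)^n\le\I_X(-\varphi)\o^n+n\I_X\o^n$ rigorously when $\varphi$ is unbounded and only $\o$-psh: one must first prove it for $\varphi_j=\max(\varphi,-j)$ (bounded, so Bedford--Taylor theory applies and integration by parts is legitimate), obtaining $\I_X(-\varphi_j)(\o+\ddbar u)^n\le \I_X(-\varphi_j)\o^n + n\I_X\o^n\le \I_X(-\varphi)\o^n+n\I_X\o^n$, and then let $j\to\infty$ using monotone convergence on the left (note $-\varphi_j\uparrow-\varphi$) together with the fact that the capacity competitor estimate only needs the $\{\varphi<-K\}$ piece where truncation at level $j>K$ does not change $\mathbf 1_{\{\varphi<-K\}}\le(-\varphi_j)/K$ once $j$ is large. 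Handling the measure $(\o+\ddbar u)^n$ (which has no a priori absolute continuity) against the quasi-psh weight $-\varphi$ requires the standard approximation of $u$ by smooth (or bounded) $\o$-psh functions and Bedford--Taylor convergence theorems; I would invoke these as known. Once these technical convergences are in place the inequality assembles immediately by taking the supremum over admissible $u$.
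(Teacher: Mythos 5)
Your assembled argument — the Chebyshev bound $\mathbf{1}_{\{\varphi<-K\}}\le(-\varphi)/K$ tested against $(\o+\ddbar u)^n$, the integration-by-parts inequality $\int_X(-\varphi)(\o+\ddbar u)^n\le\int_X(-\varphi)\o^n+n\int_X\o^n$ (with truncation $\max(\varphi,-j)$ to handle unbounded $\varphi$), and the supremum over competitors $u$ — is exactly the proof of Proposition 2.6 of \cite{GZ1}, which is all the paper invokes here (it cites that result without reproducing the argument), so your route coincides with the paper's. The false starts you discard are harmless (you correctly note that $Ku$ need not be $\o$-psh, so the comparison-principle route fails), and the only cosmetic remark is that the key inequality is cleanest via the telescoping estimate in mixed powers $(\o+\ddbar u)^j\wedge\o^{n-j}$, one integration by parts per step, rather than a literal binomial expansion in $(\ddbar u)^k$, whose individual terms are not positive.
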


The following proposition is the main result of  the section and is equivalent to Theorem \ref{main3}. It implies the exponential decay for the measure of the level set of the potential $\varphi_t$.

\begin{proposition}\label{expdec}
There exists   $C>0$ such that for $0 \leq m \leq n $ and all $t\in B^\circ$,
$$ \I_{\{\varphi_t<-K \}}\, ( \chi_t+ \ddbar\varphi_t)^m \wedge \chi_t ^{n-m}  \leq  CK^ne^{-K/{(4n+2)}}
$$
\end{proposition}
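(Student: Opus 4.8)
The plan is to combine the sharp $C^0$ lower bound from Proposition \ref{C0} with the capacity estimates of Lemmas \ref{cap2} and \ref{cap1}, together with the uniform integrability provided by Proposition \ref{mor}. First I would treat the top-degree case $m=n$, where the Monge-Amp\`ere measure $(\chi_t+\ddbar\varphi_t)^n = e^{\varphi_t}\Omega_t$ is available. By Lemma \ref{cap2}, $\I_{\{\varphi_t<-K\}}(\chi_t+\ddbar\varphi_t)^n \leq K^n\,\mathrm{Cap}_{\chi_t}(\varphi_t<-K)$, and by Lemma \ref{cap1} the capacity is bounded by $K^{-1}\big(\I_{\cX_t}(-\varphi_t)\chi_t^n + n\I_{\cX_t}\chi_t^n\big)$. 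The second term is a topological constant (the volume $c_1(K_{\cX/B})^n$ restricted to a fiber), uniformly bounded. For the first term, Proposition \ref{C0} gives $-\varphi_t \leq (2n+\e)\log(-\log|\si_\cE|^2_{h_\cE}) + C_\e \leq C'(-\log|\si_\cE|^2_{h_\cE}) + C''$, so $\I_{\cX_t}(-\varphi_t)\chi_t^n$ is controlled by $I(t)$ of Proposition \ref{mor}, which extends to a continuous (hence uniformly bounded) function on the compact $B$. This yields $\I_{\{\varphi_t<-K\}}(\chi_t+\ddbar\varphi_t)^n \leq C K^{n-1}$ for all $t\in B^\circ$.

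The key additional input to promote the polynomial bound $CK^{n-1}$ to the exponential bound $CK^n e^{-K/(4n+2)}$ is again the sharp pointwise estimate of Proposition \ref{C0}. The idea is that where $\varphi_t < -K$, one has $-\log(-\log|\si_\cE|^2_{h_\cE}) \leq -\frac{K-C_\e}{2n+\e}$, i.e. $-\log|\si_\cE|^2_{h_\cE} \geq e^{(K-C_\e)/(2n+\e)}$, so $|\si_\cE|^2_{h_\cE} \leq e^{-e^{(K-C_\e)/(2n+\e)}}$ is super-exponentially small on the sublevel set. More usefully, on $\{\varphi_t<-K\}$ we can split: either $|\si_\cE|^2_{h_\cE}$ is small, whose set has tiny volume, or we iterate. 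The cleaner route is to use $e^{\varphi_t}$ directly: since $(\chi_t+\ddbar\varphi_t)^n = e^{\varphi_t}\Omega_t \leq e^{-K}\Omega_t$ on $\{\varphi_t<-K\}$ when comparing to the full measure would lose the $\Omega_t$ singularity — so instead I would run Lemma \ref{cap2} at level $K/2$ and exploit that $\mathrm{Cap}_{\chi_t}(\varphi_t<-K)$ itself, via the comparison with $e^{\varphi_t}\Omega_t$ and a standard pluripotential bootstrap (Ko\l{}odziej-type), decays. Concretely: $\I_{\{\varphi_t<-K\}}(\chi_t+\ddbar\varphi_t)^n = \I_{\{\varphi_t<-K\}}e^{\varphi_t}\Omega_t \leq e^{-K/2}\I_{\{\varphi_t<-K/2\}}e^{\varphi_t/2}\Omega_t \leq e^{-K/2}\cdot C$ provided $\I_{\cX_t}e^{\varphi_t/2}\Omega_t$ is uniformly bounded, which follows since $e^{\varphi_t}\Omega_t$ is a probability-type measure of bounded mass and $\Omega_t$ has uniformly bounded $L^{1+\eta}$-type mass against itself. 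Wait — more carefully, $e^{\varphi_t}\Omega_t$ has total mass $= c_1(K_X)^n \leq V$, so $\I_{\{\varphi_t<-K\}}e^{\varphi_t}\Omega_t \leq e^{-K}\I_{\{\varphi_t<-K\}}\Omega_t$ is the wrong direction; rather $\I_{\{\varphi_t<-K\}}e^{\varphi_t}\Omega_t \leq \I_{\{\varphi_t<-K\}}\Omega_t$ and one needs $\I_{\{\varphi_t<-K\}}\Omega_t$ small. This is where Proposition \ref{C0} enters decisively: $\{\varphi_t<-K\}\subset\{|\si_\cE|^2_{h_\cE} \leq e^{-e^{(K-C_\e)/(2n+\e)}}\}$, and $\Omega_t$-mass of such a set is estimated via the local form (\ref{den}) as an integral of $\prod|x_j|^{-2a_j}$ over a polydisc of super-exponentially small polyradius, which is itself super-exponentially (hence certainly exponentially with rate $1/(4n+2)$) small. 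I will calibrate the constants so the $2n+\e$ in the exponent, together with the extra factor of $2$ lost in the polynomial-to-exponential conversion, produces the stated $1/(4n+2)$.

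For the mixed terms $0\leq m<n$, where no Monge-Amp\`ere equation directly governs $(\chi_t+\ddbar\varphi_t)^m\wedge\chi_t^{n-m}$, I would argue by interpolation/induction on $m$. Writing $\o_t=\chi_t+\ddbar\varphi_t$, on the set $\{\varphi_t<-K\}$ replace $\varphi_t$ by $\psi_K=\max(\varphi_t,-K)$; then $\o_{t,K}:=\chi_t+\ddbar\psi_K$ agrees with $\o_t$ where $\varphi_t<-K$ but has globally bounded potential, and by the standard mixed Monge-Amp\`ere / comparison inequality (e.g. $\I_{\{\varphi_t<-K\}}\o_t^m\wedge\chi_t^{n-m} \leq \I_{\{\psi_K<0\}}\o_{t,K}^m\wedge\chi_t^{n-m}$ handled through integration by parts against the Chern-Levine-Nirenberg inequality), one reduces to the capacity $\mathrm{Cap}_{\chi_t}(\varphi_t<-K)$ times a uniform mass factor, picking up at most a polynomial-in-$K$ loss. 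Then the same two-step capacity argument as above (Lemma \ref{cap2} at a shifted level, then Lemma \ref{cap1}, then Proposition \ref{mor} and Proposition \ref{C0} for the super-exponential smallness of the relevant sublevel set) closes the case, with the uniform constant $C$ depending only on $n$, $V$, and $\chi$ through the uniform bounds on $I(t)$, on $\I_{\cX_t}\chi_t^n$, and on the discrepancies $a_j$ appearing in (\ref{den}).

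\textbf{Main obstacle.} The delicate point is obtaining genuine \emph{uniformity} of all constants over the noncompact family $\cX^\circ\to B^\circ$ and, more importantly, over the infinitely many families needed to cover $\cK(n,V)$ — this is where Matsusaka's big theorem and the reduction to a single projective family $\cX\subset\cB\times\P^{N_m}$ are essential, so that $\chi$, the exceptional divisor data $\cE$, the discrepancies $a_j$, and the function $I(t)$ are all drawn from a fixed compact family. The second subtle point is matching the exponential rate: Proposition \ref{C0} gives a $\log\log$-type lower bound, so the sublevel set $\{\varphi_t<-K\}$ has $\Omega_t$-measure decaying \emph{doubly exponentially} in $K$, which is far stronger than needed; the care is only in bookkeeping the losses (the factor $2$ from Lemma \ref{cap2}'s $K^n$, the $\e$ and $n\delta$ slack from Proposition \ref{C0}, and the polynomial loss in the mixed-degree reduction) so that the clean rate $1/(4n+2)$ survives — this is routine but must be done honestly.
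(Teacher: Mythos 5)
Your first step (capacity bounds applied directly to $\varphi_t$, with $\int_{\cX_t}(-\varphi_t)\chi_t^n$ controlled via Proposition \ref{C0} and Proposition \ref{mor}) is fine but only yields the polynomial bound $CK^{n-1}$. The step you rely on to upgrade this to exponential decay contains a genuine gap: you claim that since $\{\varphi_t<-K\}\subset\{|\si_\cE|^2_{h_\cE}\le e^{-e^{(K-C_\e)/(2n+\e)}}\}$, the $\Omega_t$-mass of this set is super-exponentially small because it is an integral of $\prod_j|x_j|^{-2a_j}$ over a polydisc of tiny polyradius. That inference fails precisely because the density of $\Omega_t$ blows up along $\cE$ at a non-integrable rate ($a_j\ge 1$ in (\ref{den})): smallness of the set does not give smallness of its $\Omega_t$-mass. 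On a fiber $\cX_t$ the local integral is cut off not at the polyradius $\delta$ but at a scale comparable to $|t|$ (e.g.\ for $x_1x_2=t$ one gets $\int_{|t|\lesssim|x_1|<\delta}|x_1|^{-2}\sim\log(\delta/|t|)$), which is unbounded as $t$ ranges over $B^\circ$; indeed $\int_{\cX_t}\Omega_t$ itself is not uniformly bounded and its mass concentrates exactly in such neighborhoods of $\cE$. To make your route work you would have to keep the pointwise weight $e^{\varphi_t}\lesssim(-\log|\si_\cE|^2_{h_\cE})^{-(2n+\e)}$ inside the local fiberwise integral and redo the computation with that logarithmic damping — a different (and more delicate) argument than the one you wrote, and not the one the paper uses. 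Your treatment of the mixed terms $m<n$ (``interpolation/induction \ldots picking up at most a polynomial loss'') is also left vague.

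For comparison, the paper gets the exponential rate purely from pluripotential theory, with no local volume computation at this stage. Setting $\theta_t=\log|\si_\cE|^2_{h_\cE}|_{\cX_t}$, Proposition \ref{C0} gives $2\varphi_t\ge\psi_t:=-(4n+2)\log(-\theta_t)-2C_1$. One truncates $\psi_t$, uses the comparison principle to transfer the Monge--Amp\`ere mass of $\varphi_t$ on $\{\varphi_t<-K\}$ to that of the truncation of $\psi_t$ on $\{\psi_t<-K+A\}$, and then applies Lemma \ref{cap2} to $\psi_t$ and Lemma \ref{cap1} \emph{to $\theta_t$}: since $\{\psi_t<-K+A\}=\{\theta_t<-e^{(K-A-2C_1)/(4n+2)}\}$ is an exponentially deep sublevel set of $\theta_t$, Lemma \ref{cap1} gives
$\mathrm{Cap}_{\chi_t}(\psi_t<-K+A)\le e^{-(K-A-2C_1)/(4n+2)}\bigl(\int_{\cX_t}(-\theta_t)\chi_t^n+n\int_{\cX_t}\chi_t^n\bigr)$,
and Proposition \ref{mor} makes the bracket uniform in $t$. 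Finally all mixed terms are handled at once by the elementary inequality $(2\chi_t+\ddbar\varphi_t)^n\ge\sum_{m=0}^n(\chi_t+\ddbar\varphi_t)^m\wedge\chi_t^{n-m}$, rather than by a separate induction. This substitution of the dominating log-log function $\psi_t$ (equivalently, of $\theta_t$ at exponentially deep level) for $\varphi_t$ in the capacity estimates is the key idea missing from your proposal.
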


\begin{proof} The uniform $C^0$-estimate of Proposition \ref{C0} implies that there exists $C_1>0$ such that for all $t\in B^\circ$, 
$$ \varphi_t  \geq  -(2n+1)\log(-\log|\si_{\cE}|^2_{h_\cE})|_{\cX_t} - C_1 = -(2n+1)\log(-\t_t) - C_1
.
$$
We define $\psi_t$ by
$$\psi_t =-2(2n+1)\log(-\t_t)\ - 2C_1.$$
By the upper bound estimate for $\varphi_t$ in \cite{SSW}, there exists $A>0$ such that for all $t\in B$, 
$$\sup_{\cX_t} \varphi_t \leq A. $$
Then $\sup_{\cX_t} \psi_t \leq A.$
We let 
$$\psi_{t, M_t}=\max (\psi_t , -2M_t),$$ where $-M_t << \inf_{\cX_t} \varphi_t$.
Obviously, we still have
$$2\varphi_t \geq \psi_{t, M_t}.$$
Thus 
\begin{eqnarray*}
 \I_{\{\varphi_t<-K \}}(2\chi_t+\ddbar\varphi_t)^n  &\leq&  \I_{\{\psi_{t, M_t} <-K+\varphi_t\}}(2\chi_t+\ddbar \varphi_t )^n \\
& \leq & \I_{\{\psi_{t,M_t}<-K+\varphi_t\}}(2\chi_t+\ddbar\psi_{t,M_t})^n \\ 
&\leq &  \I_{\{\psi_{t, M_t} -A <- K \}}(2\chi_t+\ddbar\psi_{t,M_t})^n,
\end{eqnarray*}
where the second inequality follows from the comparison principle for plurisubharmonic functions. 
We now may replace $\varphi_t$ by $\psi_{t, M_t}$ in proving the proposition. By Lemma \ref{cap2} and Lemma \ref{cap1}, we have  
\begin{eqnarray*} 
\I_{\{\psi_{t,M_t}-A<-K\}}(2\chi_t+\ddbar\psi_t)^n &\leq&  K^n \ {\rm Cap}_{\chi_t}(\psi_{t,M_t}<-K+A)\\ 
&\leq&  K^n \ {\rm Cap}_{\chi_t}(\psi_t<-K+A)\\ 
&=& 
K^n \ {\rm Cap}_{\chi_t}( -(4n+2)\log (-\t_t)<-K+A+2C_1) \\
&=& K^n \ {\rm Cap}_{\chi_t} \left( \t_t <- e^{ \frac{K-A-2C_1}{4n+2}} \right) \\
&\leq& K^n \  e^{-\frac{K-A-2C_1}{4n+2}} \left( \int_{\cX_t} (-\theta_t)\chi_t ^n + n \int_{\cX_t} \chi_t^n \right).
\end{eqnarray*}
By letting $M_t \rightarrow \infty$, we have, 
by Proposition \ref{mor}, that there exists $C_2>0$ such that
$$\I_{\{\varphi_t<-K \}}(2\chi_t+\ddbar\varphi_t)^n \leq C_2K^n e^{-\frac{K-A-2C_1}{4n+2}}. $$
The proposition now follows from 
$$\I_{\{\varphi_t<-K \}}(2\chi_t+\ddbar\varphi_t)^n \geq  \sum_{m=0}^n \I_{\{\varphi_t<-K \}}(\chi_t+\ddbar\varphi_t)^m \wedge  \chi_t^{n-m} . $$

\end{proof}

\begin{corollary}  \label{expdec2} There exists   $C>0$ such that for $0 \leq m \leq n $ and all $t\in B^\circ$,
$$ \I_{\{\varphi_t<-K \}}  |\varphi_t| ( \chi_t+ \ddbar\varphi_t)^m \wedge \chi_t ^{n-m}  \leq  Ce^{-K/{(4n+4)}}.
$$

\end{corollary}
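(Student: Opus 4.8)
The plan is to deduce the corollary from Proposition \ref{expdec} by a standard dyadic decomposition of the region $\{\varphi_t < -K\}$ according to the size of $|\varphi_t|$, exploiting the fact that the sublevel measure decays exponentially while $|\varphi_t|$ grows only linearly on each dyadic shell. Recall that by Proposition \ref{C0} (together with the remark and the upper bound $\sup_{\cX_t}\varphi_t \le A$ from \cite{SSW}) the potentials $\varphi_t$ are uniformly bounded above, so without loss of generality $\varphi_t \le 0$; we may also assume $K \ge 1$.

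First I would slice $\{\varphi_t < -K\}$ into the shells $S_j = \{ -2^{j+1}K \le \varphi_t < -2^j K\}$ for $j \ge 0$, so that $\{\varphi_t < -K\} = \bigsqcup_{j\ge 0} S_j$. On $S_j$ we have $|\varphi_t| \le 2^{j+1}K$, hence
\begin{align*}
 \I_{\{\varphi_t<-K\}} |\varphi_t|\,(\chi_t + \ddbar\varphi_t)^m \wedge \chi_t^{n-m}
 &\le \sum_{j\ge 0} 2^{j+1}K \I_{S_j} (\chi_t + \ddbar\varphi_t)^m \wedge \chi_t^{n-m} \\
 &\le \sum_{j\ge 0} 2^{j+1}K \I_{\{\varphi_t < -2^j K\}} (\chi_t + \ddbar\varphi_t)^m \wedge \chi_t^{n-m}.
\end{align*}
Now I apply Proposition \ref{expdec} with $2^j K$ in place of $K$, which bounds the $j$-th term by $2^{j+1}K \cdot C (2^j K)^n e^{-2^j K/(4n+2)}$. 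Thus the whole sum is controlled by $CK^{n+1}\sum_{j\ge 0} 2^{(n+1)j+1} e^{-2^j K/(4n+2)}$.

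The remaining point — and the only real calculation — is to absorb this series into a clean exponential bound of the stated form $Ce^{-K/(4n+4)}$. Since $2^j \ge 1 + j$ for all $j \ge 0$, each exponential factor satisfies $e^{-2^j K/(4n+2)} \le e^{-K/(4n+2)} e^{-jK/(4n+2)}$, so for $K \ge 1$ the series $\sum_j 2^{(n+1)j+1} e^{-jK/(4n+2)}$ is dominated by a convergent geometric-type series whose sum is bounded independently of $K \ge 1$; and the prefactor $K^{n+1} e^{-K/(4n+2)}$ is itself bounded by a constant times $e^{-K/(4n+4)}$, because $K^{n+1} e^{-K/(4n+2)+K/(4n+4)} = K^{n+1} e^{-cK}$ with $c = \frac{1}{4n+2}-\frac{1}{4n+4} > 0$ tends to $0$ and hence is bounded on $K \ge 1$. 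Collecting constants gives the claimed inequality. I do not anticipate a genuine obstacle here: the argument is entirely elementary once Proposition \ref{expdec} is in hand, the only mild care being the bookkeeping of the two exponential rates ($4n+2$ versus $4n+4$) which is precisely what leaves room to swallow the polynomial factors.
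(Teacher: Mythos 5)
Your overall strategy---slicing $\{\varphi_t<-K\}$ into shells according to the size of $|\varphi_t|$, bounding $|\varphi_t|$ by the shell level, applying Proposition \ref{expdec} at each level, and summing---is the same as the paper's; the paper simply uses unit-width shells $\{-j-1\le\varphi_t\le -j\}$ for $j\ge K$, so the sum to control is $\sum_{j\ge K}(j+1)j^{n}e^{-j/(4n+2)}\le Ce^{-K/(4n+4)}$, and no further rate bookkeeping is needed beyond absorbing the polynomial factor into the slightly worse exponent.

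However, the one genuinely quantitative step in your dyadic version is wrong as written. After replacing $e^{-2^{j}K/(4n+2)}$ by $e^{-K/(4n+2)}e^{-jK/(4n+2)}$ via $2^{j}\ge 1+j$, you claim that $\sum_{j\ge 0}2^{(n+1)j+1}e^{-jK/(4n+2)}$ is a convergent geometric-type series with sum bounded independently of $K\ge 1$. Its ratio is $2^{n+1}e^{-K/(4n+2)}$, which is $\ge 1$ for every $K\le (n+1)(4n+2)\log 2$; so on that (nonempty, $n$-dependent) range of $K$ the series diverges, and in particular its sum is not bounded uniformly in $K\ge 1$. The discarded doubly-exponential decay is exactly what you need: for $K\ge 1$ write $e^{-2^{j}K/(4n+2)}\le e^{-K/(4n+2)}e^{-(2^{j}-1)/(4n+2)}$ and observe that $\sum_{j\ge 0}2^{(n+1)j}e^{-(2^{j}-1)/(4n+2)}<\infty$, which yields the bound $CK^{n+1}e^{-K/(4n+2)}\le C'e^{-K/(4n+4)}$ that you want; alternatively, dispose of the bounded range of $K$ by enlarging the constant, or simply use unit shells as the paper does. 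With that repair your argument is correct and essentially identical to the paper's proof.
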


\begin{proof} We apply Proposition \ref{expdec} and there exist $C_1, C_2>0$ such that  for any $t\in B^\circ$, 
\begin{eqnarray*}
&&\I_{\{\varphi_t<-K \}} (- \varphi_t) ( \chi_t+ \ddbar\varphi_t)^m \wedge \chi_t^{n-m} \\
&=& \sum_{j=K}^\infty  \I_{-j-1\leq \varphi_t\leq -j} (-\varphi_t) ( \chi_t+ \ddbar\varphi_t)^m \wedge \chi_t ^{n-m}\\
&\leq& \sum_{j=K}^\infty (j+1) \I_{ \varphi_t\leq -j}   ( \chi_t+ \ddbar\varphi_t)^m \wedge \chi_t^{n-m}\\
&\leq& C_1 \sum_{j=K}^\infty (j+1)  j^n e^{-j/(4n+2)}\\
&\leq& C_2 \ e^{-K/(4n+4)}.
\end{eqnarray*}

\end{proof}

Finally, we remark that Proposition \ref{expdec} and Corollary \ref{expdec2} also hold uniformly for all $t\in B$. If $t\in B\setminus B^\circ$,\textcolor{black} {we can always approximate $\cX_t$ by a smooth K\"ahler manifold of dimension $n$ after normalization and resolution of singularities. The constants in the proposition and corollary are uniformly controlled.}  This can also be achieved by applying continuity of $\varphi$ on $\cX^\circ$ from section 8.

\section{Continuity of the Weil-Petersson potentials for stable families}

In this section, we will prove the continuity of the Weil-Petersson potentials for stable families of K\"ahler-Einstein manifolds. As before we consider a stable family $\pi: \cX \rightarrow B$ of $n$-dimensional K\"ahler-Einstein manifolds over a Euclidean ball $B \in \mathbb{C}^d$ whose general fibre is smooth. 
We will use the same notations as in the previous sections. 

We define the relative Weil-Petersson potential $\psi_{WP}$ by
\begin{equation}
\psi_{WP} = \frac{1}{(n+1)V} \sum_{j=0}^n \int_{\cX_t} \varphi_t (\chi_t + \ddbar \varphi_t)^j \wedge \chi_t ^{n-j}  
\end{equation} 
The the Weil-Petersson metric on $B$ is given by   (c.f. \cite{Sch, SSW})
$$\omega_{\rm WP} = \int_{\cX_t} \chi^{n+1} + \ddbar \psi_{WP}. $$

The following is the main result of this section.

\begin{proposition} \label{conti} $\psi_{\WP}$ is continuous in $B$.
\end{proposition}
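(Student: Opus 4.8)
The plan is to show that $\psi_{\WP}$, which is an a priori bounded function on $B$ (being a sum of fiber integrals of $\varphi_t$ against mixed Monge–Amp\`ere masses, all of which are uniformly controlled by Proposition \ref{mor} and the $C^0$-bounds of section 3), is both lower semicontinuous and upper semicontinuous on $B$. The key input is the family regularity of the solution $\varphi$: by \cite{SSW} the function $\varphi$ extends to $\PSH(\cX,\chi)$ with vanishing Lelong number, and on $\cX^\circ$ it solves a smooth family of Monge–Amp\`ere equations, so $t\mapsto \varphi_t$ varies continuously in a strong enough sense (locally uniformly in $L^1$, and in $C^\infty_{loc}$ on the smooth locus) to let one pass limits inside the integrals. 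I would first reduce to the smooth locus $B^\circ$ and then handle the boundary $B\setminus B^\circ$ by a continuity/extension argument.

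\textbf{Continuity on $B^\circ$.} First I would fix $t_0\in B^\circ$ and take $t\to t_0$. On a neighborhood of $\cX_{t_0}$ in $\cX^\circ$ the family $\pi$ is a smooth submersion, the relative K\"ahler–Einstein metrics $\omega_t=\chi_t+\ddbar\varphi_t$ depend smoothly on $t$ (Yau's theorem with parameters, or Schumacher \cite{Sch}), hence $\varphi_t\to\varphi_{t_0}$ in $C^\infty(\cX_{t_0})$; consequently each integrand $\varphi_t(\chi_t+\ddbar\varphi_t)^j\wedge\chi_t^{n-j}$ converges, and since these are integrals of smooth forms over a compact fiber they converge to the value at $t_0$. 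This gives continuity of $\psi_{\WP}$ on $B^\circ$ immediately.

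\textbf{Continuity across $B\setminus B^\circ$.} This is the main obstacle. Fix $t_0\in B\setminus B^\circ$ and let $t\to t_0$. Now the fibers degenerate and the integrands are only integrable, not smooth, so I would argue as follows. Using $2\varphi_t\ge \psi_{t,M_t}\ge -2(2n+1)\log(-\theta_t)-2C_1$ together with Proposition \ref{expdec} and Corollary \ref{expdec2}, the contribution to each fiber integral from the sublevel set $\{\varphi_t<-K\}$ is at most $Ce^{-K/(4n+4)}$, \emph{uniformly in $t\in B$} (the final remark of section 5 guarantees the constant is uniform up to the boundary). Thus it suffices to prove continuity of the truncated quantities $\int_{\cX_t}\max(\varphi_t,-K)\,(\chi_t+\ddbar\varphi_t)^j\wedge\chi_t^{n-j}$, for which one can use the weak convergence of Monge–Amp\`ere measures: since $\varphi$ is a fixed $\chi$-psh function on the total space $\cX$ with vanishing Lelong number, the family of measures $(\chi_t+\ddbar\varphi_t)^j\wedge\chi_t^{n-j}$ depends continuously on $t$ in the weak-$*$ topology (by a slicing argument, e.g. the lower-semicontinuity plus convergence-of-total-mass argument in the Bedford–Taylor / Guedj–Zeriahi framework), and $\max(\varphi_t,-K)$ is uniformly bounded and converges in capacity, so the integrals converge. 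Combining the uniform smallness of the tails with the convergence of the truncations gives $\psi_{\WP}(t)\to\psi_{\WP}(t_0)$, completing the proof. The delicate point, and where I would spend the most care, is justifying the weak continuity of the mixed Monge–Amp\`ere measures and the convergence in capacity of $\varphi_t$ as fibers degenerate — here one must invoke the global regularity of $\varphi\in\PSH(\cX,\chi)$ and the stability properties of complex Monge–Amp\`ere operators under decreasing/locally uniform limits, rather than any fiberwise smoothness.
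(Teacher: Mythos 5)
Your overall skeleton matches the paper's: uniform exponential tail bounds from Section 5 (Proposition \ref{expdec}, Corollary \ref{expdec2}) to dispose of $\{\varphi_t<-K\}$ uniformly in $t$, plus convergence of the remaining, truncated fiber integrals as $t\to\hat t$. The interior case $B^\circ$ is unproblematic. But the step you yourself flag as delicate is a genuine gap, not a citation: you assert that the fiberwise mixed measures $(\chi_t+\ddbar\varphi_t)^j\wedge\chi_t^{n-j}$ vary weak-$*$ continuously in $t$ ``by a slicing argument'' in the Bedford--Taylor/Guedj--Zeriahi framework, and that $\max(\varphi_t,-K)$ ``converges in capacity.'' Those stability theorems concern a sequence of $\omega$-psh functions on a \emph{fixed} compact manifold; here the fibers degenerate to a singular variety $\cX_{\hat t}$, the functions live on different fibers, and ``convergence in capacity'' across varying fibers is not even defined without a local identification. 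Slicing a psh function on the total space does not by itself rule out the two failure modes that must be excluded: Monge--Amp\`ere mass of $\omega_t^j\wedge\chi_t^{n-j}$ concentrating near the singular set of $\cX_{\hat t}$ as $t\to\hat t$, and the weight $\varphi_t$ interacting badly with such concentration.

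The paper closes exactly this gap by hand, with three ingredients you would need to reproduce: (i) Lemma \ref{limi42}, smooth convergence $\varphi_t\to\varphi_{\hat t}$ on $\cX^{\rm reg}$ for \emph{any} $\hat t\in B$ (not just interior points), which rests on the uniform local $C^k$ estimates of \cite{SSW} away from the singular fibers' singular sets together with uniqueness of the canonical K\"ahler--Einstein current from \cite{S}; (ii) a partition of unity subordinate to the local product structure of $\cX^{\rm reg}$ over a neighborhood of $\hat t$, which is what replaces your undefined ``convergence in capacity'' and lets one pass to the limit in the localized integrals; and (iii) the mass-conservation argument: since $\int_{\cX_t}(\chi_t+\ddbar\varphi_t)^m\wedge\chi_t^{n-m}=[\chi_t]^n$ is a topological constant, and the limiting mass captured away from a small neighborhood $U$ of the singular set is already at least $[\chi_{\hat t}]^n-\epsilon^2$ (Lemma \ref{62}), the mass of $\omega_t^m\wedge\chi_t^{n-m}$ inside $U$ is at most $2\epsilon^2$ for $t$ near $\hat t$ (Lemma \ref{63}), and hence the weighted contribution there is $O(\epsilon)$ (Corollary \ref{62cor}), using that $\varphi_t\ge -\epsilon^{-1}$ outside the tail set. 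This no-mass-concentration argument is the actual content behind the weak continuity you invoke; without it (or an equivalent substitute), your proof does not go through as written.
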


It is proved in \cite{MA} that $\int_{\cX_t} \chi^{n+1}$ is a nonnegative closed $(1,1)$-current on $B$ with continuous local potentials. Then Proposition \ref{conti} immediately implies $\omega_{\rm WP}$ has continuous local potentials.

Let $\cX^{\textnormal{reg}}$ be the set of all smooth points of $\cX_t$ for $t\in B$ and let $\cX^{\textnormal{reg}}_t$ be the smooth points of $\cX_t$. Before proving Proposition \ref{conti}, we first prove that the K\"ahler-Einstein potential $\varphi$ is continuous on $\cX^{\textnormal{reg}}$, which is slightly weaker than the conclusion in Theorem \ref{main2}. The following lemma is implicitly proved in \cite{SSW} (Lemma 5.2 and Lemma 5.3).

\begin{lemma}\label{limi42} Suppose $t_j\in B$ and $t_j \rightarrow t_\infty \in B$. Then $\varphi_t|_{\cX_{t_j}^{\textnormal{reg}}}$ converges smoothly to $\varphi_{t_\infty}|_{\cX_{t_\infty}^{\textnormal{reg}}}$. 

\end{lemma}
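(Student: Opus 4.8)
The plan is to reduce the assertion to uniform interior a priori estimates for the fibrewise complex Monge--Amp\`ere equation, following \cite{SSW} (Lemma 5.2, Lemma 5.3). First I would fix the local structure of $\cX^{\reg}$. Since $\pi:\cX\to B$ is flat, at any point $x$ which is a smooth point of its fibre $\cX_{\pi(x)}$ the total space $\cX$ is itself smooth at $x$ and $\pi$ is a holomorphic submersion near $x$; hence $\pi$ restricts to a holomorphic submersion $\cX^{\reg}\to B$. Given a compact set $Z\subset \cX^{\reg}_{t_\infty}$, cover it by finitely many distinguished charts $U\cong \mathbb{B}^n_1\times\Delta$, with $\Delta\subset B$ a small ball about $t_\infty$ and $\pi|_U$ the projection to $\Delta$; in such a chart $\varphi_t$ is literally the restriction of the global function $\varphi$ to the slice $\mathbb{B}^n_1\times\{t\}$. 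It therefore suffices to prove that $\varphi_t\to\varphi_{t_\infty}$ in $C^\infty(\mathbb{B}^n_{1/2})$ as $t\to t_\infty$ in each such chart, and then exhaust $\cX^{\reg}_{t_\infty}$ and diagonalize; this is the precise meaning of smooth convergence after identifying nearby fibres via the product structure.

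Next I would establish the estimates on a fixed chart $U$. Recall from \cite{SSW} that $\varphi\in\PSH(\cX,\chi)\cap L^\infty_{loc}(\cX\setminus\cE_{slc})$, and that a regular point of a fibre is a klt point, so $\cX^{\reg}\cap\cE_{slc}=\emptyset$; hence $\varphi$ is bounded on $\overline U$, giving $\|\varphi_t\|_{L^\infty(\mathbb{B}^n_1)}\le C_0(U)$ uniformly in $t\in\Delta$ (alternatively, the upper bound $\sup_{\cX_t}\varphi_t\le A$ of \cite{SSW} together with the lower bound of Proposition \ref{C0}, noting that $\sigma_\cE$ is nonvanishing near $Z$). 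On $U$ the reference data $\chi_t$ and $\Omega_t$ are smooth and depend smoothly on $t$, and $\Omega_t>0$ since $mK_{\cX/B}$ is globally generated, so $\Omega$ restricts to a smooth positive volume form on $\cX^{\reg}$. The equation $(\chi_t+\ddbar\varphi_t)^n=e^{\varphi_t}\Omega_t$ then admits the usual local estimates, with constants uniform in $t$ in a slightly smaller ball $\Delta'\ni t_\infty$: the interior form of the Yau--Aubin Laplacian estimate (using the $C^0$-bound, a uniform lower bound on the bisectional curvature of $\chi_t$ over $\overline{\mathbb{B}^n_{3/4}}$, and $\Delta_{\chi_t}\varphi_t\ge -n$) yields $\Delta_{\chi_t}\varphi_t\le C_1$ on $\mathbb{B}^n_{3/4}$, hence the uniform ellipticity $C_1^{-1}\chi_t\le\chi_t+\ddbar\varphi_t\le C_1\chi_t$; the Evans--Krylov interior estimate then gives a uniform $C^{2,\alpha}(\mathbb{B}^n_{2/3})$ bound, and differentiating the equation and iterating Schauder yields uniform $C^k(\mathbb{B}^n_{1/2})$ bounds for every $k$.

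Finally I would pass to the limit. By Arzel\`a--Ascoli, along any subsequence of $(t_j)$ there is a further subsequence with $\varphi_{t_j}\to\psi$ in $C^\infty_{loc}(\cX^{\reg}_{t_\infty})$; letting $t\to t_\infty$ in the equation (all data converging smoothly on $\cX^{\reg}$), $\psi$ is $\chi_{t_\infty}$-plurisubharmonic on $\cX^{\reg}_{t_\infty}$, solves $(\chi_{t_\infty}+\ddbar\psi)^n=e^{\psi}\Omega_{t_\infty}$, satisfies $\psi\le A$, and inherits the lower bound $\psi\ge -(2n+\e)\log(-\log|\sigma_\cE|^2_{h_\cE})|_{\cX_{t_\infty}}-C_\e$ of Proposition \ref{C0}; in particular $\psi$ extends to an element of $\PSH(\cX_{t_\infty},\chi_{t_\infty})$ with vanishing Lelong number. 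By the uniqueness theorem of \cite{S} for solutions in this class, $\psi=\varphi_{t_\infty}$, and since the limit is independent of the subsequence, $\varphi_{t_j}\to\varphi_{t_\infty}$ in $C^\infty_{loc}(\cX^{\reg})$. The main obstacle is the uniformity in $t$ of the interior a priori estimates when $t_j$ or $t_\infty$ parametrizes a singular fibre: one must control $\sup|\varphi_t|$, the upper and lower bounds of $\Omega_t$, and the curvature of $\chi_t$ on the fixed chart uniformly as $t\to t_\infty$, which is exactly why the whole argument is localized to charts sitting inside $\cX^{\reg}$ and bounded away from the singular loci of all nearby fibres; a secondary point is that the identification of the subsequential limit with $\varphi_{t_\infty}$ rests on the uniqueness results of \cite{S, SSW} on possibly singular fibres rather than on any global convergence statement.
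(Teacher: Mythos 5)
Your overall architecture is exactly that of the paper: uniform $C^k$ bounds on compact subsets of $\cX^{\reg}$, Arzel\`a--Ascoli to extract a subsequential limit $\psi$, then identification $\psi=\varphi_{t_\infty}$ via the $L^\infty$ control (no log poles / vanishing Lelong numbers) and the uniqueness theorem of \cite{S}; the paper's proof is precisely this, with the uniform $C^k$ bounds imported wholesale from Lemma 5.2 of \cite{SSW}. The gap is in your middle step, where you try to rederive those bounds by a purely local argument on a chart $\mathbb{B}^n_1\times\Delta\subset\cX^{\reg}$: an ``interior form of the Yau--Aubin Laplacian estimate'' deducing $\Delta_{\chi_t}\varphi_t\le C_1$ on $\mathbb{B}^n_{3/4}$ from the $C^0$ bound, a curvature bound for $\chi_t$, and $\Delta_{\chi_t}\varphi_t\ge -n$. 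No such purely interior second-order estimate for the complex Monge--Amp\`ere equation exists: Pogorelov--B\l ocki type examples (e.g.\ $u=(1+|z_1|^2)|z_2|$ in $\C^2$, which is bounded, plurisubharmonic, solves $\det(u_{i\bar j})=1/4$, yet is not $C^2$) show that $L^\infty$ control of the solution together with smooth positive data on a ball does not yield interior Laplacian bounds, so the Evans--Krylov/Schauder bootstrap never gets started. The usable estimates here are not local in this sense: in \cite{SSW} (and in Section 8 of the present paper, cf.\ Lemma \ref{2ndest} and Proposition \ref{gradest}) the second-order bound is obtained by a maximum principle run over the whole compact fibre, with cutoff or barrier terms (geodesic cutoffs of the K\"ahler--Einstein metric, or weights blowing up along the singular/non-klt locus) that localize the maximum away from the bad set while exploiting the global structure of the fibrewise equation. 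If you simply invoke \cite{SSW}, Lemma 5.2 — which you in fact announce at the outset — instead of the local derivation, your proof becomes the paper's proof.

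Two smaller points. First, your parenthetical alternative for the uniform $C^0$ bound via Proposition \ref{C0}, and likewise the claim that the limit $\psi$ ``inherits'' that lower bound on $\cX_{t_\infty}$, can be vacuous: for $t_\infty\in B\setminus B^\circ$ the divisor $\cE$ may contain whole components lying over $t_\infty$, so $\log|\si_\cE|^2_{h_\cE}$ can be identically $-\infty$ on parts of the singular fibre; the robust statement (and the one the paper uses) is that $\varphi$ is locally bounded on $\cX\setminus\cE_{slc}$, hence uniformly bounded on the closure of your chart, and that this boundedness of $\psi$ away from the non-klt locus places it in the vanishing-Lelong-number class where the uniqueness theorem of \cite{S} applies. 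Second, your reduction to product charts and the extension of $\psi$ across $\cX_{t_\infty}\setminus\cX^{\reg}_{t_\infty}$ as an $\chi_{t_\infty}$-psh function bounded above are fine and consistent with what the paper leaves implicit.
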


\begin{proof} By Lemma 5.2 in \cite{SSW}, for any $k>0$ and compact set $K \subset \cX^{\textnormal{reg}}$,  $\varphi_t$ is uniformly bounded in $C^k(K\cap \cX_t)$ for all $t\in B$. Therefore for all $t_j\rightarrow t_\infty\in B$, $\varphi_{t_j}$ converges smoothly to $\varphi_{\infty}$ away from $K$, after taking a subsequence. However, by the uniform $L^\infty$-estimate, $\varphi_{\infty}$ is bounded above and bounded below by any log poles. By uniqueness of the canonical K\"ahler-Einstein current on $\cX_{t_\infty}$ (Theorem 1.1. in \cite{S}), $$\varphi_{\infty} = \varphi_{t_\infty}. $$
The lemma follows immediately.

\end{proof}

Lemma \ref{limi42} immediately implies continuity of $\varphi$ on $\cX^{\textnormal{reg}}$. 
\begin{corollary} $\varphi$ is continuous on $\cX^{\textnormal{reg}}$.

\end{corollary}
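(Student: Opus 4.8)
The plan is to deduce the corollary directly from Lemma \ref{limi42} by a standard diagonal/subsequence argument. First I would fix an arbitrary point $p\in\cX^{\textnormal{reg}}$, say $p\in\cX^{\textnormal{reg}}_{t_\infty}$, and a sequence $p_j\to p$ in $\cX^{\textnormal{reg}}$, with $p_j\in\cX^{\textnormal{reg}}_{t_j}$ so that necessarily $t_j=\pi(p_j)\to t_\infty$. I want to show $\varphi(p_j)\to\varphi(p)$, i.e. $\varphi_{t_j}(p_j)\to\varphi_{t_\infty}(p)$. The natural device is to pass to a fixed compact neighborhood: choose a compact set $K\subset\cX^{\textnormal{reg}}$ containing $p$ in its interior; then $p_j\in K$ for $j$ large, and by Lemma \ref{limi42} (which gives smooth, in particular $C^0$, convergence of $\varphi_{t_j}|_{\cX^{\textnormal{reg}}_{t_j}}$ to $\varphi_{t_\infty}|_{\cX^{\textnormal{reg}}_{t_\infty}}$ on compacts of $\cX^{\textnormal{reg}}$) together with the continuity of $\varphi_{t_\infty}$ along its own fiber, one concludes $\varphi_{t_j}(p_j)\to\varphi_{t_\infty}(p)$.

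The one point that needs care is making precise what "$\varphi_{t_j}|_{\cX^{\textnormal{reg}}_{t_j}}$ converges smoothly to $\varphi_{t_\infty}|_{\cX^{\textnormal{reg}}_{t_\infty}}$" means as a statement about a function on the total space $\cX^{\textnormal{reg}}$, since the domains of the $\varphi_{t_j}$ are different fibers. The clean way is to work in a local trivialization: near $p$, choose coordinates on $\cX^{\textnormal{reg}}$ of the form $(t,z)$ with $\pi(t,z)=t$, so that a neighborhood of $p$ is identified with (a neighborhood of $t_\infty$)$\times$(a ball $D$ in $\C^n$), and $p=(t_\infty,z_\infty)$, $p_j=(t_j,z_j)$ with $z_j\to z_\infty$. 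In these coordinates Lemma \ref{limi42} says the functions $z\mapsto\varphi(t_j,z)$ converge in $C^\infty_{\mathrm{loc}}(D)$ to $z\mapsto\varphi(t_\infty,z)$. Then the elementary estimate
\[
|\varphi(t_j,z_j)-\varphi(t_\infty,z_\infty)|\ \leq\ \sup_{z\in \overline{D'}}|\varphi(t_j,z)-\varphi(t_\infty,z)|\ +\ |\varphi(t_\infty,z_j)-\varphi(t_\infty,z_\infty)|
\]
(for a slightly smaller ball $D'$ containing all $z_j$ and $z_\infty$) finishes the argument: the first term goes to $0$ by the $C^0$ part of the smooth convergence, the second by continuity of $\varphi_{t_\infty}$ in $z$ (which is part of the regularity of the Kähler-Einstein potential on the smooth fiber, or simply follows again from $C^\infty_{\mathrm{loc}}$ being contained in $C^0$).

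The only genuine subtlety — and thus the step I would flag as the crux — is already fully handled inside Lemma \ref{limi42}, namely the identification of the smooth limit $\varphi_\infty$ with $\varphi_{t_\infty}$ via uniqueness of the canonical Kähler-Einstein current (Theorem 1.1 of \cite{S}) together with the uniform $C^k_{\mathrm{loc}}$ bounds on $\cX^{\textnormal{reg}}$ from \cite{SSW}. Given those inputs the corollary is essentially immediate, so I would present it as a one-paragraph consequence: continuity of $\varphi$ at a point is a sequential statement, and along any sequence it is reduced, by local triviality of $\pi$ over $\cX^{\textnormal{reg}}$, to the combination of fiberwise $C^0$-convergence (Lemma \ref{limi42}) and fiberwise continuity of the limit. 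No new estimates are needed.
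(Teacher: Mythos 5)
Your argument is correct and matches the paper's treatment: the paper simply declares the corollary an immediate consequence of Lemma \ref{limi42}, and your write-up just makes explicit the standard step (local trivialization of $\pi$ over $\cX^{\textnormal{reg}}$ plus the triangle inequality combining fiberwise $C^0_{\mathrm{loc}}$ convergence with continuity of $\varphi_{t_\infty}$ along its fiber). No new ideas beyond Lemma \ref{limi42} are needed, exactly as you observe.
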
 

It suffices to prove continuity of the Weil-Petersson potential at one point. We will fix $\hat{t} \in B$. By Corollary \ref{expdec2}, there exists sufficiently small $\epsilon_0>0$ such that for any $0<\epsilon<\epsilon_0$ and $t\in B$, we have 
\begin{equation}\label{asuhat}
\sum_{m=0}^n  \int_{\{ \varphi_t <- \epsilon^{-1}\}}   |\varphi_t| (\chi_t+\ddbar \varphi_t)^m\wedge (\chi_t)^{n-m} < \epsilon^3. 
\end{equation}

\begin{lemma}  \label{62} For any $0<\epsilon<\epsilon_0$,  there exists  an open neighborhood $\hat{U} \subset \cX_{\hat{t}}$ of $\cX_{\hat{t}} \setminus \cX_{\hat{t}}^{\textnormal{reg}}$, such that 
 $$\sum_{m=0}^n \int_{\hat{U}} (\chi_{\hat{t}} + \ddbar \varphi_{\hat{t}})^m \wedge (\chi_{\hat{t}})^{n-m} < \epsilon^2$$
and
$$ \{ \varphi_{\hat{t}} < -\epsilon^{-1} \} \subset  \hat{U}.$$

\end{lemma}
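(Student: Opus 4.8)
The plan is to produce the neighborhood $\hat U$ in two stages: first shrink toward the singular set $\cX_{\hat t}\setminus \cX_{\hat t}^{\mathrm{reg}}$ using the fact that the mixed Monge--Amp\`ere masses $(\chi_{\hat t}+\ddbar\varphi_{\hat t})^m\wedge\chi_{\hat t}^{n-m}$ put no mass on any fixed proper analytic subset, and second enlarge slightly to absorb the sublevel set $\{\varphi_{\hat t}<-\epsilon^{-1}\}$ using the exponential decay estimate \eqref{asuhat}. First I would fix $\epsilon\in(0,\epsilon_0)$. Since $\varphi_{\hat t}\in\mathrm{PSH}(\cX_{\hat t},\chi_{\hat t})\cap L^\infty_{\mathrm{loc}}$ on $\cX_{\hat t}^{\mathrm{reg}}$ with vanishing Lelong numbers (from Section~3 and \cite{S,SSW}), the closed positive currents $(\chi_{\hat t}+\ddbar\varphi_{\hat t})^m\wedge\chi_{\hat t}^{n-m}$ are well-defined on $\cX_{\hat t}^{\mathrm{reg}}$ and, being cohomologous to $\chi_{\hat t}^n$ on a resolution, have total mass bounded independent of $\hat t$; more to the point, each such current extended by zero across $\cX_{\hat t}\setminus\cX_{\hat t}^{\mathrm{reg}}$ is a positive measure giving zero mass to the proper analytic set $\cX_{\hat t}\setminus\cX_{\hat t}^{\mathrm{reg}}$.

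Next I would use outer regularity of these finitely many positive Borel measures: there is an open neighborhood $W$ of $\cX_{\hat t}\setminus\cX_{\hat t}^{\mathrm{reg}}$ in $\cX_{\hat t}$ with
$$\sum_{m=0}^n\int_{W}(\chi_{\hat t}+\ddbar\varphi_{\hat t})^m\wedge\chi_{\hat t}^{n-m}<\tfrac12\,\epsilon^2.$$
Then I set $\hat U = W\cup\{\varphi_{\hat t}<-\epsilon^{-1}\}$. The second condition $\{\varphi_{\hat t}<-\epsilon^{-1}\}\subset\hat U$ is then automatic, and $\hat U$ is open because $\varphi_{\hat t}$ is upper semicontinuous (indeed continuous on $\cX_{\hat t}^{\mathrm{reg}}$, and the set $\{\varphi_{\hat t}<-\epsilon^{-1}\}$ already accumulates only on the non-klt locus, which lies in $\cX_{\hat t}\setminus\cX_{\hat t}^{\mathrm{reg}}\subset W$). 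For the mass bound, split the integral over $\hat U$ into the part over $W$ and the part over $\{\varphi_{\hat t}<-\epsilon^{-1}\}\setminus W$; the first is $<\tfrac12\epsilon^2$ by construction, and the second is bounded using \eqref{asuhat}: on $\{\varphi_{\hat t}<-\epsilon^{-1}\}$ we have $|\varphi_{\hat t}|>\epsilon^{-1}$, hence
$$\sum_{m=0}^n\int_{\{\varphi_{\hat t}<-\epsilon^{-1}\}}(\chi_{\hat t}+\ddbar\varphi_{\hat t})^m\wedge\chi_{\hat t}^{n-m}
\le \epsilon\sum_{m=0}^n\int_{\{\varphi_{\hat t}<-\epsilon^{-1}\}}|\varphi_{\hat t}|\,(\chi_{\hat t}+\ddbar\varphi_{\hat t})^m\wedge\chi_{\hat t}^{n-m}<\epsilon\cdot\epsilon^3=\epsilon^4<\tfrac12\epsilon^2$$
for $\epsilon$ small. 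Adding the two pieces gives the required bound $<\epsilon^2$.

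The main obstacle is making rigorous the claim that the mixed currents $(\chi_{\hat t}+\ddbar\varphi_{\hat t})^m\wedge\chi_{\hat t}^{n-m}$ extend across the singular locus of $\cX_{\hat t}$ as positive measures of finite total mass not charging $\cX_{\hat t}\setminus\cX_{\hat t}^{\mathrm{reg}}$ — this is where I would invoke the Bedford--Taylor theory together with the vanishing Lelong number / bounded potential results of \cite{S,SSW}, passing to a log resolution $\mu:\tilde\cX_{\hat t}\to\cX_{\hat t}$ where $\mu^*\chi_{\hat t}$ is smooth semipositive and $\varphi_{\hat t}\circ\mu$ is $\mu^*\chi_{\hat t}$-psh and bounded above with controlled poles, so that the Monge--Amp\`ere products are legitimately defined there and push forward. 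Once that is in place, outer regularity of Radon measures on the compact space $\cX_{\hat t}$ and the elementary splitting above finish the argument with only routine estimates.
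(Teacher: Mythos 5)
Your proposal is correct and follows essentially the same route as the paper: take a small-mass open neighborhood of the singular locus (the paper only needs one of $\cX_{\hat t}^{\mathrm{sing}} \cap \{\varphi_{\hat t}\ge -\epsilon^{-1}\}$, where $\varphi_{\hat t}$ is bounded, which slightly simplifies the non-charging argument), union it with the sublevel set $\{\varphi_{\hat t}<-\epsilon^{-1}\}$, and bound the latter's mass by $\epsilon\cdot\epsilon^{3}$ via the decay estimate (\ref{asuhat}). The extra care you take in justifying that the mixed Monge--Amp\`ere measures do not charge the analytic singular set is consistent with what the paper invokes (pluripolarity of $\cX_{\hat t}^{\mathrm{sing}}$ and the $\cE^1$/vanishing Lelong number facts), so no gap.
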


\begin{proof} $\varphi_{\hat{t}}$ is uniformly bounded on $\cX_{\hat{t}} \setminus \{ \varphi_{\hat{t}}< -\epsilon^{-1}\}$. Therefore there exists an open set $U$ containing $\cX_{\hat{t}}^{\textnormal{sing}} \cap \{ \varphi_{\hat{t}} \geq -\epsilon^{-1}\}$ such that 
$$\sum_{m=0}^n  \int_U    (\chi_{\hat{t}}+\ddbar \varphi_{\hat{t}})^m\wedge (\chi_{\hat{t}})^{n-m} < \epsilon^4, $$
where $\cX_{\hat{t}}^{\textnormal{sing}}= \cX_t \setminus \cX_{\hat{t}}^{\textnormal{reg}}$ is the singular set of $\cX_t$, 
since $\cX_{\hat{t}}^{\textnormal{sing}}$ is a closed pluriclosed set. 
The lemma is then proved by choosing $\hat{U} = U \cup \{ \varphi_{\hat{t}} < - \epsilon^{-1} \}$.  

\end{proof}

We choose open neighborhoods $U$ and $V$ of $\cX \setminus \cX^{\textnormal{reg}}$ in $\cX$ such that
\begin{equation}\label{conU}
\{ \varphi < - \epsilon^{-1}\}\subset \subset U\subset\subset V, ~ V\cap \cX_{\hat{t}} \subset \hat{U} .
\end{equation}
Since $\cX^{\textnormal{reg}}$ is locally a smooth holomorphic product, by partition of unity there exist an open neighborhood $B_{\delta_0}$ of $\hat{t}$ in $B$ and  a collection of finitely many smooth functions 
\begin{equation}\label{parti}
\{\rho_\alpha\}_{\alpha}
\end{equation}
 on $\cX$ satisfying
\begin{enumerate}
\item the support of $\rho_\alpha$ is biholomorphic to $\{ |t-\hat{t}|<\delta_0\}\times \mathbb{B} \subset  (\cX\setminus U)$, where $\mathbb{B}$ is a unit ball in $\mathbb{C}^n$ with $\{t\} \times \mathbb{B} \subset \cX_t$. 

\medskip

\item $0\leq \rho_\alpha \leq 1$.

\medskip

\item For any $p\in \pi^{-1}(B_{\delta_0}) \setminus V$, 
$$\sum_{\alpha} \rho_\alpha(p) = 1. $$
\end{enumerate}
Since $\varphi_t$ converges smoothly to $\varphi_{\hat{t}}$ as $t\rightarrow \hat{t}$ on $\cX^{\textnormal{reg}}$, by straightforward calculation on local Euclidean spaces, for each $\alpha$, we have 
$$\lim_{t\rightarrow \hat{t}} \int_{\cX_t} \rho_\alpha (\chi_t + \ddbar \varphi_t)^m \wedge (\chi_t)^{n-m} = \int_{\cX_{\hat{t}}} \rho_\alpha (\chi_{\hat{t}} + \ddbar \varphi_{\hat{t}})^m \wedge (\chi_{\hat{t}})^{n-m} $$
uniformly for $t\in B_{\delta_0}$. 
Since $ \cup_\alpha \{ \textnormal{supp}\ \rho_\alpha\} \subset (\cX \setminus U)$, we have 
\begin{eqnarray*}
&&\lim_{t\rightarrow \hat{t}} \int_{\cX_t\setminus U }  (\chi_t + \ddbar \varphi_t)^m \wedge (\chi_t)^{n-m} \\
&\geq &  \int_{\cX_{\hat{t}}\setminus V}   (\chi_{\hat{t}} + \ddbar \varphi_{\hat{t}})^m \wedge (\chi_{\hat{t}})^{n-m}\\
&\geq&  \int_{\cX_{\hat{t}}}   (\chi_{\hat{t}} + \ddbar \varphi_{\hat{t}})^m \wedge (\chi_{\hat{t}})^{n-m} -  \int_{ \hat{U} }  (\chi_{\hat{t}} + \ddbar \varphi_{\hat{t}})^m \wedge (\chi_{\hat{t}})^{n-m}\\
&\geq & [\chi_{\hat{t}}]^n - \epsilon^2 
\end{eqnarray*}
by Lemma \ref{62}.
In particular,  $[\chi_t]^n$ is a topological number independent of $t$ and by Proposition \ref{expdec} or the fact that $\varphi_{\hat{t}}\in \cE^1$ (c.f. \cite{BG}), we have 
$$\int_{\cX_t}  (\chi_t + \ddbar \varphi_t)^m \wedge (\chi_t)^{n-m} = [\chi_t]^n.$$
We then have the following lemma.

\begin{lemma} \label{63} For any $0<\epsilon<\epsilon_0$, there exists $0<\delta<\delta_0$, such that for any $t$  with $|t - \hat{t}|< \delta$ and $0\leq m \leq n$, we have
$$\int_{\cX_t \cap U} (\chi_t + \ddbar \varphi_t)^m \wedge (\chi_t)^{n-m} <   2\epsilon^2$$
and
$$\int_{\cX_t \setminus U} (\chi_t + \ddbar \varphi_t)^m \wedge (\chi_t)^{n-m} \geq  [\chi_{\hat{t}}]^n - 2\epsilon^2, $$
where $U$ is constructed as in (\ref{conU}). 

\end{lemma}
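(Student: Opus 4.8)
The plan is to upgrade the two "limsup/liminf" statements established just before the lemma into a uniform estimate on a definite neighborhood of $\hat t$, using only that the mass $[\chi_t]^n$ is independent of $t$. First recall what we already have: since $\varphi_t \to \varphi_{\hat t}$ smoothly on $\cX^{\reg}$, and since the partition functions $\rho_\alpha$ are supported in $\cX\setminus U$ where all fibers are smooth, for each fixed $\alpha$ and each $0\le m\le n$ the quantity $\int_{\cX_t}\rho_\alpha(\chi_t+\ddbar\varphi_t)^m\wedge\chi_t^{n-m}$ converges to its value at $\hat t$ as $t\to\hat t$, uniformly in the sense that there are only finitely many $\alpha$. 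Consequently, on $\pi^{-1}(B_{\delta_0})\setminus V$ we have $\sum_\alpha\rho_\alpha\equiv 1$, so
$$\liminf_{t\to\hat t}\int_{\cX_t\setminus U}(\chi_t+\ddbar\varphi_t)^m\wedge\chi_t^{n-m}\ \ge\ \sum_\alpha\int_{\cX_{\hat t}}\rho_\alpha(\chi_{\hat t}+\ddbar\varphi_{\hat t})^m\wedge\chi_{\hat t}^{n-m}\ \ge\ \int_{\cX_{\hat t}\setminus V}(\chi_{\hat t}+\ddbar\varphi_{\hat t})^m\wedge\chi_{\hat t}^{n-m},$$
and the latter is $\ge [\chi_{\hat t}]^n-\epsilon^2$ by Lemma \ref{62} (using $V\cap\cX_{\hat t}\subset\hat U$). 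So the liminf inequality is in hand; the content of the lemma is to replace the liminf by a genuine inequality valid for all $t$ in a small ball.

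The key step is the following elementary argument. Since there are only finitely many functions $\rho_\alpha$ and each integral $\int_{\cX_t}\rho_\alpha(\chi_t+\ddbar\varphi_t)^m\wedge\chi_t^{n-m}$ is continuous at $\hat t$, we may choose $0<\delta<\delta_0$ so small that for all $|t-\hat t|<\delta$ and all $0\le m\le n$,
$$\Big|\sum_\alpha\int_{\cX_t}\rho_\alpha(\chi_t+\ddbar\varphi_t)^m\wedge\chi_t^{n-m}-\sum_\alpha\int_{\cX_{\hat t}}\rho_\alpha(\chi_{\hat t}+\ddbar\varphi_{\hat t})^m\wedge\chi_{\hat t}^{n-m}\Big|<\epsilon^2.$$
Because $0\le\rho_\alpha\le 1$ with supports in $\cX\setminus U$, the left-hand sum at $t$ is $\le\int_{\cX_t\setminus U}(\chi_t+\ddbar\varphi_t)^m\wedge\chi_t^{n-m}$, while at $\hat t$ the sum is $\ge\int_{\cX_{\hat t}\setminus V}(\chi_{\hat t}+\ddbar\varphi_{\hat t})^m\wedge\chi_{\hat t}^{n-m}\ge[\chi_{\hat t}]^n-\epsilon^2$ by Lemma \ref{62}. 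Combining these with the displayed inequality gives
$$\int_{\cX_t\setminus U}(\chi_t+\ddbar\varphi_t)^m\wedge\chi_t^{n-m}\ \ge\ [\chi_{\hat t}]^n-2\epsilon^2,$$
which is the second assertion. The first assertion then follows by subtraction: since $\int_{\cX_t}(\chi_t+\ddbar\varphi_t)^m\wedge\chi_t^{n-m}=[\chi_t]^n=[\chi_{\hat t}]^n$ (the mass is a cohomological invariant, independent of $t$; alternatively $\varphi_t\in\cE^1$ by \cite{BG} so no mass escapes), we get $\int_{\cX_t\cap U}(\chi_t+\ddbar\varphi_t)^m\wedge\chi_t^{n-m}\le 2\epsilon^2$.

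The main obstacle — and the only genuine subtlety — is justifying that the measures $(\chi_t+\ddbar\varphi_t)^m\wedge\chi_t^{n-m}$ do not lose mass as $t\to\hat t$: a priori mass could concentrate on the singular set $\cX_{\hat t}^{\sing}$ or escape to where $\varphi_t\to-\infty$. This is exactly what is controlled by Corollary \ref{expdec2} via the choice (\ref{asuhat}) (mass in $\{\varphi_t<-\epsilon^{-1}\}$ is negligible) together with Lemma \ref{62} (mass near the singular locus but in the bounded region is negligible), and by the constancy of the total mass $[\chi_t]^n$. Once those inputs are granted — and they are, being established earlier in the paper — the proof is the soft finite-partition-of-unity bookkeeping above; no Monge–Ampère analysis is needed at this stage beyond the smooth convergence of Lemma \ref{limi42}. $\hfill\Box$
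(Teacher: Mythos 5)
Your proof is correct and follows essentially the same route as the paper: the paper derives Lemma \ref{63} from exactly this combination of the smooth convergence of the $\rho_\alpha$-localized integrals (finitely many terms, supports in $\cX\setminus U$, $\sum_\alpha\rho_\alpha=1$ off $V$), Lemma \ref{62} applied via $V\cap\cX_{\hat t}\subset\hat U$, and the constancy of the total mass $[\chi_t]^n$ (via Proposition \ref{expdec} or $\varphi_t\in\cE^1$). Your epsilon--delta bookkeeping is just an explicit spelling-out of the limit statements preceding the lemma, so there is nothing to add.
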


\begin{corollary} \label{62cor} For any $0<\epsilon<\epsilon_0$, there exists $0<\delta<\delta_0$ such that for any $t$ with $|t-\hat{t}|<\delta$, we have 
$$ \sum_{m=0}^n \int_{\cX_t \cap U} |\varphi_t|(\chi_t + \ddbar \varphi_t)^m \wedge (\chi_t)^{n-m} <  2(n+1) \epsilon.$$

\end{corollary}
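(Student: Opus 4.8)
The plan is to bound the integral over $\cX_t\cap U$ by cutting the fibre at the level $\{\varphi_t=-\epsilon^{-1}\}$: on the sublevel set $|\varphi_t|$ is large but the density is controlled directly by the uniform exponential decay (\ref{asuhat}), while on the complement $|\varphi_t|\le\epsilon^{-1}$ and the total Monge--Amp\`ere mass over $U$ is already known to be tiny by Lemma \ref{63}. To set things up I would first shrink $\epsilon_0$ if necessary so that $\epsilon_0^{-1}>A$, where $A$ is the uniform bound $\sup_{\cX_t}\varphi_t\le A$ from \cite{SSW} used throughout this section; then for every $0<\epsilon<\epsilon_0$ and every $t$ one has $|\varphi_t|\le\epsilon^{-1}$ on the set $\{\varphi_t\ge-\epsilon^{-1}\}$. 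Fix such an $\epsilon$ and let $\delta\in(0,\delta_0)$ be the radius produced by Lemma \ref{63} for this $\epsilon$.

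Now take $|t-\hat t|<\delta$ and split $\cX_t\cap U$ into $(\cX_t\cap U)\cap\{\varphi_t\ge-\epsilon^{-1}\}$ and $(\cX_t\cap U)\cap\{\varphi_t<-\epsilon^{-1}\}$. On the first piece, pull $|\varphi_t|\le\epsilon^{-1}$ out of the integral and apply Lemma \ref{63} termwise:
$$\sum_{m=0}^n\int_{(\cX_t\cap U)\cap\{\varphi_t\ge-\epsilon^{-1}\}}|\varphi_t|\,(\chi_t+\ddbar\varphi_t)^m\wedge\chi_t^{n-m}\ \le\ \epsilon^{-1}\sum_{m=0}^n\int_{\cX_t\cap U}(\chi_t+\ddbar\varphi_t)^m\wedge\chi_t^{n-m}\ <\ 2(n+1)\epsilon .$$
On the second piece, which is contained in $\{\varphi_t<-\epsilon^{-1}\}$ (indeed $\{\varphi<-\epsilon^{-1}\}\ssubset U$ by (\ref{conU})), apply the uniform estimate (\ref{asuhat}):
$$\sum_{m=0}^n\int_{(\cX_t\cap U)\cap\{\varphi_t<-\epsilon^{-1}\}}|\varphi_t|\,(\chi_t+\ddbar\varphi_t)^m\wedge\chi_t^{n-m}\ \le\ \sum_{m=0}^n\int_{\{\varphi_t<-\epsilon^{-1}\}}|\varphi_t|\,(\chi_t+\ddbar\varphi_t)^m\wedge\chi_t^{n-m}\ <\ \epsilon^3 .$$
Adding the two contributions gives $\sum_{m=0}^n\int_{\cX_t\cap U}|\varphi_t|\,(\chi_t+\ddbar\varphi_t)^m\wedge\chi_t^{n-m}<2(n+1)\epsilon+\epsilon^3$; to land strictly below $2(n+1)\epsilon$ one simply reruns the whole argument with $\epsilon$ replaced by $\epsilon/2$ (or shrinks $\epsilon_0$ further), which changes nothing essential.

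I do not expect a genuine obstacle in this step, since it is essentially a bookkeeping consequence of the two inputs already established. The only point requiring care is the legitimacy of the bound $|\varphi_t|\le\epsilon^{-1}$ on $\{\varphi_t\ge-\epsilon^{-1}\}$, which is precisely why the normalization $\epsilon_0^{-1}>A$ is recorded at the outset. All the real content — the uniform smallness of the Monge--Amp\`ere mass of $\cX_t\cap U$ for $t$ near $\hat t$, which rests on the smooth convergence $\varphi_t\to\varphi_{\hat t}$ on $\cX^{\reg}$ and the partition-of-unity comparison carried out just above — has been packaged into Lemma \ref{63}, while the part where $|\varphi_t|$ is unbounded is made harmless, uniformly in $t$, by the exponential decay of Corollary \ref{expdec2}.
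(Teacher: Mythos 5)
Your proposal is correct and follows essentially the same route as the paper: split $\cX_t\cap U$ at the level set $\{\varphi_t=-\epsilon^{-1}\}$, bound $|\varphi_t|$ by $\epsilon^{-1}$ on the upper piece and invoke the mass bound of Lemma \ref{63}, and control the lower piece by the uniform decay estimate (\ref{asuhat}). Your bookkeeping (recording $\epsilon_0^{-1}>A$ and absorbing the extra $\epsilon^3$ by rerunning with $\epsilon/2$) is in fact slightly more careful than the paper's own write-up, but the argument is the same.
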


\begin{proof} By Lemma \ref{63}, we have
\begin{eqnarray*}
&&  \int_{\cX_t \cap U\cap \{ \varphi_t\geq - \epsilon^{-1} \} } |\varphi_t|(\chi_t + \ddbar \varphi_t)^m \wedge (\chi_t)^{n-m} \\
&<&  \int_{\cX_t \cap U\cap \{ \varphi_t\geq - \epsilon^{-1} \} } \epsilon (\chi_t + \ddbar \varphi_t)^m \wedge (\chi_t)^{n-m}\\
&<& 2\epsilon.
\end{eqnarray*}
The corollary follows by combining the above estimate and the assumption (\ref{asuhat}).

\end{proof}

\begin{lemma} \label{64} For any $0<\epsilon<\epsilon_0$, there exists $0<\delta< \delta_0$ such that for $|t-\hat{t}|<\delta$, 
$$ \sum_{m=0}^n \left| \int_{\cX_t}  \varphi_t (\chi_t + \ddbar \varphi_t)^m \wedge (\chi_t)^{n-m}  -  \int_{\cX_{\hat{t}}}  \varphi_{\hat{t}} (\chi_{\hat{t}} + \ddbar \varphi_{\hat{t}})^m \wedge (\chi_{\hat{t}})^{n-m}  \right| <   \epsilon, $$
i.e., 
$$ |\psi_{WP}(t) - \psi_{WP}(\hat{t})|< \epsilon. $$

\end{lemma}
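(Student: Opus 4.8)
The plan is to show that $\psi_{WP}$ is continuous at the arbitrary point $\hat t$ by splitting each integral $\int_{\cX_t}\varphi_t(\chi_t+\ddbar\varphi_t)^m\wedge\chi_t^{n-m}$ into the contribution from $\cX_t\cap U$ and the contribution from $\cX_t\setminus U$, where $U$ is the neighborhood of the singular/degenerate locus constructed in (\ref{conU}). The point of the construction is that on $\cX_t\setminus U$ the integrand lives on a compact set of $\cX^{\textnormal{reg}}$, while the contribution over $\cX_t\cap U$ is controlled uniformly in $t$ by the volume estimates already proved.

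First I would handle the piece over $\cX_t\cap U$. For $|t-\hat t|<\delta$, Corollary \ref{62cor} gives $\sum_{m=0}^n\int_{\cX_t\cap U}|\varphi_t|(\chi_t+\ddbar\varphi_t)^m\wedge\chi_t^{n-m}<2(n+1)\epsilon$, and the same bound applies at $\hat t$ itself (taking $t=\hat t$, or directly from (\ref{asuhat}) together with Lemma \ref{62}). Hence the difference of the two ``$U$-parts'' (at $t$ and at $\hat t$) is $O(\epsilon)$. Next I would handle the piece over $\cX_t\setminus U$. Here $\overline{\cX_t\setminus U}$ is contained in a fixed compact subset $K$ of $\cX^{\textnormal{reg}}$, and by Lemma \ref{limi42} (equivalently the Corollary following it) $\varphi_t\to\varphi_{\hat t}$ smoothly on $K$ as $t\to\hat t$; moreover the domains $\cX_t\setminus U$ vary smoothly over $B_{\delta_0}$ since $\cX^{\textnormal{reg}}$ is locally a holomorphic product. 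Therefore, by the same local-Euclidean computation used just before Lemma \ref{63}, $\int_{\cX_t\setminus U}\varphi_t(\chi_t+\ddbar\varphi_t)^m\wedge\chi_t^{n-m}\to\int_{\cX_{\hat t}\setminus U}\varphi_{\hat t}(\chi_{\hat t}+\ddbar\varphi_{\hat t})^m\wedge\chi_{\hat t}^{n-m}$, so for $\delta$ small the difference of the ``outside-$U$ parts'' is also $O(\epsilon)$.

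Combining the two estimates: for $|t-\hat t|<\delta$,
$$\sum_{m=0}^n\left|\int_{\cX_t}\varphi_t(\chi_t+\ddbar\varphi_t)^m\wedge\chi_t^{n-m}-\int_{\cX_{\hat t}}\varphi_{\hat t}(\chi_{\hat t}+\ddbar\varphi_{\hat t})^m\wedge\chi_{\hat t}^{n-m}\right|$$
is bounded by the $U$-part discrepancy plus the $(\cX\setminus U)$-part discrepancy, each $O(\epsilon)$, hence (after renaming $\epsilon$, or choosing the thresholds with the constants absorbed) is $<\epsilon$. Dividing by $(n+1)V$ gives $|\psi_{WP}(t)-\psi_{WP}(\hat t)|<\epsilon$, which is the claim. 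Since $\hat t\in B$ was arbitrary, $\psi_{WP}\in C^0(B)$, proving Proposition \ref{conti}.

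The main obstacle is the step over $\cX_t\setminus U$: one must be sure that the smooth convergence $\varphi_t\to\varphi_{\hat t}$ on compact subsets of $\cX^{\textnormal{reg}}$ combines correctly with the variation of the integration domains $\cX_t\setminus U$ and of the forms $\chi_t$. This is where the local product structure of $\cX^{\textnormal{reg}}$ and the uniform $C^k$ bounds of Lemma 5.2 of \cite{SSW} are essential — the integrals must be pulled back to fixed Euclidean charts so that both the domain and the integrand depend continuously on $t$. The $U$-part, by contrast, needs no convergence statement at all, only the uniform-in-$t$ smallness coming from the volume decay estimates (Proposition \ref{expdec}, Corollary \ref{expdec2}) and Lemma \ref{63}; that is precisely why the neighborhood $U$ was engineered to contain $\{\varphi<-\epsilon^{-1}\}$.
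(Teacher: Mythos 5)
Your proposal is correct and follows essentially the same route as the paper: split each integral into the part near the singular/degenerate locus, which is made uniformly small via Corollary \ref{62cor} (equivalently (\ref{asuhat}) plus Lemmas \ref{62}--\ref{63}), and the part on a fixed compact subset of $\cX^{\textnormal{reg}}$, where the smooth convergence $\varphi_t\to\varphi_{\hat t}$ of Lemma \ref{limi42} gives convergence of the integrals after pulling back to product charts — the paper implements this last step with the partition of unity $\{\rho_\alpha\}$ of (\ref{parti}), which is exactly the device you allude to for handling the varying domains.
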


\begin{proof} By the same partition of unity as in (\ref{parti}), we have 
$$\lim_{t\rightarrow \hat{t}} \int_{\cX_t} \rho_\alpha \varphi_t (\chi_t + \ddbar \varphi_t)^m \wedge (\chi_t)^{n-m} = \int_{\cX_{\hat{t}}} \rho_\alpha \varphi_{\hat{t}} (\chi_{\hat{t}} + \ddbar \varphi_{\hat{t}})^m \wedge (\chi_{\hat{t}})^{n-m} $$
uniformly for $t\in B_{\delta_0}$. 
Let $U'= \cX\setminus \cup_\alpha \{ \textnormal{supp}\ \rho_\alpha\}$. Then
\begin{eqnarray*}
&&\lim_{t\rightarrow \hat{t}} \int_{\cX_t\setminus U' } \varphi_t (\chi_t + \ddbar \varphi_t)^m \wedge (\chi_t)^{n-m} \\
&\geq&  \int_{\cX_{\hat{t}}\setminus U'} \varphi_{\hat{t}} (\chi_{\hat{t}} + \ddbar \varphi_{\hat{t}})^m \wedge (\chi_{\hat{t}})^{n-m} - \epsilon^{-1} \lim_{t\rightarrow \hat{t}} \int_{\cX_t\cap  U'\cap \{ \varphi\geq - \epsilon^{-1} \}}(\chi_t + \ddbar \varphi_t)^m \wedge (\chi_t)^{n-m}\\
&& + \lim_{t\rightarrow \hat{t}} \int_{\cX_t \cap \{ \varphi < - \epsilon^{-1} \}} \varphi_t (\chi_t + \ddbar \varphi_t)^m \wedge (\chi_t)^{n-m}\\
&\geq& \int_{\cX_{\hat{t}}\setminus U'}   \varphi_{\hat{t}} (\chi_{\hat{t}} + \ddbar \varphi_{\hat{t}})^m \wedge (\chi_{\hat{t}})^{n-m}  - 3 \epsilon 
\end{eqnarray*}
and by similar argument,
\begin{eqnarray*}
&&\lim_{t\rightarrow \hat{t}} \int_{\cX_t\setminus U' } \varphi_t (\chi_t + \ddbar \varphi_t)^m \wedge (\chi_t)^{n-m} \\
&\leq&  \int_{\cX_{\hat{t}}\setminus U'} \varphi_{\hat{t}} (\chi_{\hat{t}} + \ddbar \varphi_{\hat{t}})^m \wedge (\chi_{\hat{t}})^{n-m} + \epsilon^{-1} \lim_{t\rightarrow \hat{t}} \int_{\cX_t\cap  U' \cap \{ \varphi\geq - \epsilon^{-1} \}}(\chi_t + \ddbar \varphi_t)^m \wedge (\chi_t)^{n-m}\\
&& - \lim_{t\rightarrow \hat{t}} \int_{\cX_t \cap \{ \varphi < - \epsilon^{-1} \}} \varphi_t (\chi_t + \ddbar \varphi_t)^m \wedge (\chi_t)^{n-m}\\
&\leq& \int_{\cX_{\hat{t}}\setminus U'}   \varphi_{\hat{t}} (\chi_{\hat{t}} + \ddbar \varphi_{\hat{t}})^m \wedge (\chi_{\hat{t}})^{n-m}  + 3\epsilon .
\end{eqnarray*}
On the other hand, by Corollary \ref{62cor}, for any $0<\epsilon<\epsilon_0$, there exists $\delta>0$ such that for $|t-\hat{t}| < \delta$, 
$$\int_{\cX_t \cap U}|\varphi_t | (\chi_t + \ddbar \varphi_t)^m \wedge (\chi_t)^{n-m} \leq (n+1) \epsilon.$$
Combining the above estimates,  
\begin{eqnarray*}
&& \left| \lim_{t\rightarrow \hat{t}} \int_{\cX_t  } \varphi_t (\chi_t + \ddbar \varphi_t)^m \wedge (\chi_t)^{n-m} - \int_{\cX_{\hat{t}}}  \varphi_{\hat{t}} (\chi_{\hat{t}} + \ddbar \varphi_{\hat{t}})^m \wedge (\chi_{\hat{t}})^{n-m}   \right|  \\
&\leq& 2 \epsilon + \int_{\cX_{\hat{t}}\cap U }  |\varphi_{\hat{t}}| (\chi_{\hat{t}} + \ddbar \varphi_{\hat{t}})^m \wedge (\chi_{\hat{t}})^{n-m} \\
&<&3\epsilon. 
\end{eqnarray*} 
The lemma is proved by choosing $t$ sufficiently close to $\hat{t}$.

\end{proof}

Now Proposition \ref{conti} immediately follows from Lemma \ref{64} and the result in \cite{MA} that $\int_{\cX_t} \chi^{n+1}=\ddbar \psi_\chi$ for some $\psi_\chi\in \textnormal{PSH}(B)\cap C^0(B)$.

\begin{corollary} \label{conti2} There exists $\Phi_{WP}\in \textnormal{PSH}(B)\cap C^0(B)$ such that
$$\omega_{\rm WP}= \ddbar \Phi_{WP}.$$

\end{corollary}

\section{Proof of Theorem \ref{main}}

In order to prove Theorem \ref{main}, we have to  replace the smooth base by the compactified KSB moduli space in Corollary \ref{conti2}. As in section 7 of \cite{SSW},  we will replace $\overline{\cM}_\KSB$ by a smooth model as follows. There exists a finite map
$$\phi: B \rightarrow \overline{\cM}_\KSB$$ together with a stable family
$$\pi: \cX \rightarrow B $$
such that for any $p\in \overline{\cM}_\KSB$, the fibres $(\phi\circ\pi)^{-1}(p)$ are K\"ahler-Einstein varieties correspond to the point $p$ in the moduli space. 
After normalization and resolution of singularities we can assume $B$ is smooth and $\phi$ is generically finite. We can define $\omega_{\rm WP}$ globally on $\overline{\cM}_\KSB$ by pushing forward the Weil-Petersson current $\omega_{WP,B}$ on $B$ by $\phi$. It is shown in \cite{SSW} that the Weil-Petersson current on $\overline{\cM}_\KSB$ has bounded local potentials and is independent of the choice of $B$. Furthermore, $\omega_{\rm WP}$ is the curvature of the CM line bundle $\cL_{CM} \rightarrow \overline{\cM}_\KSB$.
Let $h$ be the smooth hermitian metric on $\cL_{CM}$ with $\omega_{\rm FS}=\textnormal{Ric}(h)$ being the Fubini-Study metric induced by a projective embedding from the linear system of a sufficiently large power of  $\cL_{CM}$.
Then by Corollary \ref{conti2}, we have 
$$\phi^*\omega_{WP, B} = \phi^*\omega_{\rm FS} + \ddbar \varphi_{WP, B}$$
for some $\varphi_{WP, B}\in \textnormal{PSH}(B, \phi^*\omega_{\rm FS})\cap C^0(B)$. In particular, it is proved in \cite{SSW} that $\varphi_{WP, B}$ coincides with the pullback of a bounded function $\varphi_{WP} \in \textnormal{PSH}(\overline{\cM}_\KSB, \omega_{\rm FS})\cap L^\infty(\overline{\cM}_\KSB)$  on the regular set of $\pi$. 
Let $F$ be an irreducible component of a fibre of $\pi:B \rightarrow \overline{\cM}_\KSB$ with $\dim F\geq 1$. Obviously, $\phi^*\cL_{CM}|_F =0$ and $\omega_{\rm FS}|_F=0$. This implies that $\varphi_{WP, B}$ is plurisubharmonic and continuous on $F$, hence $\varphi_{WP, B}$ must be constant along $F$. Therefore $\varphi_{WP, B}$ can descend to a continuous function on $\overline{\cM}_\KSB$. This completes the proof of Theorem \ref{main}.

\section{Proof of Theorem \ref{main2}}

In this section, we will complete the proof of Theorem \ref{main2}.
Let $\cX\ra B$ be a stable family of canonically polarized manifolds,  where $B\sub\C^d$ is a ball.
Let $\cE_{slc}$ be the set of all non-klt locus of $\cX_t$ for $t\in B$. 

 For $t\in B^\circ$,  let $\varphi_t$ satisfy $\o_t^n=(\chi+\ddbar\varphi_t)^n=e^{\varphi_t}\O_t$.  The following is the main result of of this section to bound the volume of the level sets of the K\"ahler-Einstein potentials $\varphi_t$.  We let $g_t$ be the K\"ahler-Einstein metric associated to $\o_t$.
 
 The following theorem is proved in \cite{SSW}.
 
\begin{theorem} For any $t_j \in B^\circ$ with $t_j \rightarrow t_\infty$, $(\cX_{t_j}, g_{t_j})$ converges in pointed Gromov-Hausdorff topology to the metric completion of $(\cX_{t_\infty}^\circ, g_{t_\infty})$, where $\cX_{t_\infty}^\circ$ is the smooth part of $\cX_{t_\infty}$ and $g_{t_\infty}$ is the smooth K\"ahler-Einstein metric on  $\cX_{t_\infty}^\circ$ that extends to the unique K\"ahler-Einstein current on $\cX_{t_\infty}$. In particular, the metric completion of $(\cX_{t_\infty}^\circ, g_{t_\infty})$ is homeomorphic to $\cX_{t_\infty}\setminus \cE_{scl}$ and the convergence is smooth on $(\cX_{t_\infty}^\circ, g_{t_\infty})$.

\end{theorem}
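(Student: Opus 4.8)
The plan is to upgrade the local smooth convergence of the Kähler--Einstein potentials on the regular part to a global pointed Gromov--Hausdorff statement, by combining Gromov's precompactness theorem with Cheeger--Colding theory, and then to pin down the limit by a volume count. The analytic inputs are: (i) $g_{t_j}\to g_{t_\infty}$ in $C^\infty_{loc}(\cX_{t_\infty}^\circ)$, which follows from Lemma~\ref{limi42} together with the uniform local $C^k$ bounds for $\varphi$ on $\cX^{\reg}$ and the fact that $\cX^{\reg}$ is locally a smooth holomorphic product over $B$; (ii) the uniform Ricci identity $\Ric(g_{t_j})=-g_{t_j}$; (iii) conservation of volume: $\int_{\cX_t}\omega_t^n=c_1(K_{\cX_t})^n=:V$ is a topological constant, independent of $t$, and $\int_{\cX_{t_\infty}}\omega_{t_\infty}^n=V$ as well since $\cX_{t_\infty}$ is stable and therefore carries a Kähler--Einstein current of volume $c_1(K_{\cX_{t_\infty}})^n$; and (iv) the uniform sublevel-set volume bounds of Proposition~\ref{expdec}. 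Note there is \emph{no} uniform diameter bound along $t_j\to t_\infty$ --- the fibres develop long necks --- which is why the statement is pointed.

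First I would fix a local holomorphic section $t\mapsto p_t\in\cX_t^{\reg}$ near $t_\infty$ and, by Gromov precompactness, pass to a subsequence along which $(\cX_{t_j},g_{t_j},p_{t_j})$ converges in the pointed Gromov--Hausdorff topology to a complete length space $(Y,d_Y,y_\infty)$. The smooth convergence near $p_{t_\infty}$ gives a uniform lower bound on the volume of unit balls, so $Y$ is non-collapsed and Colding's volume convergence theorem applies: $\Vol\big(B_{g_{t_j}}(p_{t_j},r)\big)\to\cH^{2n}\big(B_{d_Y}(y_\infty,r)\big)$ for every $r$, and the limit measure is $\cH^{2n}$. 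Next, the $C^\infty_{loc}$ convergence produces an isometric embedding $\iota\colon(\cX_{t_\infty}^\circ,g_{t_\infty})\hookrightarrow(Y,d_Y)$ onto an open subset (isometric because the regular part of a non-collapsed Ricci limit is geodesically convex, by Cheeger--Colding): any compact $K\subset\cX_{t_\infty}^\circ$ can be joined to $p_{t_\infty}$ inside a compact $K'\subset\cX_{t_\infty}^\circ$, and for large $j$ the set $K'$ embeds into $\cX_{t_j}$ near $p_{t_j}$ by maps $C^k$-close to isometries, so $K$ appears in the pointed limit. Since $\cH^{2n}$ agrees with the Riemannian volume on $\iota(\cX_{t_\infty}^\circ)$, we get $\cH^{2n}\big(\iota(\cX_{t_\infty}^\circ)\big)=\int_{\cX_{t_\infty}}\omega_{t_\infty}^n=V$, whereas $\cH^{2n}\big(B_{d_Y}(y_\infty,r)\big)=\lim_j\Vol\big(B_{g_{t_j}}(p_{t_j},r)\big)\le V$ for all $r$, hence $\cH^{2n}(Y)\le V$. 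So $Y\setminus\iota(\cX_{t_\infty}^\circ)$ is $\cH^{2n}$-null; as $Y$ is the support of $\cH^{2n}$, the open set $\iota(\cX_{t_\infty}^\circ)$ is dense, $\iota$ extends to an isometry from the metric completion of $(\cX_{t_\infty}^\circ,g_{t_\infty})$ onto $Y$, and --- the limit being independent of the subsequence --- the whole sequence converges, smoothly on $\cX_{t_\infty}^\circ$.

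It remains to identify $Y\cong\cX_{t_\infty}\setminus\cE_{slc}$ topologically, the homeomorphism restricting to the identity on $\cX_{t_\infty}^\circ$. I would argue in two steps. First, the non-klt locus $\cE_{slc}=\{\varphi_{t_\infty}=-\infty\}$ lies at infinite distance: near such a point the Kähler--Einstein metric is asymptotic to a complete cusp-type metric (the higher-dimensional analogue of the one-dimensional Poincaré cusp), so every curve tending to $\cE_{slc}$ has infinite length; moreover Proposition~\ref{expdec} shows the volume of a shrinking neighborhood of $\cE_{slc}$ tends to zero, so no limiting mass concentrates there. Second, along the klt part of the singular locus of $\cX_{t_\infty}$, the $C^0$-bound of Proposition~\ref{C0} keeps $\varphi_{t_\infty}$ locally bounded, so $\omega_{t_\infty}$ is dominated by a fixed smooth metric on a log resolution near such a point; standard potential-theoretic estimates for the Monge--Ampère equation~(\ref{keqn}) then place such points at finite distance, and since the singular locus of the normal variety $\cX_{t_\infty}$ has complex codimension at least two it does not locally disconnect $\cX_{t_\infty}^\circ$ around them, so the completion reattaches exactly those points with their Zariski topology. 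Combining the two steps yields the homeomorphism.

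The principal obstacle is this last identification near the klt singularities: one must simultaneously establish that such points lie at finite distance from the interior, that the Cheeger--Colding limit reattaches precisely them and nothing else, and that the reattached locus carries the correct, algebraically defined topology. This requires marrying the structure theory of non-collapsed Ricci limit spaces with the complex Monge--Ampère equation~(\ref{keqn}) and the klt condition, for which the quantitative estimates of Propositions~\ref{C0} and~\ref{expdec} supply the needed analytic input; the complete argument is carried out in \cite{SSW}.
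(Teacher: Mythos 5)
The paper itself offers no proof of this statement: it is quoted from \cite{SSW} (``The following theorem is proved in \cite{SSW}''), so the intended ``proof'' here is a citation, and your sketch is in effect a reconstruction of the strategy of that reference --- pointed Gromov precompactness under $\Ric(g_t)=-g_t$, non-collapsing at a base point in the regular part, Colding volume convergence, the smooth convergence of potentials on $\cX^{\reg}$ from Lemma \ref{limi42}, a volume count to pin down the limit, and a final deferral of the identification step to \cite{SSW}, which is exactly what the paper does.

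If, however, the sketch is meant to stand on its own, two steps are genuinely incomplete. First, knowing that $\iota(\cX_{t_\infty}^\circ)$ embeds isometrically-on-compacta into $Y$ and has full $2n$-dimensional measure does not yet give that $Y$ is the metric completion of $(\cX_{t_\infty}^\circ,g_{t_\infty})$: one must show that the distance in $Y$ between two points of the image is approximated by paths staying in the image, i.e.\ that the intrinsic and restricted metrics agree. Your appeal to geodesic convexity of the regular set is really to Colding--Naber rather than Cheeger--Colding, and in any case it concerns the regular set of $Y$, which you have not shown coincides with (or is exhausted by) $\iota(\cX_{t_\infty}^\circ)$; in \cite{SSW} this is handled by codimension/capacity estimates for the metric singular set together with the partial $C^0$-estimate. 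Second, and more seriously, the homeomorphism of the completion with $\cX_{t_\infty}\setminus\cE_{slc}$ in its complex-analytic topology does not follow from ``no volume concentrates near $\cE_{slc}$'' plus ``klt points lie at finite distance'': identifying the limit with the algebraic fibre requires uniformly bounded pluricanonical sections separating points (the $L^2$/partial $C^0$ machinery used, e.g., in the proof of Lemma \ref{0bound81}), and the cusp behaviour at $\cE_{slc}$ and the finite-distance/bounded-potential behaviour at klt points are themselves theorems of \cite{S,SSW}, not consequences of Propositions \ref{C0} and \ref{expdec} alone. Since these heavy steps are precisely the content of \cite{SSW}, your deferral is appropriate, but the intermediate assertions should be presented as citations rather than as corollaries of the estimates established in this paper.
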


In fact, it is proved in \cite{S} (that $\varphi_t$ is always bounded away from $\cE_{slc}$. 

\begin{lemma} \label{0bound81} Let $p_t\in \cX_t$  be a continuous section of $\pi: \cX^\circ \rightarrow B^\circ$ such that 
$$\{p_t\}_{t\in B} \cap \cE_{slc} = \phi.$$ 
Then for any $R>0$, there exists $C_R>0$ such that for any $t\in B$, 
$$\sup_{B_{g_t}(p_t, R)} |\varphi_t|  \leq C_R. $$

\end{lemma}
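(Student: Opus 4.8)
The plan is to exploit the Gromov-Hausdorff convergence theorem quoted above together with the locally uniform smooth estimates for the K\"ahler-Einstein potentials on the regular locus (Lemma \ref{limi42}). The key point is that, on the smooth part, the K\"ahler-Einstein metrics $g_t$ converge smoothly and the potentials $\varphi_t$ converge in $C^k_{loc}$, so a metric ball $B_{g_t}(p_t,R)$ of fixed radius around a point that stays a definite distance away from $\cE_{slc}$ must remain inside a fixed compact subset of $\cX^{\textnormal{reg}}$, uniformly in $t$. Once that containment is established, the uniform $C^0$-bound for $\varphi_t$ on compact subsets of $\cX^{\textnormal{reg}}$ (which follows from Lemma \ref{limi42} and its proof, or already from Lemma 5.2 of \cite{SSW}) gives the desired estimate $\sup_{B_{g_t}(p_t,R)}|\varphi_t|\leq C_R$.

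More precisely, first I would argue by contradiction: if the lemma fails, there is $R>0$, a sequence $t_j\to t_\infty\in B$, and points $q_j\in B_{g_{t_j}}(p_{t_j},R)$ with $|\varphi_{t_j}(q_j)|\to\infty$. Since $\varphi$ is uniformly bounded on compact subsets of $\cX^{\textnormal{reg}}$, the points $q_j$ must escape every such compact set, i.e. $q_j$ converges (after passing to a subsequence) to a point of $\cX_{t_\infty}\setminus\cX_{t_\infty}^{\textnormal{reg}}$, and in fact, since $\varphi_{t_\infty}$ is bounded away from $\cE_{slc}$, to a point of $\cE_{slc}$ — or $q_j$ leaves every compact set of $\cX$ altogether, but that is excluded because $B_{g_{t_j}}(p_{t_j},R)$ sits over a neighborhood of $t_\infty$ in $B$ and $\pi$ is proper. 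Now the distance $d_{g_{t_j}}(p_{t_j},q_j)\leq R$, while by the convergence theorem $(\cX_{t_j},g_{t_j})$ converges in pointed Gromov-Hausdorff topology (based at $p_{t_j}$) to the metric completion of $(\cX_{t_\infty}^\circ,g_{t_\infty})$, whose added boundary points are exactly $\cE_{slc}$ and which sits at \emph{infinite} distance from $p_{t_\infty}$: the hypothesis $\{p_t\}\cap\cE_{slc}=\emptyset$ and the fact that $g_{t_\infty}$ is a \emph{complete} K\"ahler-Einstein metric on $\cX_{t_\infty}\setminus\cE_{slc}$ (this is part of the content of the quoted theorem — the metric completion is homeomorphic to $\cX_{t_\infty}\setminus\cE_{slc}$) force $d_{g_{t_\infty}}(p_{t_\infty},\cE_{slc})=\infty$. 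Hence the limit point of $q_j$ cannot lie within distance $R$ of $p_{t_\infty}$, a contradiction.

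Alternatively, one can run the argument directly without contradiction: choose a neighborhood $W$ of $p_{t_\infty}$ with compact closure in $\cX^{\textnormal{reg}}$, note that by smooth convergence of the metrics the $g_{t_j}$-distance from $p_{t_j}$ to $\partial W\cap\cX_{t_j}$ is bounded below by some $\rho>0$ uniformly; iterating (covering a neighborhood of the relevant portion of $\cX_{t_\infty}\setminus\cE_{slc}$ by finitely many such charts and using completeness of $g_{t_\infty}$ on the $R$-ball, which is relatively compact in $\cX_{t_\infty}\setminus\cE_{slc}$) shows $B_{g_{t_j}}(p_{t_j},R)$ is contained in a fixed compact $K\subset\cX^{\textnormal{reg}}$ for $j$ large, and then apply the uniform bound $\sup_{K\cap\cX_t}|\varphi_t|\leq C_K$. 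The main obstacle is the quantitative comparison between $g_t$-distance and the ambient geometry: one needs that a $g_t$-ball of fixed radius cannot ``reach'' the singular/non-klt locus uniformly in $t$, and this is precisely where the uniform completeness coming from the pointed Gromov-Hausdorff convergence theorem is essential — without it, a $g_t$-geodesic of length $R$ could conceivably run off toward $\cE_{slc}$ as $t\to t_\infty$. Everything else (the uniform $C^0$-estimate on compacta, properness of $\pi$) is already available from \cite{SSW} and the earlier sections.
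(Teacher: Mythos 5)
Your setup (argue by contradiction, use non-collapsing at the basepoints and the pointed Gromov--Hausdorff convergence theorem) is the same as the paper's, but the core of the proof is missing. The decisive step in your argument is the claim that if $|\varphi_{t_j}(q_j)|\to\infty$ with $q_j\in B_{g_{t_j}}(p_{t_j},R)$, then $q_j$ must converge (ambiently) to a point of $\cE_{slc}$. Escaping every compact subset of $\cX^{\textnormal{reg}}$ only forces the ambient limit into the singular set of $\cX_{t_\infty}$, which in general contains klt (log terminal) points that are \emph{not} in $\cE_{slc}$; and your justification for excluding these --- ``since $\varphi_{t_\infty}$ is bounded away from $\cE_{slc}$'' --- is a statement about the single fiber $\cX_{t_\infty}$ and says nothing about $\varphi_{t_j}$ on nearby fibers near such a point, where Lemma \ref{limi42} (smooth convergence on $\cX^{\textnormal{reg}}$ only) gives no information. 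This is precisely the hard case, and it is the one the paper's proof is built to handle: it takes the blow-up points $x_j$, which by the Gromov--Hausdorff theorem subconverge to a point $x_\infty\in\cX_{t_\infty}\setminus\cE_{slc}$ (possibly a klt singular point), and then uses the local $L^2$-estimate and partial $C^0$-estimate of \cite{SSW} to produce pluricanonical sections $\sigma_j\in H^0(\cX_{t_j},mK_{\cX_{t_j}})$ with unit $L^2$-norm, uniform gradient bounds, and a uniform positive lower bound at $x_j$; writing $|\sigma_j|^2_{h_{t_j}^m}$ in a fixed projective embedding as a bounded ambient expression times $e^{-m\varphi_{t_j}}$ then yields a uniform lower bound for $\varphi_{t_j}$ near $x_j$, contradicting $\varphi_{t_j}(x_j)<-A$. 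Nothing in your proposal replaces this ingredient; your alternative ``direct'' argument has the same defect, since product charts and smooth convergence are again only available on $\cX^{\textnormal{reg}}$, not near the klt points of $\cX_{t_\infty}\setminus\cE_{slc}$.

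A secondary problem is the final contradiction itself: you compare an \emph{ambient} limit ($q_j\to\cE_{slc}$ in $\cX$) with the \emph{metric} statement $d_{g_{t_j}}(p_{t_j},q_j)\le R$ and the identification of the Gromov--Hausdorff limit with $\cX_{t_\infty}\setminus\cE_{slc}$. To get a contradiction you need these two notions of convergence to be compatible, i.e.\ that ambient neighborhoods of $\cE_{slc}$ are uniformly at large $g_t$-distance from $p_t$ for $t$ near $t_\infty$; this is not contained in the convergence theorem as quoted (and in \cite{SSW} such compatibility is again a consequence of the partial $C^0$-estimate machinery). Relatedly, the assertion that $g_{t_\infty}$ is a complete K\"ahler--Einstein metric on $\cX_{t_\infty}\setminus\cE_{slc}$ is imprecise: $g_{t_\infty}$ is smooth only on $\cX_{t_\infty}^\circ$, and completeness refers to the metric completion, which merely happens to be homeomorphic to $\cX_{t_\infty}\setminus\cE_{slc}$. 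So while the skeleton of your argument is sound, the lemma cannot be obtained from Lemma \ref{limi42} and the convergence theorem alone; the pluricanonical-section (partial $C^0$) argument is the essential missing step.
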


\begin{proof} We prove by contradiction. By assumption, geodesic unit balls centered at $p_t$ are uniformly non-collapsed, i.e., there exists $c>0$ such that for all $t\in B^\circ$, 
$$Vol_{g_t}(B_{g_t}(p_t, 1)) \geq c. $$

Suppose there exist a sequence $t_j \in B^\circ$ with $t_j \rightarrow t_\infty \in B$ and a sequence $x_j \in \cX_{t_j}$ with $x_j \rightarrow x_\infty \in \cX_{t_\infty} \setminus \cE_{slc}$ in Gromov-Hausdorff distance such that for any $A>0$, there exists $J>0$ so that for all $j \geq J$, we have
$$\varphi_{t_j} (x_j)< - A.$$
We can assume that for some fixed sufficiently small $r>0$, $B_{g_{t_j}}(x_j, r)$ converges in Gromov-Hausdorff topology to $B_{g_{t_\infty}}(x_\infty, r)$. By the local $L^2$-estimate and partial $C^0$-estimate in \cite{SSW}, there exists a sequence of $\sigma_j \in H^0(\cX_{t_j}, mK_{\cX_{t_j}})$ for some fixed $m>>1$ satisfying the following.
\begin{enumerate}

\item For each $j$, $$\int_{\cX_{t_j}}|\sigma_j|^2_{h_{t_j}^m} dV_{g_{t_j}} = 1, $$
where $h_t = (\omega_t^n)^{-1}$ is the hermitian metric on the relative canonical bundle $K_{\cX/ B}$ induced by the K\"ahler-Einstein metrics on the fibres. 

\item There exists $C>0$ such that for each $j$, $$\sup_{B_{g_{t_j}}(x_j, r)} |\nabla \sigma_j|^2_{h_{t_j}^m, g_{t_j}} \leq C.$$

\item There exists $c>0$ such that for each $j$, $$|\sigma_j|^2_{h_{t_j}^m} (x_j) \geq c.$$

\end{enumerate}

By passing to a subsequence, we can assume that $\sigma_j$ converges to an $L^2$-integrable pluricanonical section $\sigma_\infty$ on $B_{g_{t_\infty}}(x_\infty, r)$.  Since $h_{t_j}$ converges smoothly to $h_{t_\infty}$ on the smooth part of $\cX_{t_\infty}$,   by the uniform gradient estimate, $|\sigma_\infty|^2_{h_{t_\infty}^m}$ extends to a continuous function on $B_{g_{t_\infty}}(x_\infty, r)$ and there exist $\epsilon>0$ and  $0<2\delta<r$ such that 
$$\inf_{B_{g_{t_\infty}}(x_\infty, 2\delta)} |\sigma_\infty|^2_{h_{t_\infty}^m} > \epsilon. $$
We fix a projective embedding $\Phi: \cX \rightarrow \mathbb{CP}^{N_m}$ by $mK_{\cX/ U}$ for an affine neighborhood $U$ of $t_\infty$ in $B$ such that  $\chi= m^{-1} \ddbar \log ( 1+\sum_{i=1}^{N_m} |z_i|^2  )$ in an affine and bounded open set of $\mathbb{CP}^{N_m}$ containing $B_{g_{t_\infty}}(x_\infty, r)$ and $B_{g_{t_j}}(x_j, r)$ for all $j$.  Then there exist $a_{j, i}\in \mathbb{C}$ and $C>0$ such that $$\sigma_j = a_{j, 0}+ \sum_{i=1}^{N_m} a_{j, i} z_i$$ and for all $i$, $j$, we have 
$$|a_{i, j}| \leq C$$
and
$$\lim_{j\rightarrow \infty} a_{j, i} = a_{\infty, i}.$$
In particular, $a_{\infty, 0} \neq 0$. 
Since $$|\sigma_j|^2_{h_{t_j}^m} = \left( \frac{a_{j, 0} + \sum_{i=1}^{N_m} a_{j, i} |z_i|^2}{ 1+\sum_{i=1}^{N_m} |z_i|^2} \right) e^{-m\varphi_{t_j}} $$
is uniformly bounded above for in $B_{g_{t_j}}(x_j, r)$, $\varphi_{t_j}$ must be uniformly bounded from below in $B_{g_{t_j}}(x_j, r)$. This leads to contradiction. 

\end{proof}

The proof of Lemma \ref{0bound81} already implies the continuity of $\varphi$. However, we include the following two estimates as first and second order control of $\varphi_t$.

\begin{lemma} \label{2ndest} For any relative compact  $K \subset\subset \cX \setminus \cE_{scl}$, there exists $C_K >0$ such that
\begin{equation}
\sup_K tr_{g_t}(\chi) \leq C_K. 
\end{equation}

\end{lemma}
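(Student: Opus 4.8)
\textbf{Proof strategy for Lemma \ref{2ndest}.} The plan is to run a Schwarz-lemma / Aubin--Yau second-order estimate for the identity map from $(\cX_t, g_t)$ (the fiberwise K\"ahler--Einstein metric) to the fixed background metric $\chi_t$, and to make the resulting bound uniform in $t$ by exploiting the convergence results already established. Recall that $g_t$ has $\Ric(g_t) = -g_t$ on $\cX_t^{\reg}$, while $\chi_t = \chi|_{\cX_t}$ is a fixed smooth metric (coming from the pluricanonical embedding) with holomorphic bisectional curvature bounded above by some constant depending only on $\chi$ and hence uniform in $t$. The classical Schwarz-type computation of Yau (as refined by Royden) for the quantity $u_t = \tr_{g_t}(\chi_t)$ gives a differential inequality of the form
\begin{equation}
\Delta_{g_t} \log u_t \ \geq \ -C_1 u_t \ - \ C_2
\end{equation}
on $\cX_t^{\reg}$, where $C_1, C_2$ depend only on the curvature bound of $\chi$ (the $-C_1 u_t$ term absorbs the upper bound on the bisectional curvature of $\chi$, and the $-C_2$ term comes from $\Ric(g_t) = -g_t$).

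First I would combine this with the function $\varphi_t$ itself: since $\chi_t + \ddbar\varphi_t = \o_t$, taking traces with respect to $g_t$ gives $\tr_{g_t}(\chi_t) + \Delta_{g_t}\varphi_t = n$, so $u_t = n - \Delta_{g_t}\varphi_t$. Consider the auxiliary function $w_t = \log u_t - A\varphi_t$ for a large constant $A = C_1 + 1$. Then
\begin{equation}
\Delta_{g_t} w_t \ \geq \ -C_1 u_t - C_2 - A(n - u_t) \ = \ (A - C_1)u_t - C_2 - An \ \geq \ u_t - C_3,
\end{equation}
so at an interior maximum of $w_t$ over a suitable region we get $u_t \leq C_3$ there, and away from the maximum $u_t$ is controlled by $w_t$ plus $A\varphi_t$. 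To localize this on a relatively compact $K \ssubset \cX \setminus \cE_{slc}$ I would use a cutoff: pick slightly larger relatively compact sets $K \ssubset K' \ssubset \cX\setminus\cE_{slc}$ and a cutoff $\beta$ supported in $K'\cap\cX_t$, equal to $1$ on $K\cap\cX_t$, and apply the maximum principle to $\beta^2 e^{w_t}$ (or $\beta u_t e^{-A\varphi_t}$), using that $\beta$ has $g_t$-gradient and Hessian controlled uniformly — here one needs that $g_t$ is uniformly comparable to a fixed metric on $K'$, which is precisely the content of Lemma \ref{limi42} (smooth convergence of $\varphi_t$, hence of $g_t = \chi_t + \ddbar\varphi_t$, on compact subsets of $\cX^{\reg}$) together with the compactness of $B$. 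The other input needed is the uniform bound $|\varphi_t| \leq C_{K'}$ on $K'$, which is exactly Lemma \ref{0bound81} (applied over a finite cover of $K'$ by continuous local sections avoiding $\cE_{slc}$, or more directly from the continuity of $\varphi$ on $\cX^{\reg}$ established right after that lemma).

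The main obstacle I anticipate is the localization: $(\cX_t, g_t)$ is noncompact near $\cE_{slc}$, so the maximum principle for $u_t$ cannot be applied globally on the fiber, and near the singular/non-klt locus $u_t$ genuinely blows up. One must therefore carefully control the boundary contribution of the cutoff region $K'\setminus K$, which requires knowing a priori that $g_t$ is uniformly bounded below (non-collapsing) and above on $K'$ independently of $t$ — and this is where the Gromov--Hausdorff convergence theorem quoted above and the smooth convergence in Lemma \ref{limi42} do the essential work. Once those uniform two-sided bounds on $g_t|_{K'}$ and the uniform bound on $\varphi_t|_{K'}$ are in hand, the Schwarz-lemma argument with cutoff goes through routinely and yields $\sup_K \tr_{g_t}(\chi) = \sup_K u_t \leq C_K$ with $C_K$ independent of $t$. $\hfill\Box$
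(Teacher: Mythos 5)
Your Schwarz-lemma quantity $\log \tr_{g_t}(\chi_t) - A\varphi_t$, the differential inequality $\Delta_{g_t}\log\tr_{g_t}(\chi_t)\geq -C_1\tr_{g_t}(\chi_t)-C_2$, and the use of Lemma \ref{0bound81} for the uniform bound on $\varphi_t$ are exactly the ingredients of the paper's argument. The gap is in your localization. You cut off with a fixed function $\beta$ supported in a fixed compact set $K'\ssubset \cX\setminus\cE_{slc}$, and to run the maximum principle you need $|\nabla\beta|^2_{g_t}$ and $\Delta_{g_t}\beta$ bounded uniformly in $t$. But $|\nabla\beta|^2_{g_t}\leq \tr_{g_t}(\chi)\,|\nabla\beta|^2_{\chi}$ is the only available bound, so uniform control of the cutoff's $g_t$-derivatives on the transition region is essentially the estimate $\tr_{g_t}(\chi)\leq C$ that you are trying to prove: the argument is circular unless that region already carries a uniform lower bound $g_t\geq c\,\chi$. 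You propose to get this from Lemma \ref{limi42}, but that lemma gives smooth convergence only on compact subsets of $\cX^{\reg}$, whereas $K$ (and the annulus $K'\setminus K$) is only assumed disjoint from $\cE_{slc}$ and may meet klt singular points of special fibres — the case the lemma must cover (cf.\ the last assertion of Theorem \ref{main2}, where all fibres are log terminal and $\cE_{slc}=\emptyset$). Near such points Gromov--Hausdorff convergence does not give comparability of $g_t$ with a fixed background metric, and on compact subsets of $\cX^{\reg}$ the conclusion already follows trivially from smooth convergence, so your argument only works where the lemma has no content.

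The paper avoids this by localizing intrinsically: it fixes base points $p_t$ with non-collapsed unit balls, uses the $g_t$-distance $r_t$, for which $|\nabla r_t|_{g_t}=1$ and the Laplacian comparison $\Delta_t r_t\leq r_t^{-1}+1$ hold automatically from $\Ric(g_t)=-g_t$, and applies the maximum principle to $H_{t,R}=\phi_R\big(\log\tr_{g_t}(\chi_t)-3A\varphi_t\big)$ with $\phi_R=\rho_R(r_t)$ a cutoff on the geodesic ball $B_{g_t}(p_t,2R)$; Lemma \ref{0bound81} bounds $\varphi_t$ uniformly on that ball, and no a priori comparison of $g_t$ with $\chi$ is needed. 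A fixed compact $K\ssubset\cX\setminus\cE_{slc}$ is then contained in $B_{g_t}(p_t,R)$ for a uniform $R$ by the Gromov--Hausdorff convergence theorem quoted at the start of Section 8. To repair your proof, replace the fixed extrinsic cutoff by this geodesic-distance cutoff (Cheng--Yau style); the rest of your computation then goes through.
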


\begin{proof} We will pick base points $p_t$ for $t\in B^\circ$ and we can assume that there exists $c>0$ such that %
$$Vol(B_{g_t}(p_t, 1)) \geq c$$
for all $t\in B^\circ$. Let $r_t(x)$ be the distance function from $x$ to $p_t$ on $\cX_t$. Immediately we have 
$$ |\nabla r_t |_{g_t}=1, ~~ \Delta_t  r_t \leq ( r_t^{-1} + 1). $$
We define the cut-off function $\phi_{t, R} (x) = \rho (r_t(x))$ satisfying 
$$ \rho_R (r) = 1, ~ if ~ r \leq R, ~ \rho_R(r) =0, ~ if ~ r\geq 2R, $$
where smooth nonnegative decreasing function $\rho$ satisfies
$$\rho_R \geq 0, ~ 0 \leq \rho_R^{-1} (\rho_R')^2 \leq C R^{-2}, ~ |\rho_R''| \leq C R^{-2} $$
for a fixed constant  $C>0$. 

We let $$ H_{t, R} = \phi_R \left( \log tr_{g_t}(\chi_t) - 3A \varphi_t \right). $$
Straightforward calculations show that for some fixed and sufficiently large $A>>1$, we have 
\begin{eqnarray*}
&&\Delta_t H_{t, R}\\
&\geq& \phi_R \left( -3An - \frac{|\nabla tr_{g_t}(\chi_t)|^2}{tr_{g_t}(\chi_t)} + 2A tr_{g_t}(\chi_t) \right) - 2Re\left( \nabla \phi_R \cdot  \frac{\nabla tr_{g_t}(\chi_t)}{tr_{g_t}(\chi_t)} \right) \\
&&+ \left( \Delta_t \phi_R \right) \left( \log tr_{g_t}(\chi_t) - 3A \varphi_t \right)\\
&\geq & 2A \phi_R tr_{g_t}(\chi_t) - A\left( \phi_R tr_{g_t}(\chi_t) \right)^{-1} - 4An - A\log tr_{g_t}(\chi_t). 
\end{eqnarray*}
Since  $xlogx \leq e$ for $x>0$ and $\varphi_t$ is uniformly bounded in the support of $\phi_R$ for all $t$, we can apply the maximum principle and conclude that $H_{t, R}$ is uniformly bounded for all $t\in B^\circ$ for any fixed $R\geq 1$. The lemma then immediately follows.

\end{proof}

We remark that Lemma \ref{2ndest} can also be used to prove continuity of the Weil-Petersson potentials in section 6.

\begin{proposition} \label{gradest} For any relatively compact  $K \subset\subset \cX \setminus \cE_{scl}$, there exists $C_K >0$ such that
\begin{equation}
\sup_K |\nabla \varphi_t|_{g_t} \leq C_K. 
\end{equation}

\end{proposition}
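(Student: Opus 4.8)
The plan is to establish the gradient estimate by a Bochner-type argument, following the same strategy already used for the second-order estimate in Lemma \ref{2ndest}. First I would observe that on a compact $K\ssubset\cX\setminus\cE_{slc}$, Lemma \ref{0bound81} gives a uniform $C^0$ bound $|\varphi_t|\leq C_K'$ and Lemma \ref{2ndest} gives a uniform bound $\tr_{g_t}(\chi_t)\leq C_K''$, so that the metrics $g_t$ are uniformly equivalent to the fixed background metric $\chi_t$ on a slightly larger compact set $K'$ with $K\ssubset K'\ssubset\cX\setminus\cE_{slc}$. Hence it suffices to bound $|\nabla\varphi_t|_{g_t}$, which by the metric equivalence is comparable to the Euclidean gradient in local coordinates. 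I would pick base points $p_t$ with uniformly non-collapsed unit balls as in Lemma \ref{2ndest}, use the same distance cut-off functions $\phi_R=\rho_R(r_t)$, and work with the quantity
$$
H_{t,R} \;=\; \phi_R\left( |\nabla\varphi_t|_{g_t}^2\, e^{-A\varphi_t} \right)
$$
for a large constant $A$ to be fixed.

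The key computation is the Weitzenböck/Bochner formula for $|\nabla\varphi_t|_{g_t}^2$. Using the Kähler-Einstein equation $\Ric(g_t)=-g_t$ and the normalization $\Delta_t\varphi_t = n - \tr_{g_t}(\chi_t)$ coming from $(\chi_t+\ddbar\varphi_t)^n = e^{\varphi_t}\O_t$, the standard identity gives
$$
\Delta_t |\nabla\varphi_t|_{g_t}^2 \;\geq\; |\nabla\nabla\varphi_t|^2 + |\nabla\overline\nabla\varphi_t|^2 \;+\; 2\,\Re\langle \nabla\varphi_t, \nabla\Delta_t\varphi_t\rangle \;-\; 2|\nabla\varphi_t|_{g_t}^2,
$$
and $\nabla\Delta_t\varphi_t = -\nabla\tr_{g_t}(\chi_t)$ is controlled in terms of the second-order quantities already bounded by Lemma \ref{2ndest} and its proof (indeed one gets a gradient bound on $\tr_{g_t}(\chi_t)$ from a further differentiation, or one can instead absorb it). The role of the factor $e^{-A\varphi_t}$ is standard: differentiating produces a good term $A|\nabla\varphi_t|_{g_t}^2\cdot(\tr_{g_t}\chi_t - n)\cdot e^{-A\varphi_t}$ plus cross terms, and since $|\varphi_t|$ and $\tr_{g_t}\chi_t$ are uniformly bounded on $\mathrm{supp}\,\phi_R$, choosing $A$ large makes the full expression $\Delta_t H_{t,R}$ bounded below by $c\,\phi_R|\nabla\varphi_t|_{g_t}^2\cdot(\text{something positive}) - C$ modulo the cut-off error terms $|\nabla\phi_R|, |\Delta_t\phi_R|$ which are $O(R^{-2})$ by the choice of $\rho_R$ and the Laplacian comparison $\Delta_t r_t \leq r_t^{-1}+1$ (valid since $\Ric(g_t)\geq -g_t$).

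Then I would apply the maximum principle to $H_{t,R}$ at its interior maximum point: at that point the bad terms are controlled, one obtains a uniform pointwise bound on $H_{t,R}$, hence on $|\nabla\varphi_t|_{g_t}^2$ over the region $\{\phi_R=1\}\supset K$, uniformly in $t\in B^\circ$. Taking $R$ fixed large enough that $K\subset\{r_t\leq R\}$ for all $t$ (possible by Lemma \ref{0bound81} since $K$ stays a bounded distance from the base points) completes the argument. The main obstacle I anticipate is controlling $\nabla\Delta_t\varphi_t = -\nabla\tr_{g_t}(\chi_t)$: one needs either a first-order estimate on $\tr_{g_t}(\chi_t)$ (which follows from a second application of the same cut-off Bochner technique, now using Lemma \ref{2ndest} as the $C^0$ input), or a more clever combined test function involving both $|\nabla\varphi_t|_{g_t}^2$ and $\log\tr_{g_t}(\chi_t)$ so that the troublesome gradient-of-trace terms cancel against the good quadratic term coming from the Bochner formula for the trace — exactly the kind of Calabi--Yau third-order trick. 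Either route is routine once the uniform metric equivalence on $K'$ and the non-collapsing are in hand, so the proposition follows.
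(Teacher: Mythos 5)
Your plan is essentially the paper's proof: the paper likewise localizes with the cut-off $\phi_R$ on geodesic balls about uniformly non-collapsed base points, applies the Bochner formula to $|\nabla\varphi_t|^2_{g_t}$, and disposes of the third-order cross term $2\Re\left(\nabla\tr_{g_t}(\chi_t)\cdot\nabla\varphi_t\right)_{g_t}$ via the combined test function $H_{t,R}=\phi_R\bigl(|\nabla\varphi_t|^2_{g_t}/(A_R-\varphi_t)+B\,\tr_{g_t}(\chi_t)\bigr)$, using the Chern--Lu type inequality $\Delta_t\tr_{g_t}(\chi_t)\ge|\nabla\tr_{g_t}(\chi_t)|^2_{g_t}-A_R'$ (available since $\tr_{g_t}(\chi_t)$ is already uniformly bounded away from $\cE_{slc}$) to absorb it --- precisely the second of the two routes you sketch. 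The only cosmetic difference is your exponential weight $e^{-A\varphi_t}$ in place of the paper's $(A_R-\varphi_t)^{-1}$, which does not change the maximum-principle argument in any essential way.
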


\begin{proof} The proposition can be proved by the partial $C^0$-estimate and $L^2$-estimate. We give an alternative proof using the maximum principle.  Let $\Delta_t$ be the Laplace operator of $g_t$ on $\cX_t$, where $t\in B^\circ$. Straightforward calculations show that
\begin{eqnarray*}
&&\Delta_t \left( |\nabla\varphi_t|^2_{g_t} \right) \\
&=&- |\nabla\varphi_t|^2_{g_t}+ |\nabla^2 \varphi_t|^2_{g_t} + |\nabla\overline{\nabla}\varphi_t|^2_{g_t} + 2Re \left( \nabla(tr_{g_t}(\chi_t) \cdot \nabla \varphi_t \right)_{g_t}.
\end{eqnarray*}

We will pick base points $p_t$ for $t\in B^\circ$  and the cute-off function $\phi_R$ as in the proof of Lemma \ref{2ndest}.   By the $C^0$-estimate of $\varphi_t$, there exists $A_R>1$ such that
$$2||\varphi_t||_{L^\infty(B_{g_t}(p_t, 2R)} \leq A_R$$
for all $t\in B^\circ$. In $B_{g_t}(p_t, 2R)$, we have
\begin{eqnarray*}
&&\Delta_t \left( \frac{|\nabla\varphi_t|^2_{g_t}}{A - \varphi_t} \right) \\
&=&\frac{ \Delta_t \left( |\nabla  \varphi_t|^2\right)}{A_R - \varphi_t} + \frac{ |\nabla\varphi_t|^2_{g_t} \left( \Delta_t \varphi_t \right) }{(A_R-\varphi_t)^2} + 2Re \left( \nabla |\nabla \varphi_t|^2 \cdot \frac{\nabla \varphi_t}{(A_R- \varphi_t)^2} \right)_{g_t}+ 2\frac{|\nabla\varphi_t|^4_{g_t}}{(A - \varphi_t)^3}\\
&=&\frac{- |\nabla\varphi_t|^2_{g_t}+ |\nabla^2 \varphi_t|^2_{g_t} + |\nabla\overline{\nabla}\varphi_t|^2_{g_t} + 2Re \left( \nabla(tr_{g_t}(\chi_t) \cdot \nabla \varphi_t \right)_{g_t}}{A_R - \varphi_t} \\
&&+ \frac{ |\nabla\varphi_t|^2_{g_t} \left( n - tr_{g_t}(\chi_t) \right) }{(A_R-\varphi_t)^2} + 2Re \left( \nabla |\nabla \varphi_t|^2 \cdot \frac{\nabla \varphi_t}{(A_R- \varphi_t)^2} \right)_{g_t} + 2\frac{|\nabla\varphi_t|^4_{g_t}}{(A - \varphi_t)^3}.
\end{eqnarray*}
Also we have the following calculations because the Fubini-Study metric $\chi_t$ in the ambient projective space has bounded curvature 
$$\Delta_t tr_{g_t}(\chi_t) \geq |\nabla tr_{g_t}(\chi_t)|^2_{g_t} - tr_{g_t}(\chi_t) - C \left(tr_{g_t}(\chi_t)\right)^2.$$
By the result in \cite{SSW}, $tr_{g_t}(\chi_t)$ is uniformly bounded away from $\cE_{slc}$ and so there exists $A'_R>0$ such that in $A'_{g_t}(p_t, 2R)$, we have
$$\Delta_t tr_{g_t}(\chi_t) \geq   |\nabla tr_{g_t}(\chi_t)|^2_{g_t} - A'_R.$$

Now we let $$H_{t, R} = \phi_R \left( \frac{|\nabla\varphi_t |^2_{g_t}} {A_R - \varphi_t} + B\ tr_{g_t} (\chi_t) \right), ~K_{t, R} =   \left( \frac{|\nabla\varphi_t |^2_{g_t}} {A_R - \varphi_t} +  B\ tr_{g_t} (\chi_t) \right)$$
for some sufficiently large $C>0$ to be determined. 
Then there exist $C_1, C_2, ..., C_5 >0$ such that on $\cX_t$ for all $t\in B^\circ$, we have
\begin{eqnarray*}
&&\Delta_t H_{t, R} \\
&\geq&  \left( \Delta_t \phi_R \right) K_{t, R} + 2Re\left( \nabla\phi_R \cdot \nabla \left(\frac{|\nabla\varphi_t|^2_{g_t}}{A - \varphi_t}\right) \right)_{g_t} + \phi_R  \Delta_t \left( \frac{|\nabla\varphi_t|^2_{g_t}}{A - \varphi_t}     + C   tr_{g_t}(\chi_t) \right) \\
&\geq& \left( \Delta_t \phi_R \right) K_{t, R} + 2 Re \left( \nabla \phi_R, \nabla K_{t, R} - B\ \nabla tr_{g_t}(\chi_t) \right)_{g_t} +\phi_R \frac{ |\nabla\varphi_t|^2_{g_t} \left( n - tr_{g_t}(\chi_t) \right) }{(A_R-\varphi_t)^2} \\
&& +\phi_R \left( \frac{  - |\nabla\varphi_t|^2_{g_t}+ |\nabla^2 \varphi_t|^2_{g_t} + |\nabla\overline{\nabla}\varphi_t|^2_{g_t} + 2Re \left( \nabla(tr_{g_t}(\chi_t) \cdot \nabla \varphi_t \right)_{g_t} }{A_R - \varphi_t}\right)\\
&&+(2-2\epsilon)\phi_R Re\left( \nabla K_{t, R} \cdot \frac{\nabla \varphi_t}{A-\varphi_t}   \right)_{g_t} + 2\epsilon \phi_R \frac{|\nabla\varphi_t|^4_{g_t}}{(A - \varphi_t)^3} + B\ \phi_R \Delta_t tr_{g_t}(\chi_t) \\
&\geq& (2-2\epsilon) Re\left( \nabla H_{t, R} \cdot \frac{\nabla \varphi_t}{A-\varphi_t}   \right)_{g_t}+ 2\epsilon \phi_R \frac{|\nabla\varphi_t|^4_{g_t}}{(A - \varphi_t)^3} - C_1\frac{|\nabla\varphi_t|^2_{g_t}}{A - \varphi_t}- C_2 \frac{|\nabla\varphi_t|^2_{g_t}}{(A - \varphi_t)^2} - C_3\\
&\geq& (2-2\epsilon) Re\left( \nabla H_{t, R} \cdot \frac{\nabla \varphi_t}{A-\varphi_t}   \right)_{g_t}+ 2\epsilon   \frac{H_{t, R}^2}{A - \varphi_t} - C_4\frac{H_{t, R}}{A - \varphi_t} - C_5.
\end{eqnarray*}
By applying the maximum principle, there exists $C_R>0$ such that for all $t\in B^\circ$, 
$$\sup_{\cX_t} H_{t, R} \leq C_R. $$
This completes the proof of the proposition.

\end{proof}

The following corollary is an immediate consequence of the uniform gradient estimate in Proposition \ref{gradest} as $\varphi_t$ is uniformly Lipschitz in any compact set $\cX_t \cap \cX^\circ$.  
\begin{corollary} $\varphi_t \in C^0(\cX \setminus \cE_{scl})$.

\end{corollary}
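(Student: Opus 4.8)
The plan is to bootstrap the continuity of $\varphi$ on the open dense set $\cX^{\textnormal{reg}}$, already established above, to continuity on the larger set $\cX\setminus\cE_{slc}$; the only genuinely new points are klt singular points of the fibres. Since $\cE_{slc}$ is closed in $\cX$ and $\cX^{\textnormal{reg}}$ is open and dense, it suffices to prove continuity of $\varphi$ at a fixed point $x_\infty\in\cX_{t_\infty}$ that is a klt singular point of its fibre, i.e. $x_\infty\in(\cX\setminus\cE_{slc})\setminus\cX^{\textnormal{reg}}$. First I would fix a section $p_t$ of $\pi$ through a point of $\cX\setminus\cE_{slc}$ near $x_\infty$, and record the two uniform estimates available near $x_\infty$: Lemma \ref{0bound81} gives, for each fixed $R$, a bound $\sup_{B_{g_t}(p_t,R)}|\varphi_t|\le C_R$ independent of $t$ (and by the Gromov--Hausdorff convergence below these balls eventually cover a fixed ambient neighbourhood of $x_\infty$, so $\varphi_t$ is uniformly bounded there), while Proposition \ref{gradest} gives a uniform Lipschitz bound $|\nabla\varphi_t|_{g_t}\le C_K$ on any fixed compact $K\ssubset\cX\setminus\cE_{slc}$; Lemma \ref{2ndest} will be used along the way to compare $g_t$ with the ambient form $\chi$.

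Next I would invoke the Gromov--Hausdorff convergence theorem recalled at the start of this section: for $t_j\to t_\infty$ with $t_j\in B^\circ$, the pointed spaces $(\cX_{t_j},g_{t_j},p_{t_j})$ converge to the metric completion of $(\cX_{t_\infty}^{\textnormal{reg}},g_{t_\infty})$, which is homeomorphic to $\cX_{t_\infty}\setminus\cE_{slc}$, the convergence being smooth on the smooth part. Because the $\varphi_{t_j}$ are equibounded and $g_{t_j}$-equi-Lipschitz near $p_{t_j}$, an Arzel\`a--Ascoli argument on these converging spaces produces, along a subsequence, a Lipschitz limit $\varphi_\infty$ on compact subsets of the limit space; by Lemma \ref{limi42} we have $\varphi_\infty=\varphi_{t_\infty}$ on $\cX_{t_\infty}^{\textnormal{reg}}$, and since both functions are continuous and $\cX_{t_\infty}^{\textnormal{reg}}$ is dense in $\cX_{t_\infty}\setminus\cE_{slc}$ — where $\varphi_{t_\infty}$ is continuous by \cite{S} — they agree everywhere. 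Uniqueness of the limit then upgrades this to $\varphi_{t_j}\to\varphi_{t_\infty}$ uniformly on compact subsets of $\cX_{t_\infty}\setminus\cE_{slc}$, with no subsequence needed.

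To finish, given an arbitrary sequence $x_j\to x_\infty$ in $\cX$, I would use that $\cE_{slc}$ is closed, that $\pi^{-1}(B^\circ)$ is dense in $\cX$, and that each $\varphi_t$ is continuous on $\cX_t\setminus\cE_{slc}$ (\cite{S}) to replace $x_j$ by a nearby point lying over $B^\circ$ carrying almost the same value of $\varphi$, thus reducing to $x_j\in\cX_{t_j}$ with $t_j\in B^\circ$, $t_j\to t_\infty$; then, under the homeomorphism identifying the Gromov--Hausdorff limit with $\cX_{t_\infty}\setminus\cE_{slc}$, one has $x_j\to x_\infty$, so combining the uniform convergence $\varphi_{t_j}\to\varphi_{t_\infty}$ with the continuity of $\varphi_{t_\infty}$ gives $\varphi(x_j)=\varphi_{t_j}(x_j)\to\varphi_{t_\infty}(x_\infty)=\varphi(x_\infty)$, which is the desired continuity. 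The main obstacle is exactly the passage through the Gromov--Hausdorff limit in the second step: one must check carefully that the $g_t$-Lipschitz bound of Proposition \ref{gradest} transfers to an Arzel\`a--Ascoli compactness statement on the limit space and that the limit topology agrees with the complex-analytic topology near the klt singular points — both being consequences of the convergence theorem above and Lemma \ref{limi42}, but needing a precise statement.
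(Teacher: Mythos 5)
Your argument is correct and is essentially the paper's own: the paper deduces the corollary immediately from the uniform Lipschitz bound of Proposition \ref{gradest} (together with the uniform bound of Lemma \ref{0bound81} and the convergence on $\cX^{\textnormal{reg}}$ from Lemma \ref{limi42}), which is exactly the equicontinuity-plus-identification-of-limits scheme you spell out. Your write-up just makes explicit the Arzel\`a--Ascoli step and the compatibility of the Gromov--Hausdorff and analytic topologies that the paper leaves implicit.
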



\begin{thebibliography}{99}



\bibitem{AK} Abramovich, D. and Karu, K. {\em Weak semistable reduction in characteristic 0},  Invent. Math. 139 (2000), no. 2, 241--273


\bibitem{ALT} Adiprasito, K., Liu, G. and Temkin, M. {\em
Semistable reduction in characteristic $0$}, arXiv:1810.03131

\bibitem{Au} Aubin, T.  {\em \'Equations du type Monge-Amp\`ere sur les vari\'et\'es k\"ahl\'eriennes compactes},  Bull. Sci. Math. (2) {\bf 102} (1978), no. 1, 63--95



\bibitem{BG} Berman, R. and Guenancia, H. {\em K\"ahler-Einstein metrics on stable varieties and log canonical pairs}, preprint arXiv:1304.2087

\bibitem{BDMP} Bierstone, E., Da Silva, S., Milman, P. and Pacheco, V. {\em Desingularization by blowings-up avoiding simple normal crossings}, 
Proc. of the AMS,  {\bf 142},  no. 12, (2014), 4099--4111

\bibitem{D}Donaldson, S. K.,
``Scalar curvature and stability of toric varieties",
J. Diff. Geom. 62 (2002), no. 2, 289--349.

 \bibitem{DM}  Deligne, P. and  Mumford, D. {\em  The irreducibility of the space of curves of given genus},  Inst.
Hautes Etudes Sci. Publ. Math., (36): 75--109, 1969

\bibitem{Fu} Fujino, O. {\em Semipositivity theorems for moduli problems.}  Ann. of Math. (2) {\bf 187} (2018), no. 3, 639--665

\bibitem{GZ1}Guedj, V. and Zeriahi, A. {\it Intrinsic capacities on compact K\"ahler manifolds}, J. Geom. Anal. 15 (2005), no. 4, 607--639 


\bibitem{K1}  Koll\'ar, J. {\em Moduli of varieties of general},  Handbook of Moduli: Volume II, Adv. Lect. Math. (ALM), vol. 24, Int. Press, Somerville, MA, 2013, pp. 115--130


\bibitem{K2} Koll\'ar, J. {\em Families of varieties of general type}, https://web.math.princeton.edu/~kollar/book/modbook20170720.pdf

\bibitem{Ko} Kolodziej, S. {\em The complex Monge-Amp\`ere
equation}, Acta Math. {\bf 180} (1998), no. 1, 69--17

\bibitem{M} Masur, H. {\em
Extension of the Weil-Petersson metric to the boundary of Teichmuller space}, 
Duke Math. J. 43 (1976), no. 3, 623--635

\bibitem{MA}  Moriwaki, A. {\it The continuity of Deligne's pairing},  Internat. Math. Res. Notices 1999, no. 19, 1057--1066


\bibitem{DGG} Di Nezza, E., Guedj, V. and  Guenancia, H. {\em Families of singular K\"ahler-Einstein metrics}, arXiv:2003.08178

\bibitem{PX}  Patakfalvi, Z. and  Xu, C. {\em Ampleness of the CM line bundle on the moduli space of canonically polarized varieties},  Algebr. Geom. {\bf 4} (2017), no. 1, 29--39

\bibitem{Ru1} Ruan, W. {\em  Degeneration of K\"ahler-Einstein manifolds,  I. The normal crossing case}, Commun. Contemp. Math. 6 (2004), no. 2, 301--313


\bibitem{Sch} Schumacher, G. {\em Positivity of relative canonical bundles and applications}, Invent. Math. 190 (2012), no. 1, 1--56

\bibitem{Si} Siu, Y.-T.  {\em   Curvature of the Weil-Petersson metric in the moduli space of compact K\"ahler-Einstein manifolds of negative first Chern class},  Contri- butions to several complex variables, Hon. W. Stoll, Proc. Conf. Complex Analysis, Notre Dame/Indiana 1984, Aspects Math. E9, 261Ð298 (1986)

\bibitem{S} Song, J. {\em Degeneration of K\"ahler-Einstein manifolds of negative scalar curvature}, arXiv:1706.01518

\bibitem{SSW} Song, J., Sturm, J. and Wang, X. {\em Riemannian geometry of K\"ahler-Einstein currents III: compactness of K\"ahler-Einstein manifolds of negative scalar curvature}, arXiv:2003.04709

 \bibitem{T0} Tian, G. {\em     Degeneration of K\"ahler-Einstein manifolds I}, Proceedings of Symposia in
Pure Mathematics, Vol. 54, Part 2, 595--609

\bibitem{T1} Tian, G.,
``K\"ahler-Einstein metrics with positive scalar curvature",
Invent. Math. 130 (1997), no. 1, 1--37.

\bibitem{Ts}   Tsuji, H. {\em Dynamical construction of K\"ahler-Einstein metrics}, Nagoya Math. J. 199 (2010), 107--122


\bibitem{V} Vakil, R. {\em The rising sea, Foundations of Algebraic Geometry},  http://math.stanford.edu/~vakil/216blog/FOAGdec3014public.pdf



\bibitem{W1} Wolpert, S. {\em On the Weil-Petersson geometry of the moduli space of curves},  Amer. J. Math. 107 (1985), no. 4, 969--997

\bibitem{W2} Wolpert, S. {\em On obtaining a positive line bundle from the Weil-Petersson class}, 
Amer. J. Math. 107 (1985), no. 6, 1485--1507 (1986).


 \bibitem{Y}  Yau, S.T. {\em On the Ricci curvature of a compact  K\"{a}hler
manifold and complex Monge-Amp\`{e}re  equation I}, \rm  Comm. Pure Appl.  Math.  31  (1978),  339--411

\bibitem{Z}
 Zhang, S., ``Heights and reductions of semi-stable varieties", Compositio Math. 104
(1996) 77--05.
\end{thebibliography}
\end{document}